\numberwithin{equation}{section}
\numberwithin{figure}{section}
\theoremstyle{plain}
\newtheorem{thm}{\protect\theoremname}[section]
  \theoremstyle{definition}
  \theoremstyle{plain}
  \newtheorem{lem}[thm]{\protect\lemmaname}
  \theoremstyle{definition}
  \theoremstyle{definition}
  \newtheorem{remark}[thm]{\protect\remarkname}
  \theoremstyle{plain}
  \newtheorem{prop}[thm]{\protect\propositionname}
   \theoremstyle{plain}
   \theoremstyle{plain}
  \newtheorem{cor}[thm]{\protect\corollaryname}
  \theoremstyle{plain}
  \newtheorem*{thm*}{\protect\theoremname}
  \theoremstyle{plain}
  \newtheorem{claim}[thm]{\protect\claimname}
   \theoremstyle{plain}
  \newtheorem*{prop*}{\protect\propositionname}
  \newcounter{casectr}
  \newenvironment{caseenv}
  {\begin{list}{{\itshape\ \protect\casename} \arabic{casectr}.}{%
   \setlength{\leftmargin}{\labelwidth}
   \addtolength{\leftmargin}{\parskip}
   \setlength{\itemindent}{\listparindent}
   \setlength{\itemsep}{\medskipamount}
   \setlength{\topsep}{\itemsep}}
   \setcounter{casectr}{0}
   \usecounter{casectr}}
  {\end{list}}
  \providecommand{\claimname}{Claim}
  \providecommand{\definitionname}{Definition}
  \providecommand{\examplename}{Example}
  \providecommand{\lemmaname}{Lemma}
  \providecommand{\propositionname}{Proposition}
   \providecommand{\remarkname}{Remark}
  \providecommand{\corollaryname}{Corollary}
  \providecommand{\theoremname}{Theorem}
\providecommand{\theoremname}{Theorem}
\providecommand{\conjecturename}{Conjecture}
\begin{document}
\global\long\def\Inf{\mathrm{Inf}}
\global\long\def\Sym{\mathrm{Sym}}
\global\long\def\Per{\mathrm{Per}}
\global\long\def\f{\mathcal{F}}
\global\long\def\l{\mathcal{L}}
\global\long\def\pn{\mathcal{P}\left(\left[n\right]\right)}
\global\long\def\g{\mathcal{G}}
\global\long\def\s{\mathcal{S}}
\global\long\def\m{\mathcal{M}}
\global\long\def\mh{\mu_{\frac{1}{2}}}
\global\long\def\mp{\mu_{p}}
\global\long\def\j{\mathcal{J}}
\global\long\def\d{\mathcal{D}}
\global\long\def\p{\mathcal{P}}
\global\long\def\mpo{\mu_{p_{0}}}
\global\long\def\fpp{f_{p}^{p_{0}}}
\global\long\def\llll{\l_{\mu}}
\global\long\def\h{\mathcal{H}}
\global\long\def\n{\mathbb{N}}
\global\long\def\a{\mathcal{A}}
\global\long\def\b{\mathcal{B}}
\global\long\def\sf{f_{2}}
\global\long\def\bin{\mathrm{Bin}}
\global\long\def\C{C_{2}}

\title{On a Biased Edge Isoperimetric Inequality for the Discrete Cube}

\author{David Ellis}
\address{David Ellis, School of Mathematical Sciences, Queen Mary, University of London, Mile End Road, London E1 4NS, UK.}
\email{d.ellis@qmul.ac.uk}
\author{Nathan Keller}
\address{Nathan Keller, Department of Mathematics, Bar Ilan University, Ramat Gan 5290002, Israel.}
\email{nathan.keller27@gmail.com}
\author{Noam Lifshitz}
\address{Noam Lifshitz, Department of Mathematics, Bar Ilan University, Ramat Gan 5290002, Israel.}
\email{noamlifshitz@gmail.com}
\thanks{The research of N.K. was supported by the Israel Science Foundation (grant no.
402/13), the Binational US-Israel Science Foundation (grant no. 2014290), and by the Alon Fellowship.}
\date{3rd February 2017}

\begin{abstract}
The `full' edge isoperimetric inequality for the discrete cube $\{0,1\}^n$ (due to Harper, Lindsey, Berstein and Hart) specifies the minimum size of the edge boundary $\partial A$ of a set $A \subset \{0,1\}^n$, as function of $|A|$. A weaker (but more widely-used) lower bound is $|\partial A| \geq |A| \log(2^n/|A|)$, where equality holds whenever $A$ is a subcube. In 2011, the first author obtained a sharp `stability' version of the latter result, proving that if $|\partial A| \leq |A| (\log(2^n/|A|)+\epsilon)$, then there exists a subcube $C$ such that $|A \Delta C|/|A| = O(\epsilon /\log(1/\epsilon))$.

The `weak' version of the edge isoperimetric inequality has the following well-known generalization for the `$p$-biased' measure $\mu_p$ on the discrete cube: if $p \leq 1/2$, or if $0 < p < 1$ and $A$ is monotone increasing, then $p\mu_p(\partial A) \geq \mu_p(A) \log_p(\mu_p(A))$.

In this paper, we prove a sharp stability version of the latter result, which generalizes the aforementioned result of the first author. Namely, we prove that if $p\mu_p(\partial A) \leq \mu_p(A) (\log_p(\mu_p(A))+\epsilon)$, then there exists a subcube $C$ such that $\mu_p(A \Delta C)/\mu_p(A) = O(\epsilon' /\log(1/\epsilon'))$, where $\epsilon':=\epsilon \ln (1/p)$. This result is a central component in recent work of the authors proving sharp stability versions of a number of Erd\H{o}s-Ko-Rado type theorems in extremal combinatorics, including the seminal `complete intersection theorem' of Ahlswede and Khachatrian.

In addition, we prove a biased-measure analogue of the `full' edge isoperimetric inequality, for monotone increasing sets, and we observe that such an analogue does not hold for arbitrary sets, hence answering a question of Kalai. We use this result to give a new proof of the `full' edge isoperimetric inequality, one relying on the Kruskal-Katona theorem.
\end{abstract}

\maketitle

\section{Introduction}

Isoperimetric inequalities are of ancient interest in mathematics. In general, an isoperimetric inequality gives a lower bound on the `boundary-size'
of a set of a given `size', where the exact meaning of these words varies according to the problem. In the last fifty years, there has been a great deal of interest in {\em discrete} isoperimetric inequalities. These deal with the `boundary' of a set $A$ of vertices in a graph $G=(V,E)$ -- either the {\em edge boundary} $\partial A$, which consists of the set of edges of $G$ that join a vertex in $A$ to a vertex in $V \setminus A$, or the {\em vertex boundary} $b(A)$, which consists of the set of vertices of $V \setminus A$ that are adjacent to a vertex in $A$.

\subsection{The edge isoperimetric inequality for the discrete cube, and some stability versions thereof}

A specific discrete isoperimetric problem which attracted much interest due to its numerous applications is the edge isoperimetric problem for the $n$-dimensional discrete cube, $Q_n$. This is the graph with vertex-set $\{0,1\}^n$, where two 0-1 vectors are adjacent if they differ in exactly one coordinate. The edge isoperimetric problem for $Q_n$ was solved by Harper \cite{Harper}, Lindsey \cite{Lindsey}, Bernstein \cite{Bernstein}, and Hart \cite{Hart}. Let us describe the solution. We may identify $\{0,1\}^n$ with the power-set $\pn$ of $[n]: = \{1,2,\ldots,n\}$, by identifying a 0-1 vector $(x_1,\ldots,x_n)$ with the set $\{i \in [n]:\ x_i=1\}$. We can then view $Q_n$ as the graph with vertex set $\pn$, where two sets $S, T \subset [n]$ are adjacent if $|S \Delta T|=1$. The {\em lexicographic ordering} on $\pn$ is defined by $S > T$ iff $\min(S \Delta T) \in S$. If $m \in [2^n]$, the {\em initial segment of the lexicographic ordering on $\pn$ of size $m$} (or, in short, the {\em lexicographic family of size $m$}) is simply the $m$ largest elements of $\pn$ with respect to the lexicographic ordering.
Harper, Bernstein, Lindsey and Hart proved the following.
\begin{thm}[The `full' edge isoperimetric inequality for $Q_n$]
\label{thm:edge-iso}
If $\f \subset \p([n])$ then $|\partial \f| \geq |\partial \l|$, where $\l \subset \pn$ is the initial segment of the lexicographic ordering of size $|\f|$.
\end{thm}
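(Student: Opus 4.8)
The plan is to prove the equivalent inequality $|\partial\f|\ge|\partial\l|$ (with $|\l|=|\f|=m$) by induction on $n$, the cases $n\le1$ being trivial; write $\psi_n(m):=|\partial\l|$ for the lexicographic family $\l\subseteq\pn$ of size $m$. The point is that $\l$ has a transparent recursive structure, and the edge boundary can be ``localised'' along a single coordinate.

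First I would prove a splitting identity. Split $\f$ on the first coordinate: let $\f_0=\{S\in\f:\ 1\notin S\}$ and $\f_1=\{S\setminus\{1\}:\ S\in\f,\ 1\in S\}$, both viewed inside $\p(\{2,\dots,n\})$. Partitioning the edges of $Q_n$ into those lying inside the slice $\{1\notin S\}$, those inside $\{1\in S\}$, and those in direction $1$ gives
\[
|\partial\f|=|\partial\f_0|+|\partial\f_1|+|\f_0\,\Delta\,\f_1|.
\]
Applying the induction hypothesis to $\f_0,\f_1$, and the trivial bound $|\f_0\,\Delta\,\f_1|\ge\big||\f_0|-|\f_1|\big|$, yields $|\partial\f|\ge\psi_{n-1}(a)+\psi_{n-1}(b)+|a-b|$, where $a=|\f_0|,\ b=|\f_1|,\ a+b=m$. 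On the other hand, since coordinate $1$ is the most significant coordinate for the lexicographic order, splitting $\l$ in the same way shows $\psi_n(m)=\psi_{n-1}(\alpha)+\psi_{n-1}(\beta)+|\alpha-\beta|$ with $\beta=\min(m,2^{n-1})$ and $\alpha=m-\beta$ (the slice $\{1\in S\}$ of $\l$ being a full subcube once $m\ge 2^{n-1}$). So the induction step reduces to showing that, over all splits $a+b=m$ with $0\le a,b\le 2^{n-1}$, the quantity $\psi_{n-1}(a)+\psi_{n-1}(b)+|a-b|$ is minimised by putting a full subcube in one slice.

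To finish I would make $\psi_N$ explicit. Identifying $\{0,1\}^N$ with $\{0,\dots,2^N-1\}$ (coordinate $1$ the top bit), the lexicographic family of size $k$ is the interval $\{2^N-k,\dots,2^N-1\}$; its $(k{+}1)$-st largest element is the integer $2^N-1-k$, and flipping any one of its $s(k)$ zero-bits to a one produces a strictly larger — hence already-present — element, where $s(\cdot)$ is the binary digit sum. Thus the $(k{+}1)$-st element contributes exactly $s(k)$ new internal edges, so the lexicographic family of size $k$ has $S(k):=\sum_{j=0}^{k-1}s(j)$ internal edges and, as $Q_N$ is $N$-regular, $\psi_N(k)=Nk-2S(k)$. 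Substituting this, and using $|a-b|=(a+b)-2\min(a,b)$, the required inequality $\psi_{n-1}(a)+\psi_{n-1}(b)+|a-b|\ge\psi_n(a+b)$ collapses — with the parameter $n$ vanishing entirely — to the single elementary statement
\[
S(a+b)\ \ge\ S(a)+S(b)+\min(a,b)\qquad(a,b\ge0),
\]
which holds with equality precisely at the ``full subcube'' split; establishing it completes the proof.

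I expect this last digit-sum inequality to be the only genuine obstacle: it is where the special role of the \emph{lexicographic} order (rather than merely an order producing the same slice structure) enters, and it must be argued from properties of binary expansions, not formally. I would attempt it by induction on $a+b$ using $S(2k)=2S(k)+k$ and $S(2k+1)=2S(k)+k+s(k)$: a parity case-analysis reduces $T(a,b):=S(a+b)-S(a)-S(b)-\min(a,b)$ to $2T(\lfloor a/2\rfloor,\lfloor b/2\rfloor)$ plus a digit-sum correction that is manifestly nonnegative when $a,b$ have the same parity but requires a suitably strengthened induction hypothesis when they do not (the tight cases being $a=b$ and $a$ a power of $2$). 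An alternative that circumvents this is the monotone-set route suggested by the abstract: first replace $\f$ by a monotone increasing family of the same size without increasing $|\partial\f|$ (iterated ``up-compressions''), note that for monotone $\f$ one has $|\partial\f|=\sum_{r}(2r-n)\,|\f\cap\binom{[n]}{r}|$, and then minimise this linear functional over layer-size profiles subject to the Kruskal--Katona constraints linking consecutive layers — which again singles out $\l$.
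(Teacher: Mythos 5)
Your main route is genuinely different from the paper's: the paper first monotonizes $\f$ (preserving $|\f|$ without increasing $|\partial\f|$) and then invokes Theorem \ref{thm:Monotone}, which is itself proved via the Kruskal--Katona theorem, a ``going to infinity and back'' argument, and the Margulis--Russo lemma. You propose instead the classical direct induction on $n$. Everything up to your final display is correct and cleanly set up: the splitting identity $|\partial\f|=|\partial\f_0|+|\partial\f_1|+|\f_0\,\Delta\,\f_1|$, the recursion $\psi_n(m)=\psi_{n-1}(\alpha)+\psi_{n-1}(\beta)+(\beta-\alpha)$ for the lexicographic family, the formula $\psi_N(k)=Nk-2S(k)$, and the reduction of the whole theorem to
\[
S(a+b)\ \ge\ S(a)+S(b)+\min(a,b).
\]

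The problem is that this last inequality is not a residual detail: it carries the entire content of the theorem (everything preceding it is bookkeeping), it is essentially the key lemma of Hart's proof \cite{Hart}, and you have not proved it. Moreover, the induction you sketch for it fails as described. In the odd--odd case, writing $a=2a'+1$, $b=2b'+1$ with $a'\le b'$, one computes $T(a,b)=2T(a',b')+2s(a'+b')-s(a')-s(b')$, and the correction $2s(a'+b')-s(a')-s(b')$ is \emph{not} manifestly nonnegative: for $a'=1$, $b'=3$ (i.e.\ $a=3$, $b=7$) it equals $2s(4)-s(1)-s(3)=-1$. So negative corrections already occur when $a,b$ have the same parity, not only in the mixed-parity case, and a genuinely strengthened induction hypothesis (or a different argument altogether) is needed throughout; the inequality is true, but proving it is where all the work lies. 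A smaller inaccuracy: equality does not hold ``precisely at the full subcube split'' --- e.g.\ $S(5)=5=S(2)+S(3)+2$ --- though only the inequality is needed, so this does not damage the argument. Finally, your alternative monotone route is closer in spirit to the paper's actual proof, but the step ``minimise the linear functional $\sum_r(2r-n)|\f^{(r)}|$ over layer profiles subject to the Kruskal--Katona constraints'' is exactly the nontrivial part; the paper carries it out by showing $\mu_q(\f)\le\mu_q(\l_\lambda)$ for all $q\le p$ (Corollary \ref{thm:KK cor}) and then differentiating at $q=p$ via Margulis--Russo, so as stated that alternative is not yet a proof either.
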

A weaker, but more convenient (and, as a result, more widely-used) lower bound, is the following:
\begin{cor}[The weak edge isoperimetric inequality for $Q_n$]
\label{cor:edge-iso}
If $\f \subset \p([n])$ then
\begin{equation}\label{Eq:weak-iso}
|\partial \f| \geq |\f|\log_2(2^n/|\f|).
\end{equation}
\end{cor}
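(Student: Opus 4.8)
The quickest self-contained route is induction on $n$, splitting $\f$ according to a single coordinate; I would use this rather than deducing it from Theorem~\ref{thm:edge-iso}. Given $\f\subseteq\pn$ with $\f\neq\emptyset$ (the empty case being trivial, with the right-hand side of \eqref{Eq:weak-iso} read as $0$), set $\f_0=\{S\subseteq[n-1]:S\in\f\}$ and $\f_1=\{S\subseteq[n-1]:S\cup\{n\}\in\f\}$. Splitting the edge boundary into the part lying in the slice $x_n=0$, the part lying in the slice $x_n=1$, and the part crossing between the slices gives the identity
\[
|\partial\f|=|\partial\f_0|+|\partial\f_1|+|\f_0\,\Delta\,\f_1|,
\]
where $\partial\f_0,\partial\f_1$ are computed inside $Q_{n-1}$. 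Now use $|\f_0\,\Delta\,\f_1|\ge\bigl|\,|\f_0|-|\f_1|\,\bigr|$ and apply the induction hypothesis to $\f_0$ and $\f_1$. Writing $\alpha=|\f_0|$, $\beta=|\f_1|$ and assuming without loss of generality $\alpha\ge\beta$, it then suffices to prove that for all integers $0\le\beta\le\alpha\le 2^{n-1}$ with $\alpha>0$,
\[
\alpha\log_2\!\frac{2^{n-1}}{\alpha}+\beta\log_2\!\frac{2^{n-1}}{\beta}+(\alpha-\beta)\ \ge\ (\alpha+\beta)\log_2\!\frac{2^{n}}{\alpha+\beta}.
\]

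A one-line rearrangement turns this into $\alpha\log_2\frac{\alpha+\beta}{\alpha}+\beta\log_2\frac{\alpha+\beta}{\beta}\ge 2\beta$, and dividing by $\alpha+\beta$ and setting $q=\beta/(\alpha+\beta)\in[0,\tfrac12]$ rewrites it as $H(q)\ge 2q$, where $H(q)=-q\log_2 q-(1-q)\log_2(1-q)$ is the binary entropy function. Since $H$ is concave on $[0,1]$ with $H(0)=0$ and $H(\tfrac12)=1$, its graph lies above the chord $y=2q$ on $[0,\tfrac12]$, which is exactly the desired inequality; this closes the induction (the base case $n\le 1$ being immediate). I do not expect a genuine obstacle here: the only analytic input is $H(q)\ge 2q$ on $[0,\tfrac12]$, which is free from concavity, and the only care needed is bookkeeping — the three-way split of the boundary and the convention $0\log_2 0=0$ when one of $\f_0,\f_1$ is empty.

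Alternatively, \eqref{Eq:weak-iso} can be deduced from Theorem~\ref{thm:edge-iso}: by that theorem it is enough to verify \eqref{Eq:weak-iso} when $\f$ is the lexicographic family $\l$ of size $m$. Writing $m=\sum_{i=1}^{k}2^{a_i}$ with $a_1>\cdots>a_k\ge 0$, the family $\l$ is a disjoint union of subcubes of dimensions $a_1,\dots,a_k$, and unwinding the recursive description of initial segments yields the closed form $|\partial\l|=\sum_{i=1}^{k}\bigl(n-a_i-2(i-1)\bigr)2^{a_i}$. Substituting $|\l|=m$ and $\log_2(2^n/m)=n-\log_2 m$, inequality \eqref{Eq:weak-iso} becomes $\sum_{i=1}^{k}\bigl(\log_2 m-a_i-2(i-1)\bigr)2^{a_i}\ge 0$, which one proves by induction on the number $k$ of nonzero binary digits of $m$. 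On this route the work is concentrated in the (standard) computation of $|\partial\l|$.
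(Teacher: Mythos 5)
Your primary argument is correct and is essentially the paper's own: the coordinate-splitting induction is exactly the $p=1/2$ case of the inductive proof of Theorem~\ref{thm:edge-iso-biased} given in Section~\ref{sec:prelim}, with your entropy inequality $H(q)\ge 2q$ playing the role of Lemma~\ref{Lem:ind-step}, while your alternative route (deducing \eqref{Eq:weak-iso} from Theorem~\ref{thm:edge-iso}) is how the paper formally presents the statement, namely as a corollary. (Minor typo: ``free from concavity'' should presumably read ``follows from concavity''.)
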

Equality holds in (\ref{Eq:weak-iso}) iff $\f$ is a subcube, so (\ref{Eq:weak-iso}) is sharp only when $|\f|$ is a power of 2.

When an isoperimetric inequality is sharp, and the extremal sets are known, it is natural to ask whether the inequality is also `stable' --- i.e., if a set has boundary of size `close' to the minimum, must that set be `close in structure' to an extremal set?

For Corollary~\ref{cor:edge-iso}, this problem was studied in several works. Using a Fourier-analytic argument, Friedgut, Kalai and Naor~\cite{FKN} obtained a stability result for sets of size $2^{n-1}$, showing that if $\f \subset \pn$ with $|\f| = 2^{n-1}$ satisfies $|\partial \f| \leq (1+\epsilon)2^{n-1}$, then $|\f \Delta \mathcal{C}|/2^n= O(\epsilon)$ for some codimension-1 subcube $\mathcal{C}$. (The dependence upon $\epsilon$ here is almost sharp, viz., sharp up to a factor of $\Theta(\log(1/\epsilon))$). Bollob\'as, Leader and Riordan (unpublished) proved an analogous result for $|\f| \in \{2^{n-2},2^{n-3}\}$, also using a Fourier-analytic argument. Samorodnitsky \cite{Samorodnitsky09} used a result of Keevash~\cite{Keevash08} on the structure of $r$-uniform hypergraphs with small shadows, to prove a stability result for all $\f \subset \pn$ with $\log_2|\f| \in \mathbb{N}$ (i.e., all sizes for which Corollary~\ref{cor:edge-iso} is tight), under the rather strong condition $|\partial \f| \leq (1+O(1/n^4))|\partial \l|$. In \cite{Ellis}, the first author proved the following stability result (which implies the above results), using a recursive approach and an inequality of Talagrand \cite{Talagrand} (which was proved via Fourier analysis).
\begin{thm}[\cite{Ellis}]
\label{thm:e}
There exists an absolute constant $c>0$ such that the following holds. Let $0 \leq \delta < c$. If $\f \subset \pn$ with $|\f| = 2^{d}$ for some \(d \in \mathbb{N}\), and $|\f \Delta \mathcal{C}| \geq \delta 2^d$ for all $d$-dimensional subcubes $\mathcal{C} \subset \pn$, then
$$|\partial \f| \geq |\partial \mathcal{C}| +2^d \delta \log_{2}(1/\delta).$$
\end{thm}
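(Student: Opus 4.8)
The plan is to argue by induction on $n$ (with $d$ allowed to vary in $\{0,1,\dots,n\}$, and in fact proving a common generalisation valid for all cardinalities, which is what makes the recursion close); the base cases (small $n$) are vacuous once $c$ is taken small enough. For the inductive step I split $\f$ along the last coordinate, writing $\f=\f_0\sqcup\f_1$ with $\f_0=\{S:S\in\f,\ n\notin S\}$ and $\f_1=\{S\setminus\{n\}:S\in\f,\ n\in S\}$ regarded as subsets of $\mathcal{P}([n-1])$, and using
\[
|\partial\f| \;=\; |\partial\f_0|+|\partial\f_1|+|\f_0\,\Delta\,\f_1|,
\]
with the first two boundaries in $Q_{n-1}$ and the last term counting the direction-$n$ edges of $\partial\f$. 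Put $a=|\f_0|\ge b=|\f_1|$, so $a+b=2^d$ and $a\in[2^{d-1},2^d]$. The easy endpoint is $b=0$: then $\f$ lies in the subcube $\{n\notin S\}$, so $|\partial\f|=|\partial\f_0|+2^d$, and since the $d$-subcubes of $Q_n$ inside $\{n\notin S\}$ are precisely the $d$-subcubes of $Q_{n-1}$, the set $\f_0$ inherits the hypothesis of being $\delta$-far from all of them; the inductive hypothesis in $Q_{n-1}$ then does it (and $a=0$ is symmetric). When the split is unbalanced ($b>0$, $a$ much larger than $b$) the cross term $|\f_0\,\Delta\,\f_1|\ge a-b$ is of order $2^d$; combined with the full isoperimetric inequality of Theorem~\ref{thm:edge-iso} applied to $\f_0$ and $\f_1$ --- whose boundaries then exceed $|\partial\l_a|,|\partial\l_b|$, which carry real slack away from powers of $2$ --- and with the inductive hypothesis applied to the large fibre (to cover the sub-case where $\f$ is genuinely close to a $d$-subcube that fixes coordinate $n$, i.e.\ ``peeling'' that coordinate), this case is closed.

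The crux is the roughly-balanced, close-to-a-subcube configuration, which after relabelling coordinates reads: $\f=\mathcal{C}\,\Delta\,\b$ with $\mathcal{C}$ the nearest $d$-subcube and $|\b|=\delta^*2^d$, $\delta^*\ge\delta$. The useful observation is the exact identity $\partial(\mathcal{C}\,\Delta\,\b)=\partial\mathcal{C}\,\Delta\,\partial\b$ --- an edge lies in the left-hand side iff it lies in exactly one of $\partial\mathcal{C},\partial\b$ --- which gives
\[
|\partial\f|-|\partial\mathcal{C}| \;=\; |\partial\b\setminus\partial\mathcal{C}|-|\partial\b\cap\partial\mathcal{C}|,
\]
so everything reduces to bounding the right-hand side below by $2^d\delta^*\log_2(1/\delta^*)$. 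Since $\partial\mathcal{C}$ consists only of edges that leave $\mathcal{C}$ in one of its $n-d$ fixed directions, every edge of $\partial\b$ lying inside one of the $2^{n-d}$ parallel copies of $Q_d$ is counted in $|\partial\b\setminus\partial\mathcal{C}|$; summing the weak isoperimetric inequality (Corollary~\ref{cor:edge-iso}) over those copies and using that $t\mapsto t\log_2(2^d/t)$ is subadditive already yields $|\partial\b\setminus\partial\mathcal{C}|\ge|\b|\bigl(1+\log_2(1/\delta^*)\bigr)=2^d\delta^*\log_2(1/\delta^*)+|\b|$. It then remains to show that the ``crossing'' edges of $\partial\b$, those changing a fixed coordinate of $\mathcal{C}$, cause a deficit $|\partial\b\cap\partial\mathcal{C}|$ of at most $|\b|$; when $\mathcal{C}$ has codimension one this is immediate because there $|\partial\b\cap\partial\mathcal{C}|=|\b_0\,\Delta\,\b_1|\le|\b|$, and for higher codimension I would restore it by peeling off the fixed coordinates of $\mathcal{C}$ one at a time inside the induction, so that at each step only $2^d$ crossing edges have to be handled at once. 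Finally, the configurations that resist both the recursion and this perturbation analysis --- roughly balanced with both fibres genuinely spread out, hence far from every $d$-subcube --- are dealt with by the inequality of Talagrand, which supplies a linear-in-$\delta^*$ lower bound for the excess boundary, valid throughout the macroscopic range $\delta^*=\Omega(1)$ where $t\mapsto t\log_2(1/t)$ stops being monotone (so $\delta^*$ cannot simply be replaced by $\delta$); this is enough there since $\delta\log_2(1/\delta)$ is tiny for small $\delta$.

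I expect the balanced case to be the main obstacle, and within it two points in particular: getting the \emph{sharp} leading constant $1$ on $\delta\log_2(1/\delta)$, which compels one to account for the edges of $\partial\b$ essentially exactly --- as boundaries within the $Q_d$-copies plus crossing edges --- rather than through a single black-box application of an isoperimetric inequality; and keeping track of the interplay between the nearest $d$-subcube of $\f$ and the nearest $(d{-}1)$-subcubes of the fibres $\f_0,\f_1$, which governs exactly how the inductive hypothesis may be invoked on each fibre and is where the bookkeeping is most delicate.
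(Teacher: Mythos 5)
The paper does not prove this theorem; it quotes it from \cite{Ellis}, whose argument is an induction on $n$ of the same general shape as the proofs of Theorems \ref{thm:skewed-iso-stability} and \ref{thm:mon-iso-stability} in Section \ref{sec:main}: one shows (via Talagrand's inequality) that a set with near-minimal edge boundary has a coordinate of near-maximal influence, bootstraps to show the set is almost entirely contained in the corresponding halfcube, and then restricts and inducts. Your outer architecture (induction on $n$, splitting along a coordinate, Talagrand for the genuinely spread-out configurations) is consistent with that, but the crux of your argument is a direct perturbation analysis around the nearest subcube, and that step contains a genuine error. The identity $\partial(\mathcal{C}\,\Delta\,\b)=\partial\mathcal{C}\,\Delta\,\partial\b$ is correct, and your lower bound $|\partial\b\setminus\partial\mathcal{C}|\ge|\b|(1+\log_2(1/\delta^*))$ from the within-copy edges is fine (each fibre of $\b$ has size at most $|\b|/2$ because $|\f|=|\mathcal{C}|$ forces $|\mathcal{C}\setminus\f|=|\f\setminus\mathcal{C}|=|\b|/2$). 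But the claim that the deficit satisfies $|\partial\b\cap\partial\mathcal{C}|\le|\b|$ is false once the codimension $n-d$ is at least $2$: a point $x\in\mathcal{C}\setminus\f$ none of whose outside-neighbours lies in $\f$ contributes all $n-d$ of its fixed-direction edges to $\partial\b\cap\partial\mathcal{C}$, so the deficit can be as large as $(n-d)|\b|/2$. The bound is rescued in examples only because the points of $\f\setminus\mathcal{C}$ that are far from $\mathcal{C}$ contribute comparably many fixed-direction edges to $|\partial\b\setminus\partial\mathcal{C}|$ --- edges your accounting discards entirely. Balancing these two $\Theta((n-d)|\b|)$-sized quantities against each other, while preserving the sharp constant $1$ in front of $\delta\log_2(1/\delta)$, is precisely the hard part, and your sketch does not do it.

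The two repairs you propose do not close the gap as described. ``Peeling off the fixed coordinates of $\mathcal{C}$ one at a time inside the induction'' requires applying the inductive hypothesis to the restriction of $\f$ to a halfcube, but that restriction does not in general have cardinality a power of $2$, so the theorem as stated does not apply to it; you acknowledge this by invoking ``a common generalisation valid for all cardinalities,'' but you never formulate it, and formulating a correct stability statement for arbitrary cardinalities (where the extremal sets are no longer subcubes) is essentially the content of Theorem \ref{thm:full-stability}, which the paper describes as much harder. Likewise, the unbalanced case is closed only by assertion. To make this approach work you would need either an exact accounting of the fixed-direction edges incident to $\f\,\Delta\,\mathcal{C}$ (not just the within-copy ones), or to abandon the perturbative route in favour of the influence/bootstrapping argument: show that some $I_i[\f]$ is within $O(\epsilon)$ of $2^{d+1-n}\cdot 2\mu$, deduce that $\min\{|\f_{i\to 0}|,|\f_{i\to 1}|\}$ is not merely $O(\epsilon 2^d)$ but $O\bigl(\epsilon 2^d/\log(1/\epsilon)\bigr)$, and only then recurse on the large fibre.
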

As observed in \cite{Ellis}, this result is best-possible (except for the condition $0 \leq \delta < c$, which was conjectured to be unnecessary in \cite{Ellis}).

In \cite{LOL}, we obtain the following stability version of Theorem \ref{thm:edge-iso}, which applies to families of arbitrary size (not just a power of 2), and which is sharp up to an absolute constant factor.
\begin{thm}
\label{thm:full-stability}
There exists an absolute constant $C>0$ such that
the following holds. If $\f\subset \pn$ and $\l \subset \p([n])$ is the initial segment of the lexicographic ordering of size $|\f|$, then there exists an automorphism $\sigma$ of $Q_n$ such that
$$|\f\, \Delta\, \sigma(\l)| \leq C(|\partial \f| - |\partial \l|).$$
\end{thm}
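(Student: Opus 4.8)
I would prove this by induction on $n$, arranging the induction so as to lose no constant factor. Write $m=|\f|$, let $D:=|\partial\f|-|\partial\l|$ be the deficiency, and let $\beta_r(s)$ be the minimum edge-boundary of an $s$-element subset of $\p([r])$ (so $\beta_r(s)=|\partial\l|$ for $\l\subseteq\p([r])$ the size-$s$ lexicographic family). By complementation one may assume $m\le 2^{n-1}$: the complement of an initial segment of the lexicographic order is the image under $x\mapsto\bar x$ of another initial segment, and $|\partial\f|$ is invariant under this; and if $D\ge 2m/C$ the conclusion is trivial, since $|\f\,\Delta\,\sigma(\l)|\le 2m$ for any automorphism $\sigma$, so assume $D\ll m$. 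In the inductive step, choose a coordinate --- say, after relabelling, the first --- for which $\min\bigl(|\{x\in\f: x_1=1\}|,|\{x\in\f:x_1=0\}|\bigr)$ is as small as possible, and split $\f$ into its two sections, identified with $\a,\b\subseteq\p([n-1])$ of sizes $a\ge b$ with $a+b=m$. Then $|\partial\f|=|\partial\a|+|\partial\b|+|\a\,\Delta\,\b|$, and since Theorem~\ref{thm:edge-iso} yields the recursion $\beta_n(m)=\min_{a'+b'=m}\bigl(\beta_{n-1}(a')+\beta_{n-1}(b')+|a'-b'|\bigr)$, the deficiency splits as a sum of four non-negative terms,
\[
D=\underbrace{\bigl(|\partial\a|-\beta_{n-1}(a)\bigr)}_{D_\a}\;+\;\underbrace{\bigl(|\partial\b|-\beta_{n-1}(b)\bigr)}_{D_\b}\;+\;\underbrace{\bigl(|\a\,\Delta\,\b|-(a-b)\bigr)}_{E}\;+\;\underbrace{\bigl(\beta_{n-1}(a)+\beta_{n-1}(b)+(a-b)-\beta_n(m)\bigr)}_{P}.
\]

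In the main case, $b$ is small and the split $(a,b)$ is far from every split attaining the minimum in the recursion, so that $P\gtrsim b$; here I would simply ``project onto the large section''. Applying the inductive hypothesis to $\a$ gives an automorphism $\sigma$ of $Q_{n-1}$ with $\bigl|\a\,\Delta\,\sigma(\l_a)\bigr|\le C D_\a$ (where $\l_a\subseteq\p([n-1])$ is the size-$a$ lexicographic family), and, since $m\le 2^{n-1}$, the set $\l'$ with $\{x_1=1\}$-section $\sigma(\l_m)$ and empty $\{x_1=0\}$-section is an automorphic image of the size-$m$ lexicographic family in $\pn$; as $\sigma(\l_a)\subseteq\sigma(\l_m)$,
\[
|\f\,\Delta\,\l'|\;\le\;\bigl|\a\,\Delta\,\sigma(\l_a)\bigr|+(m-a)+b\;\le\;C D_\a+2b .
\]
The coefficient of $D_\a$ here is exactly $1$, so the induction closes with $C=C_0$ provided $2b\le C_0 P$. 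Using $\beta_n(m)=\beta_{n-1}(m)+m$ for $m\le 2^{n-1}$, one has $P=\beta_{n-1}(b)-2b-\bigl(\beta_{n-1}(m)-\beta_{n-1}(m-b)\bigr)$, and here $\beta_{n-1}(b)\ge b\log_2(2^{n-1}/b)$ (Corollary~\ref{cor:edge-iso}) exceeds the marginal cost $\beta_{n-1}(m)-\beta_{n-1}(m-b)$ of the last $b$ elements of $\l_m\subseteq\p([n-1])$ --- which a standard estimate bounds by $b\log_2(2^{n-1}/m)+O(b)$ --- by a margin $\gtrsim b$ once $b/m$ is below a suitable absolute constant; this is the quantitative heart of the main case.

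The first obstacle is that $(a,b)$ can be close to an optimal split even when $b$ is small --- for instance when $m$ is just above a power of $2$, where $(m-1,1)$ already attains the minimum --- so that $P$ is too small for the projection estimate. In this situation one must instead \emph{glue} the inductive approximants for $\a$ and for $\b$ into an automorphic image of the size-$m$ lexicographic family in $\pn$; this is automatic when the split is optimal and the approximants are suitably nested (one checks that the glued family is then a lexicographic automorph), but the delicate point is to perform the necessary reconciliation of the two approximants --- adjusting the smaller one so that it sits inside the larger --- without degrading the constant $C$. I expect this to require strengthening the inductive statement so that it produces a ``canonical'' nearest approximant, compatible with such gluings; getting the bookkeeping of this reconciliation to cost only an $O(E+P)$ error (rather than an $O(D_\a+D_\b)$ error, which would multiply $C$) is the crux of this case.

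The second obstacle is the ``persistently balanced'' case: every coordinate splits $\f$ into two sections of comparable size, so $\f$ is far, in a strong sense, from all lexicographic automorphs. After deleting any coordinates on which $\f$ does not depend --- which exhibits $\f$ as a cylinder $\{0,1\}^t\times\g$ with $\beta_n(m)=2^t\beta_{n-t}(|\g|)$ and rescales the deficiency by $2^t$, reducing to a lower-dimensional cube --- one is left to show that a set depending on essentially all its coordinates and balanced in every one of them satisfies $D\gtrsim m$, after which the trivial bound $|\f\,\Delta\,\sigma(\l)|\le 2m$ finishes the case. This last estimate is a robust edge-boundary lower bound of the same flavour as --- though quantitatively weaker than --- the stability phenomenon behind Theorem~\ref{thm:e}, and I would establish it either by a Fourier-analytic/Talagrand-type argument in the spirit of \cite{Ellis} (bounding the influences of the indicator of $\f$) or by a sharp combinatorial analysis of $\beta_n$ together with an explicit description of the set of minimising splits in the recursion (which cluster near the values of $m$ close to powers of $2$). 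Granted these two ingredients, the remainder --- the reduction, the four-term splitting of the deficiency, and the projection step --- is routine.
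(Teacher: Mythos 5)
A preliminary remark: this theorem is not proved in the present paper at all --- it is quoted from \cite{LOL}, where the authors describe the argument as ``much more involved'' than the power-of-two case of \cite{Ellis} --- so there is no in-paper proof to compare against, and I can only assess your plan on its own terms.

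The skeleton you set up is sound: the recursion $\beta_n(m)=\min_{a'+b'=m}\bigl(\beta_{n-1}(a')+\beta_{n-1}(b')+|a'-b'|\bigr)$ is correct, the four-term decomposition of the deficiency into non-negative pieces is valid, the complementation reduction to $m\le 2^{n-1}$ works, and the ``project onto the large section'' step --- including the assertion that $P\gtrsim b$ once $b/m$ is small and the split is far from optimal --- is essentially right (modulo a careful estimate of the marginal cost $\beta_{n-1}(m)-\beta_{n-1}(m-b)$, which needs the exact local formula for the boundary of a lexicographic family rather than just Corollary~\ref{cor:edge-iso}). But what you have written is a plan, not a proof, and the two places where you stop are precisely where all the difficulty of the theorem lives. (i) The gluing case is not a boundary phenomenon that can be quarantined: already for $m=2^d$ the balanced split $(2^{d-1},2^{d-1})$ attains the minimum in the recursion, so $P=0$ and you must reconcile two automorphic approximants of $\l_a$ and $\l_b$ into a single automorph of $\l_m$. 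The naive triangle-inequality reconciliation costs $O(D_{\a}+D_{\b})$, which multiplies $C$ at every level of the induction and is fatal; you correctly name this as the crux but supply no mechanism --- no strengthened inductive statement, no argument that one section's deficiency dominates, no construction of a ``canonical'' approximant. (ii) The persistently balanced case requires showing that a set which is balanced in, and genuinely depends on, every coordinate has $D=\Omega(m)$; this is itself a stability theorem of the same nature as the target (at the coarsest scale), and ``I would establish it by a Talagrand-type or sharp combinatorial argument'' is a citation of a hoped-for lemma, not a proof (moreover your cylinder reduction needs an approximate version, since a coordinate can be balanced while the two sections differ slightly). Until (i) and (ii) are actually carried out, the induction does not close.
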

The proof uses only combinatorial tools, but is much more involved than the proof of Theorem \ref{thm:e} in \cite{Ellis}.

\subsection{Influences of Boolean functions}

An alternative viewpoint on the edge isoperimetric inequality, which we will use throughout the paper, is via \emph{influences} of Boolean functions. For a function $f:\{0,1\}^n \rightarrow \{0,1\}$, the influence of the $i$th coordinate on $f$ is defined by
\[
 I_i[f] := \Pr_{x \in \{0,1\}^n}[f(x) \neq f(x \oplus e_i)],
\]
where $x \oplus e_i$ is obtained from $x$ by flipping the $i$th coordinate, and the probability is taken with respect to the uniform measure on $\{0,1\}^n$. The \emph{total influence} of the function is
\[
 I[f] := \sum_{i=1}^n I_i[f].
\]
Over the last thirty years, many results have been obtained on the influences of Boolean functions, and have proved extremely useful in such diverse fields as theoretical computer science, social choice theory and statistical physics, as well as in combinatorics (see, e.g., the survey~\cite{Kalai-Safra}).

\medskip

It is easy to see that the total influence of a function $f$ is none other than the size of the edge boundary of the set $A(f)=\{x\in \{0,1\}^n:\ f(x)=1\}$, appropriately normalised: viz., $I[f] = |\partial (A(f))|/2^{n-1}$. Hence, Corollary~\ref{cor:edge-iso} has the following reformulation in terms of Boolean functions and influences:
\begin{prop}[The weak edge isoperimetric inequality for $Q_n$ -- influence version]
\label{cor:edge-iso-inf}
If $f:\{0,1\}^n \rightarrow \{0,1\}$ is a Boolean function then
\begin{equation}\label{Eq:Main}
I[f] \geq 2\mathbb{E}[f]\log_2(1/\mathbb{E}[f]).
\end{equation}
\end{prop}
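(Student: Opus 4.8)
The plan is to recognise that Proposition~\ref{cor:edge-iso-inf} is nothing more than Corollary~\ref{cor:edge-iso} transcribed into the language of Boolean functions, and then---for the sake of a self-contained argument---to indicate how one could instead obtain it directly by induction on $n$. For the first route, given $f:\{0,1\}^n\to\{0,1\}$ I would set $\f:=A(f)\subset\pn$ and invoke the two normalisation identities recorded just above the statement, namely $I[f]=|\partial\f|/2^{n-1}$ and $\mathbb{E}[f]=|\f|/2^n$. Substituting these into the bound $|\partial\f|\ge|\f|\log_2(2^n/|\f|)$ of Corollary~\ref{cor:edge-iso} and dividing through by $2^{n-1}$ yields
\[
 I[f]=\frac{|\partial\f|}{2^{n-1}}\ \ge\ \frac{2|\f|}{2^{n}}\,\log_2\!\left(\frac{2^{n}}{|\f|}\right)=2\,\mathbb{E}[f]\,\log_2(1/\mathbb{E}[f]),
\]
which is exactly \eqref{Eq:Main}; the degenerate cases $\mathbb{E}[f]\in\{0,1\}$ are trivial, both sides vanishing.

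Alternatively, I would prove \eqref{Eq:Main} from scratch by induction on $n$. The base case $n=1$ is immediate: either $f$ is constant and both sides are $0$, or $\mathbb{E}[f]=\tfrac12$ and $I[f]=1=2\cdot\tfrac12\log_2 2$. For the inductive step, split $f$ according to its last coordinate as a pair $(f_0,f_1)$ of functions on $\{0,1\}^{n-1}$, and write $a:=\mathbb{E}[f_0]$, $b:=\mathbb{E}[f_1]$, so that $\mathbb{E}[f]=(a+b)/2$. Counting the edges of $\partial\f$ inside each of the two facets separately from the ``vertical'' edges gives
\[
 I[f]=\tfrac12 I[f_0]+\tfrac12 I[f_1]+I_n[f],\qquad I_n[f]=\Pr_x[f_0(x)\ne f_1(x)]\ \ge\ |a-b|.
\]
Applying the inductive hypothesis to $f_0$ and $f_1$ then reduces the claim to the one-variable inequality $\phi(a)+\phi(b)+|a-b|\ge(a+b)+\phi(a+b)$ for $a,b\ge 0$ with $a+b\le 1$, where $\phi(t):=t\log_2(1/t)$.

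I expect the only real obstacle in either route to be minor. In the first route everything is bookkeeping, and the single point worth checking is that the normalising denominator is $2^{n-1}$ rather than $2^n$---forced by the fact that each edge of $Q_n$ is counted exactly once and there are $n2^{n-1}$ edges in all. In the inductive route the one genuinely non-formal step is the one-variable inequality; I would handle it by fixing $s:=a+b$, assuming without loss of generality $a\ge b$, and studying $h(b):=\phi(s-b)+\phi(b)-2b-\phi(s)$ on $[0,s/2]$, where $h(0)=h(s/2)=0$ and $h'(b)=\log_2\frac{s-b}{4b}$, so that $h$ rises on $[0,s/5]$ and falls on $[s/5,s/2]$ and is therefore non-negative throughout, with equality at $a=b$ (consistent with the subcube being extremal). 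This elementary calculus lemma is the only part requiring any computation, and even that is routine.
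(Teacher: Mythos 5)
Your first route is exactly the paper's argument: the proposition is stated there as an immediate reformulation of Corollary~\ref{cor:edge-iso} via the normalisations $I[f]=|\partial A(f)|/2^{n-1}$ and $\mathbb{E}[f]=|A(f)|/2^n$, and your bookkeeping (including the factor $2^{n-1}$) is correct. Your alternative inductive route is also sound --- it is essentially the $p=\tfrac12$ specialisation of the induction used in Section~\ref{sec:prelim} for Theorem~\ref{thm:edge-iso-biased}, with your direct calculus verification of $\phi(a)+\phi(b)+|a-b|\ge(a+b)+\phi(a+b)$ replacing the paper's comparison of the functions $F,G,H$ in Lemma~\ref{Lem:ind-step} --- but it is not needed for this statement.
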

\noindent Theorem~\ref{thm:e} can be restated similarly.

\subsection{The biased measure on the discrete cube}

For $p\in [0,1]$, the {\em $p$-biased measure on $\p([n])$} is defined by
$$\mu_p^{(n)}(S) = p^{|S|} (1-p)^{n-|S|}\quad \forall S \subset [n].$$
In other words, we choose a random subset of $[n]$ by including each $j \in [n]$ independently with probability $p$. When $n$ is understood, we will omit the superscript $(n)$, writing $\mu_p = \mu_p^{(n)}$.

The definition of influences with respect to the biased measure is, naturally,
\[
I_i^p[f] := \Pr_{x \sim \mu_p}[f(x) \neq f(x \oplus e_i)],
\]
and $I^p[f] := \sum_{i=1}^n I_i^p[f]$. We abuse notation slightly and write $\mu_p(f):=\mathbb{E}_{\mu_p}[f]$. We remark that we may write $I^p[f] = \mu_p(\partial A(f))$, where we define the measure $\mu_p$ on subsets of $E(Q_n)$ by $\mu_p(\{x,x\oplus e_i\}) = p^{\sum_{j \neq i} x_i}(1-p)^{n-1-\sum_{j \neq i}x_j}$. (Note that $\mu_p(E(Q_n)) = n$, so $\mu_p$ is not a probability measure on $E(Q_n)$ unless $n=1$.)

\medskip

Many of the applications of influences (e.g., to the study of percolation \cite{BKS}, threshold phenomena in random graphs \cite{Bourgain99,Friedgut-SAT}, and hardness of approximation \cite{Dinur-Safra}) rely upon the use of the biased measure on the discrete cube. As a result, many of the central results on influences have been generalized to the biased setting (e.g. \cite{Friedgut98,Friedgut-Kalai,Hatami12}), and the edge isoperimetric inequality is no exception. The following `biased' generalization of Proposition~\ref{cor:edge-iso-inf} is considered folklore (see~\cite{KK06}).
\begin{thm}[The weak biased edge isoperimetric inequality for $Q_n$]
\label{thm:edge-iso-biased}
If $f:\{0,1\}^n \rightarrow \{0,1\}$ is a Boolean function, and $0 < p \leq 1/2$, then
\begin{equation}\label{Eq:Main}
pI^p[f] \geq \mu_p(f)\log_p(\mu_p(f)).
\end{equation}
The same statement holds for all $p \in (0,1)$ if $f$ is monotone increasing.
\end{thm}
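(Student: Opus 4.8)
The plan is to prove the inequality $pI^p[f] \geq \mu_p(f)\log_p(\mu_p(f))$ by induction on $n$, mimicking the standard compression/recursion proof of the weak (uniform) edge isoperimetric inequality but tracking the $p$-biased weights. For $n=1$ one checks the two-point case directly: if $f$ is identically $0$ or $1$ both sides vanish, and otherwise $\mu_p(f) \in \{p, 1-p\}$, $I^p[f] = 1$, and the claim reduces to $p \geq p\log_p(p) = p$ or $p \geq (1-p)\log_p(1-p)$, the latter holding because $\log_p(1-p) \le 1$ exactly when $1-p \le p$, i.e. $p \ge 1/2$ — which is why the unrestricted version needs $p \le 1/2$, while for monotone $f$ on one coordinate the value $\mu_p(f) = 1-p$ case cannot arise in a way that violates the bound (a monotone non-constant function on $\{0,1\}$ has $f(0)=0$, $f(1)=1$, so $\mu_p(f) = p$). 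For the inductive step, decompose $f$ according to the last coordinate: write $f_0, f_1 : \{0,1\}^{n-1} \to \{0,1\}$ for the restrictions $f_0(x) = f(x,0)$, $f_1(x) = f(x,1)$, set $a = \mu_p^{(n-1)}(f_0)$, $b = \mu_p^{(n-1)}(f_1)$, so that $\mu_p(f) = (1-p)a + pb$. Then
\begin{equation*}
I^p[f] = (1-p)I^p[f_0] + p I^p[f_1] + I_n^p[f],
\end{equation*}
and $I_n^p[f] = \mu_p^{(n-1)}(f_0 \oplus f_1) \ge |a - b|$ (and, if $f$ is monotone, $f_0 \le f_1$ pointwise, so $I_n^p[f] = b - a \ge 0$).

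Applying the induction hypothesis to $f_0$ and $f_1$ and multiplying by $p$, it suffices to prove the pointwise analytic inequality
\begin{equation*}
(1-p)\,\psi(a) + p\,\psi(b) + p|a-b| \;\geq\; \psi\big((1-p)a + pb\big), \qquad \psi(t) := t\log_p t,
\end{equation*}
for all $a, b \in [0,1]$ (with the convention $\psi(0)=0$), under the hypothesis $p \le 1/2$; and in the monotone case, the same inequality with $|a-b|$ replaced by $b-a$ and $a \le b$. Here $\psi$ is convex (its second derivative is $1/(t\ln p)$, which has a fixed sign), so $(1-p)\psi(a)+p\psi(b) \ge \psi((1-p)a+pb)$ already by Jensen; the extra term $p|a-b|$ only helps, so the non-monotone case is essentially immediate once $p \le 1/2$ is used to handle the boundary behaviour — actually one must be slightly careful, since $\ln p < 0$ makes $\psi$ \emph{concave}, so the inequality runs the other way and one genuinely needs the $p|a-b|$ term. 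The cleanest route is to fix $b$ and $a$ and view the left side minus the right side as a function of one variable, reducing to a single-variable calculus check; alternatively, one expands around $a = b$ and uses a second-order Taylor estimate together with the crude bound $|a-b| \ge $ (the quadratic error term). In the monotone case, the analogous one-variable reduction goes through for all $p \in (0,1)$ because one only needs the inequality on the range $a \le b$, where the first-order term $p(b-a)$ has the right sign to dominate.

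The main obstacle is precisely this analytic lemma: because $\log_p$ reverses the direction of convexity relative to the uniform case ($\ln p < 0$), Jensen's inequality points the wrong way, and one must show that the boundary term $I_n^p[f]$ compensates — and it does so only under $p \le 1/2$ in general. I would isolate this as a standalone one-variable inequality and prove it by differentiation, checking the endpoints $a \in \{0, b\}$ and $a=1$ separately (the endpoint $a=0$, where $\psi'$ blows up, is where the hypothesis $p\le 1/2$ really bites). Everything else — the recursive decomposition, the bound $I_n^p[f] \ge |a-b|$, and the bookkeeping of the $(1-p)$ and $p$ weights — is routine. For the monotone statement one additionally notes that the restrictions $f_0, f_1$ of a monotone $f$ are themselves monotone, so the induction stays within the monotone class, and the sign of $b-a$ removes the need for $p \le 1/2$.
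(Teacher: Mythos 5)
Your reduction is exactly the paper's: induct on $n$, split off one coordinate via $I^p[f]=(1-p)I^p[f_0]+pI^p[f_1]+I^p_n[f]$ and $I^p_n[f]\ge|a-b|$, and reduce to the two-variable inequality $(1-p)\psi(a)+p\psi(b)+p|a-b|\ge\psi((1-p)a+pb)$ with $\psi(t)=t\log_p t$ (this is the paper's Lemma~\ref{Lem:ind-step}, with $F,H$ versus $G$). So the strategy is right. But the analytic lemma is the entire content of the theorem, and you have not proved it: ``view the left side minus the right side as a function of one variable and prove it by differentiation'' is a plan, not an argument, and the one place you do engage with the inequality that makes the hypothesis $p\le 1/2$ necessary, the argument is wrong.

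Concretely: in the base case $\mu_p(f)=1-p$ you need $p\ge(1-p)\log_p(1-p)$ for $p\le\tfrac12$. Your justification — ``$\log_p(1-p)\le 1$ exactly when $1-p\le p$'' — has the direction of the equivalence reversed (since $\log_p$ is decreasing, $\log_p(1-p)\le 1$ iff $1-p\ge p$), and even the corrected observation only yields $(1-p)\log_p(1-p)\le 1-p$, which is weaker than the required bound $\le p$. The inequality $K(p):=p-(1-p)\log_p(1-p)\ge 0$ on $(0,\tfrac12]$ is a genuine fact needing a genuine proof (the paper's Claim~\ref{claim on alpha(p)} uses a third-derivative/inflection-point argument for $\alpha(p)=-p\ln p+(1-p)\ln(1-p)$), and the \emph{same} quantity $K(p)$ is exactly the constant that appears when one compares $\partial H/\partial y$ with $\partial G/\partial y$ in the inductive step for the regime $a>b$. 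Relatedly, your diagnosis that the hypothesis $p\le\tfrac12$ ``bites'' at the endpoint $a=0$ is incorrect: at $a=0\le b$ one has equality $p\psi(b)+pb=\psi(pb)$ for every $p\in(0,1)$ (the dictator direction, which is why the monotone case needs no restriction on $p$); the restriction is needed precisely in the regime $a>b$, most visibly at $(a,b)=(1,0)$, where the inequality reduces to $K(p)\ge 0$. So as written the proposal sets up the recursion correctly but omits, and partly misidentifies, the step in which the hypothesis on $p$ is actually used.
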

\noindent Note that a function $f:\{0,1\}^n \to \{0,1\}$ is said to be {\em monotone increasing} if $f(x) \leq f(y)$ whenever $x_i \leq y_i$ for all $i \in [n]$. An easy inductive proof of Theorem \ref{thm:edge-iso-biased} is presented in~\cite{KK06}.

\subsection{A stability version of the biased edge isoperimetric inequality}

The first main result of this paper is the following stability version of Theorem~\ref{thm:edge-iso-biased}.
\begin{thm}
\label{thm:skewed-iso-stability} There exist absolute constants $c_0,C_1 >0$ such that the following holds. Let $0<p\leq\frac{1}{2}$, and let
$\epsilon\leq c_{0}/\ln(1/p)$. Let $f\colon\left\{ 0,1\right\} ^{n}\to\left\{ 0,1\right\} $
be a Boolean function such that
\[
pI^{p}[f]\leq\mu_{p}(f)\left(\log_{p}(\mu_{p}(f))+\epsilon\right).
\]
Then there exists a subcube $S\subset\{0,1\}^{n}$ such that
\begin{equation}
\mu_{p}(f\Delta1_{S})\leq C_1 \frac{\epsilon \ln(1/p)}{\ln\left(1/(\epsilon \ln(1/p)) \right)}\mu_{p}(f),\label{eq:conc-1}
\end{equation}
where $f \Delta 1_S := \{x:f(x) \neq 1_S(x)\}$.
\end{thm}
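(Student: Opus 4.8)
The strategy is to reduce the biased statement to the unbiased stability theorem (Theorem~\ref{thm:e}) by an averaging/embedding argument, in the spirit of the inductive proof of Theorem~\ref{thm:edge-iso-biased}, but carried out quantitatively so as to track the deficit $\epsilon$. First I would normalise: writing $\mu_p(f) = p^t$ for a real $t \ge 0$, the hypothesis becomes $p I^p[f] \le p^t(t + \epsilon)$ (in the natural normalisation where $\log_p$ appears), so the ``excess total influence'' is at most $\epsilon \mu_p(f)$, and $\epsilon' = \epsilon\ln(1/p)$ is precisely the excess in the $e$-based normalisation. The first reduction is to the case where $t$ is (close to) an integer $d$: if $t$ is far from an integer, the inequality $p I^p[f] \ge \mu_p(f)\log_p(\mu_p(f))$ has a genuine additive gap coming from the fact that $x \mapsto x\log_p x$ is not linear, and a short convexity computation shows this gap already forces $\epsilon' \gtrsim 1$, contradicting $\epsilon \le c_0/\ln(1/p)$; so we may assume $\mu_p(f)$ is within a constant factor of $p^d$ for an integer $d = \lceil \log_p \mu_p(f)\rceil$, and the target subcube $S$ will have codimension $d$.

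The heart of the argument is a \emph{tensorisation/discretisation} step converting $\mu_p$ into the uniform measure on a larger cube. Fix a large integer $N$ and approximate $p$ by $k/N$; then $\mu_p^{(n)}$ on $\{0,1\}^n$ is close (in a sense strong enough to preserve both measure and boundary-size up to lower-order terms) to the uniform measure on $\{0,1\}^{nN}$ restricted via the map sending a block of $N$ coordinates to $1$ iff at least... — more cleanly, I would instead run the \emph{direct inductive argument} of \cite{KK06} but keep the slack. Concretely: pick a coordinate $i$, split $f = (f_0, f_1)$ according to $x_i$, with $\mu_p(f) = (1-p)\mu_p(f_0) + p\,\mu_p(f_1)$ and $p I^p[f] = (1-p) \cdot pI^p[f_0] + p \cdot pI^p[f_1] + p(1-p)|\mu_p(f_1)-\mu_p(f_0)|$; feed each of $f_0,f_1$ into the induction. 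The weak inequality is saturated only when, at \emph{every} step, one of the restrictions is identically $0$ or $1$ and the other replicates the subcube; so a set with small total excess $\epsilon\mu_p(f)$ must, at all but a $O(\epsilon'/\log(1/\epsilon'))$-fraction (measured by $\mu_p$) of the branches, look like a subcube. Summing the per-branch deficits and using the super-additivity of $\delta \mapsto \delta\log(1/\delta)$ (exactly as in the unbiased recursion of \cite{Ellis}) yields a single codimension-$d$ subcube $S$ with $\mu_p(f \Delta 1_S) \le C_1 \frac{\epsilon'}{\ln(1/\epsilon')}\mu_p(f)$. The role of the factor $\ln(1/p)$ is that each ``biased coordinate'' carries only $\sim \ln(1/p)$-worth, so measuring the deficit in natural log (hence $\epsilon'$ rather than $\epsilon$) is what makes the recursion self-similar.

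\textbf{Main obstacle.} The delicate point is the base of the recursion and the ``one-sided'' nature of the bias when $p < 1/2$: unlike the symmetric case, the two restrictions $f_0$ (weight $1-p$) and $f_1$ (weight $p$) play very different roles, and a naive induction loses a factor of $\ln(1/p)$ at each of $d$ levels — which would be fatal since $d$ can be as large as $\Theta(n)$. To avoid this one must argue that the deficit is \emph{concentrated}: at most one ``wrong'' split occurs along the way, and after conditioning on the subcube's free coordinates the residual function is genuinely close to constant, so the errors do not compound multiplicatively but add. Making this precise — i.e. proving a ``deficit does not spread'' lemma that upgrades the per-level estimate to a global one with the correct $\epsilon'/\log(1/\epsilon')$ rate rather than $(\epsilon')^{\Omega(1)}$ or $d\cdot\epsilon'$ — is where the bulk of the technical work lies, and it is exactly here that one invokes Theorem~\ref{thm:e} as a black box once the problem has been localised to a bounded number of coordinates (equivalently, to weights bounded away from $0$), the remaining coordinates being handled by the crude but lossless bound from Theorem~\ref{thm:edge-iso-biased} applied to the restrictions.
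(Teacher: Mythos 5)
Your proposal has the right outer shell (induction on $n$, splitting $f$ on a coordinate, using the slack in the weak inequality from Theorem~\ref{thm:edge-iso-biased}), but the steps that actually make such an induction close are missing, and the two reductions you lean on are either abandoned or would not work. The tensorisation of $\mu_p$ into a uniform cube, which you start and then drop, cannot be used to invoke Theorem~\ref{thm:e} as a black box: the image of $f$ under a block encoding is a union of block-cylinders rather than an arbitrary subset of the larger cube, Theorem~\ref{thm:e} applies only to sets of size exactly a power of $2$ under the uniform measure, and a subcube of the big cube produced by that theorem need not pull back to a subcube of $\{0,1\}^n$. Likewise, the preliminary reduction to $\log_p\mu_p(f)$ being close to an integer is neither carried out (being ``within a constant factor of $p^d$'' is far too weak --- you would need closeness up to a factor $1+O(\epsilon'/\ln(1/\epsilon'))$) nor needed: the paper's induction runs for arbitrary $\mu$.

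The three ingredients you are missing are exactly where the work lies. First, you must \emph{find} the coordinate to split on: the paper proves (Claims~\ref{claim:expectation-constraint-1} and \ref{claim:influential-coordinate}--\ref{claim:influential-coordinate-2}) that unless $\mu$ is already within $O(\epsilon'/\ln(1/\epsilon'))$ of $1$ (in which case $S=\{0,1\}^n$ works), there is a coordinate $i$ with $pI_i[f]\geq(1-O(\epsilon'))\mu$ and $\min\{\mu_i^-,\mu_i^+\}=O(\epsilon'\mu)$; this step is itself an induction and uses a dichotomy (``$\mu$ is very close to $1$ or bounded away from $1$'') obtained by applying the weak inequality to the complement of $f$. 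Second, and most importantly, the bound $\min\{\mu_i^-,\mu_i^+\}=O(\epsilon'\mu)$ must be \emph{bootstrapped} to $O\bigl(\tfrac{(\epsilon-\epsilon_i^+)\ln(1/p)}{\ln(1/((\epsilon-\epsilon_i^+)\ln(1/p)))}\bigr)\mu$ (Claims~\ref{claim:bootstrapping-1}--\ref{claim:bootstrapping-mon}); without this there is no mechanism producing the $1/\ln(1/\epsilon')$ gain, and a naive recursion only gives $O(\epsilon')$. Third, the induction is applied to \emph{one} restriction only ($f_{i\to1}$, whose deficit $\epsilon_i^+$ is shown to be at most $\epsilon$ via the identity $\mu\epsilon=p\mu_i^+\epsilon_i^++(1-p)\mu_i^-\epsilon_i^-+(F-G)+p(I_i-|\mu_i^+-\mu_i^-|)\geq p\mu_i^+\epsilon_i^+$), and the reason errors add rather than compound over the $d$ levels is precisely that the discarded mass at level $i$ is charged to the \emph{consumed} slack $\epsilon-\epsilon_i^+$, so the total is $\bigl(C_5(\epsilon-\epsilon_i^+)+C_1\epsilon_i^+\bigr)\mu\ln(1/p)/\ln(1/\epsilon')\leq C_1\epsilon'\mu/\ln(1/\epsilon')$. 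Your ``deficit does not spread'' lemma is the right thing to want, but you propose to obtain it by localising to boundedly many coordinates and citing Theorem~\ref{thm:e}, whereas the localisation is the theorem; as written the proposal has a genuine gap at each of these three points.
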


\noindent If we assume further that $f$ is monotone increasing, then the above theorem can be extended to $p>1/2$.
\begin{thm}
\label{thm:mon-iso-stability} For any $\eta>0$, there exist $C_{1}=C_{1}(\eta)$,
$c_{0}=c_{0}(\eta)>0$ such that the following holds. Let $0<p\leq1-\eta$,
and let $\epsilon\leq c_{0}/\ln(1/p)$. Let $f\colon\left\{ 0,1\right\} ^{n}\to\left\{ 0,1\right\} $
be a monotone increasing Boolean function such that
\[
pI^{p}[f]\leq\mu_{p}(f)\left(\log_{p}(\mu_{p}(f))+\epsilon\right).
\]
Then there exists a monotone increasing subcube $S\subset\{0,1\}^{n}$ such that
\begin{equation}
\mu_{p}(f\Delta1_{S})\leq C_1 \frac{\epsilon \ln(1/p)}{\ln\left(1/(\epsilon \ln(1/p)\right)}\mu_{p}(f).\label{eq:conc}
\end{equation}
\end{thm}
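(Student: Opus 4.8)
The plan is to deduce Theorem~\ref{thm:mon-iso-stability} from Theorem~\ref{thm:skewed-iso-stability} by a reduction that trades a monotone function at bias $p > 1/2$ for a monotone function at some bias $q \leq 1/2$, at the cost of a constant factor depending on $\eta$. The most natural device is the following: given a monotone increasing $f\colon\{0,1\}^n\to\{0,1\}$ and a target bias $q \leq 1/2$, one considers the monotone increasing function $g\colon\{0,1\}^{N}\to\{0,1\}$ on a larger cube obtained by replacing each coordinate $i$ by a block of $k$ coordinates and setting the $i$th ``virtual'' bit to $1$ iff at least one bit in the block is $1$. Choosing $q$ so that $1-(1-q)^k = p$, i.e.\ $k \approx \ln(1/(1-p))/\ln(1/(1-q))$, one has $\mu_q^{(N)}(g) = \mu_p^{(n)}(f)$. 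The point of this ``tensor-power/OR'' trick is that it is essentially isoperimetry-preserving: a short computation with the chain rule for influences shows $q I^q[g]$ and $p I^p[f]$ differ by a bounded factor (the factor being controlled because $p \leq 1-\eta$ keeps $k$ bounded and keeps $\ln(1/q)$ comparable to $\ln(1/p)$). Likewise $\log_q(\mu_q(g)) = \log_q(\mu_p(f))$ differs from $\log_p(\mu_p(f))$ by the factor $\ln(1/p)/\ln(1/q)$. Hence the hypothesis $pI^p[f]\leq \mu_p(f)(\log_p\mu_p(f)+\epsilon)$ translates into $qI^q[g]\leq \mu_q(g)(\log_q\mu_q(g)+\epsilon'')$ for some $\epsilon''$ that is $\epsilon$ times a bounded function of $\eta$, and in particular $\epsilon'' \ln(1/q)$ is within a bounded factor of $\epsilon':=\epsilon\ln(1/p)$.

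Concretely, here are the steps I would carry out. First, fix $\eta$; I may assume $p > 1/2$ (for $p\leq 1/2$ the statement is Theorem~\ref{thm:skewed-iso-stability} together with Remark~\ref{remark:non-mono}, which already gives a \emph{monotone} subcube once $\epsilon$ is small enough), and choose $q = q(\eta) \leq 1/2$ together with the block size $k=k(\eta)\in\mathbb{N}$ as above (rounding forces $\mu_q^{(N)}(g)$ to be only approximately $\mu_p^{(n)}(f)$, which is harmless). Second, verify the two inequalities relating influences and measures under the OR-blow-up; the influence computation is the one place a genuine (but routine) calculation is needed, using that within a block the ``at least one $1$'' event has a clean derivative structure. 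Third, apply Theorem~\ref{thm:skewed-iso-stability} to $g$ at bias $q$ to get a subcube $T\subset\{0,1\}^N$ with $\mu_q(g\,\Delta\,1_T)\leq C_1(q)\,\epsilon''\ln(1/q)/\ln(1/(\epsilon''\ln(1/q)))\cdot\mu_q(g)$, and, since $\epsilon$ is small (so $q < 1/2 - C\epsilon''$), invoke Remark~\ref{remark:non-mono} to conclude $T$ is monotone increasing. Fourth --- the genuinely delicate step --- descend from the monotone subcube $T$ in the blown-up cube back to a monotone subcube $S$ in the original cube with a comparable symmetric difference. A monotone subcube $T$ is of the form $\{y : y_j = 1\ \forall j\in J\}$ for some $J\subseteq[N]$; I would show that, because $g$ is itself a blow-up and $\mu_q(g\,\Delta\,1_T)$ is small, the set $J$ must be (up to small error) a union of whole blocks, so that $1_T$ corresponds to a monotone subcube $S=\{x : x_i=1\ \forall i\in I\}$ of the original cube with $\mu_p(f\,\Delta\,1_S)\leq C_1'(\eta)\,\mu_q(g\,\Delta\,1_T)/\mu_q(g)\cdot\mu_p(f)$. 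Combining with the bound from step three, and absorbing all $\eta$-dependent constants into a single $C_1(\eta)$ (and shrinking $c_0$ to $c_0(\eta)$ so that $\epsilon\leq c_0/\ln(1/p)$ makes $\epsilon''\ln(1/q)$ small enough for the $x/\ln(1/x)$ bound to behave monotonically), yields \eqref{eq:conc}.

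The main obstacle is the ``union of whole blocks'' argument in step four: a priori the approximating subcube $T$ produced by Theorem~\ref{thm:skewed-iso-stability} need not respect the block structure at all, and one must argue that respecting it is forced. The mechanism I expect to use is a \emph{stability-of-stability} observation: if $J$ were to contain some but not all coordinates of a block, one could either add the remaining coordinates of that block or delete the ones present, and in at least one of these directions the $\mu_q$-measure of the symmetric difference with $g$ strictly decreases (because $g$ does not ``see'' individual coordinates within a block, only the block's OR), contradicting that $T$ is already within $O(\epsilon'/\ln(1/\epsilon'))$ of optimal --- or, more robustly, one shows directly that the block-respecting subcube closest to $T$ is at distance $O(\mu_q(g\,\Delta\,1_T))$ from $g$. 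Making this quantitative, with constants that blow up only polynomially in $1/\eta$, is the crux. A secondary nuisance is the rounding of $k$: since $1-(1-q)^k=p$ has no integer solution in general, one either picks the $q\leq 1/2$ making it exact for the integer $k=\lceil\ln(1/(1-p))/\ln 2\rceil$, or tolerates $\mu_q(g)\neq\mu_p(f)$ and carries the resulting multiplicative slack (bounded in terms of $\eta$) through the estimates; either way it only affects constants.
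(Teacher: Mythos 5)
Your reduction breaks at step two: the OR blow-up does \emph{not} approximately preserve the isoperimetric deficit, so $g$ need not satisfy the hypothesis of Theorem~\ref{thm:skewed-iso-stability} with $\epsilon''=O_\eta(\epsilon)$. Take $f=1_{\{x_i=1\ \forall i\in[t]\}}$, a monotone subcube, which satisfies your hypothesis with $\epsilon=0$. For $j$ in the block $B_i$ replacing coordinate $i$, flipping $y_j$ changes the OR of $B_i$ only when all other bits of $B_i$ vanish, so $I^q_j[g]=(1-q)^{k-1}I^p_i[f]$ and hence $qI^q[g]=qk(1-q)^{k-1}I^p[f]$. For the subcube this gives $qI^q[g]=t\,qk(1-q)^{k-1}p^{t-1}\mu$ with $\mu=p^t$, whereas $\mu\log_q\mu=t\mu\ln(1/p)/\ln(1/q)$, so the transferred deficit is $\epsilon''=t\left(qk(1-q)^{k-1}/p-\ln(1/p)/\ln(1/q)\right)$. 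The bracket is a nonzero constant in general (e.g.\ $p=3/4$, $k=2$, $q=1/2$ gives $2/3-\log_2(4/3)\approx 0.25$), so $\epsilon''$ is already a constant for $t=1$ (violating $\epsilon''\le c_0/\ln(1/q)$) and grows linearly in $t=\log_p(1/\mu)$. This is not an artifact of the computation: $g$ is an AND of ORs, which really is at relative distance $\Omega_\eta(1)$ from every subcube of $\{0,1\}^N$ (already $y_1\vee y_2$ at $q=1/2$ has $\mu_q(g\,\Delta\,1_S)\ge\tfrac{1}{3}\mu_q(g)$ for every subcube $S$), so no stability theorem applied to $g$ could return what you need. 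The obstruction is structural: any coordinate-local gadget would have to carry the $p$-biased extremal families (ANDs of $t$ coordinates, measure $p^t$) to within $o(1)$ relative distance of $q$-biased subcubes of the same measure, i.e.\ ANDs of $t\log_q p$ coordinates, and $\log_q p$ is not an integer; the deficit therefore necessarily inflates by an amount of order $\log(1/\mu)$ rather than staying $O(\epsilon)$. (A secondary gap: even your base case $1/2-C\epsilon<p\le 1/2$ is not covered, since Remark~\ref{remark:non-mono} only guarantees a monotone subcube for $p\le 1/2-C\epsilon$, and upgrading the subcube to a monotone one for monotone $f$ near $p=1/2$ requires its own argument.)

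For comparison, the paper does not deduce Theorem~\ref{thm:mon-iso-stability} from Theorem~\ref{thm:skewed-iso-stability} at all. It reruns the same five-step induction on $n$ directly at bias $p$, replacing Claims~\ref{claim:2-cases-constant-p}, \ref{claim:expectation-constraint-1}, \ref{claim:influential-coordinate}/\ref{claim:influential-coordinate-2} and \ref{claim:bootstrapping-1} by their monotone counterparts (Claims~\ref{claim:mon-2-cases}, \ref{claim:expectation-constraint}, \ref{claim:influential-coordinate-3} and \ref{claim:bootstrapping-mon}). Monotonicity enters by forcing $\mu_i^-\le\mu_i^+$ for every $i$, so only the comparison $F\ge G$ of Lemma~\ref{Lem:ind-step} is ever needed; the restriction $p\le\tfrac{1}{2}$ in the general case stems entirely from the $H\ge G$ comparison (via $K(p)\ge 0$), which simply never arises for monotone $f$. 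The role your blow-up was meant to play -- handling $p$ close to $1$ -- is instead played by Claim~\ref{claim:expectation-constraint}, which applies Theorem~\ref{thm:edge-iso-biased} to the dual function $f^\ast(x)=1-f(\bar{x})$ at bias $1-p$.
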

\noindent (Note subset $S \subset \{0,1\}^n$ is said to be {\em monotone increasing} if its indicator function is monotone increasing. The {\em indicator function} of $S \subset \{0,1\}^n$ is the Boolean function on $\{0,1\}^n$ taking the value $1$ on $S$ and $0$ outside $S$.)

As we show in Section~\ref{sec:examples}, Theorems \ref{thm:skewed-iso-stability} and \ref{thm:mon-iso-stability} are sharp, up to the values of the constants $c_0,C_1$, and this remains the case even if the subcube in the conclusion of Theorem \ref{thm:mon-iso-stability} is allowed to be non-monotone. Moreover, the dependence of $c_0,C_1$ on $\eta$ in Theorem \ref{thm:mon-iso-stability} cannot be removed --- though, for the sake of brevity, we do not attempt to optimise the dependence of these constants on $\eta$ in our proof.

The proofs of Theorems \ref{thm:skewed-iso-stability} and \ref{thm:mon-iso-stability} use induction on $n$, in a similar way to the proof of Theorem \ref{thm:e} in~\cite{Ellis}, but unlike in previous works, they do not use any Fourier-theoretic tools, relying only upon `elementary' (though intricate) combinatorial and analytic arguments.

\medskip

Theorems \ref{thm:skewed-iso-stability} and \ref{thm:mon-iso-stability} are crucial tools in a recent work of the authors~\cite{EKL16+}, which establishes
a general method for leveraging Erd\H{o}s-Ko-Rado type results in extremal combinatorics into strong stability versions,
without going into the proofs of the original results. This method is used in \cite{EKL16+} to obtain sharp (or almost-sharp) stability versions of the Erd\H{o}s-Ko-Rado theorem itself \cite{EKR}, of the seminal `complete intersection theorem' of Ahlswede and Khachatrian \cite{AK}, of Frankl's recent result on the Erd\H{o}s matching conjecture \cite{Frankl13}, of the Ellis-Filmus-Friedgut proof of the Simonovits-S\'{o}s conjecture \cite{EFF12}, and of various Erd\H{o}s-Ko-Rado type results on $r$-wise (cross)-$t$-intersecting families.

Theorem \ref{thm:mon-iso-stability} is also used in \cite{unions} by the first and last authors to obtain sharp upper bounds on the size of the union of several intersecting families of $k$-element subsets of $[n]$, where $k \leq (1/2-o(1))n$, extending results of Frankl and F\"uredi \cite{ff}.

\subsection{A biased version of the `full' edge isoperimetric inequality for monotone increasing families}

While the generalization of the `weak' edge isoperimetric inequality (i.e., Corollary~\ref{cor:edge-iso}) to the biased measure has
been known for a long time, such a generalization of the `full' edge isoperimetric inequality (i.e., Theorem~\ref{thm:edge-iso}) was hitherto unknown. In his talk at the 7th European Congress of Mathematicians \cite{Kalai16}, Kalai asked whether there is a natural generalization of Theorem~\ref{thm:edge-iso} to the measure $\mu_p$ for $p<1/2$.

We answer Kalai's question in the affirmative by showing that the most natural such generalization does not hold for arbitrary families, but does hold (even for $p>1/2$) under the additional assumption that the family is monotone increasing. (We say a family $\f \subset \pn$ is {\em monotone increasing} if $(S \in \f,\ S \subset T) \Rightarrow T \in \f$.)

In order to present our result, we first define the appropriate generalization of lexicographic families for the biased-measure setting. Note that while in the uniform measure ($p = 1/2$) case, for any $\f \subset \p([n])$ there exists a
lexicographic family $\l \subset \pn$ with the same measure as $\f$, this does not hold in general for $p \neq 1/2$. However, the situation can be remedied by passing to subsets of the Cantor space $\p(\mathbb{N})$. We let $\Sigma$ be the $\sigma$-algebra on $\p(\mathbb{N})$ generated by $\cup_{n \in \mathbb{N}} \p([n])$, and for each $p \in (0,1)$, we let $\mu^{(\mathbb{N})}_p$ be the natural $p$-biased measure on $(\p(\mathbb{N}),\Sigma)$ (the unique measure that `projects' to the measure $\mu_p^{(n)}$ on $\pn$, for each $n \in \mathbb{N}$). By analogy with subsets of $[n]$, if $\f \in \Sigma$ and $i \in \mathbb{N}$ we define the {\em $i$th influence of $\f$ w.r.t. $\mu^{(\mathbb{N})}_p$ by}
 $$I^p_i[\f] := \Pr_{S \sim \mu^{(\mathbb{N})}_p}[\f \cap \{S,S \Delta \{i\}\}|=1]$$
 and the {\em total influence of $\f$ w.r.t. $\mu^{(\mathbb{N})}_p$} by $I^p[\f] = \sum_{i=1}^{\infty}I_i^p[\f]$.

Just as for subsets of $[n]$, the {\em lexicographic ordering on $\p(\mathbb{N})$} is defined by $S > T$ iff $\min(S \Delta T) \in S$. For each $\lambda \in [0,1]$, we let $\l_{\lambda} \subset \p(\mathbb{N})$ be the unique initial segment of the lexicographic ordering on $\p(\mathbb{N})$ with $\mu_{1/2}^{(\mathbb{N})}(\l_{\lambda}) = \lambda$. (It is easily checked that initial segments of the lexicographic ordering on $\p(\mathbb{N})$ are $\Sigma$-measurable.) Moreover, the function $f_p:\ \lambda \mapsto \mu^{(\mathbb{N})}_p(\l_{\lambda})$ is continuous and monotone increasing, for each $p \in (0,1)$, with $f_p(0)=0$ and $f_p(1)=1$. Hence, by the intermediate value theorem, for any $p \in (0,1)$ and any $x \in [0,1]$, there exists $\lambda \in [0,1]$ such that $\mu^{(\mathbb{N})}_p(\l_{\lambda})=x$. In particular, for each $n \in \mathbb{N}$ and each $\f \subset \pn$, there exists $\lambda \in [0,1]$ such that $\mu^{(\mathbb{N})}_p(\l_{\lambda})=\mu^{(n)}_p(\f)$, where $\mu_p^{(n)}$ denotes the $p$-biased measure on $\pn$. We prove this family $\l_{\lambda}$ has total influence no larger than that of $\f$:

\begin{thm}
\label{thm:Monotone}Let $p\in\left(0,1\right)$, and let $\f\subset\pn$
be a monotone increasing family. Let $\lambda \in [0,1]$ be such that $\mu^{(\mathbb{N})}_p \left(\f\right)=\mu^{(n)}_{p}\left(\l_{\lambda}\right)$. Then $I^{p}\left[\f\right]\ge I^{p}\left[\l_{\lambda}\right]$. (Here, $I^p[\f]$ is defined in terms of the $p$-biased measure on $\p([n])$, whereas $I^p[\l_{\lambda}]$ is defined in terms of the $p$-biased measure on $(\p(\mathbb{N}),\Sigma)$.)
\end{thm}
Our proof uses the Kruskal-Katona theorem~\cite{Katona66,Kruskal63}, the Margulis-Russo Lemma \cite{Margulis,Russo}, and some additional analytic and combinatorial arguments.

In fact, Theorem \ref{thm:edge-iso} (the `full' edge-isoperimetric inequality of Harper, Bernstein, Lindsey and Hart) follows quickly from Theorem \ref{thm:Monotone}, via a monotonization argument, so our proof of Theorem \ref{thm:Monotone} provides a new proof of Theorem \ref{thm:edge-iso}, via the
Kruskal-Katona theorem. This may be of independent interest, and may be somewhat surprising, as the Kruskal-Katona theorem is more immediately connected to the vertex-boundary of an increasing family, than to its edge-boundary.

\medskip

We remark that the assertion of Theorem \ref{thm:Monotone}
is false for arbitrary (i.e., non-monotone) functions, for each value of $p \neq 1/2$. Indeed, it is easy to check that for each $p \in (0,1) \setminus \{\tfrac{1}{2}\}$, the `antidictatorship' $\mathcal{A} = \{S \subset [n]:\ 1 \notin S\}$ has $I^p[\mathcal{A}] = 1 < I^p[\l_{\lambda}]$, where $\lambda$ is such that $\mu_p(\l_{\lambda}) = 1-p\ (= \mu_p(\mathcal{A}))$. (See Remark \ref{remark:anti}.)

\subsection{Organization of the paper}

In Section \ref{sec:prelim}, we outline some notation and present an inductive proof of Theorem \ref{thm:edge-iso-biased}, some of whose ideas and components we will use in the sequel. In Section \ref{sec:main} (the longest part of the paper), we prove Theorems~\ref{thm:skewed-iso-stability} and~\ref{thm:mon-iso-stability}. In Section \ref{sec:examples}, we give examples showing that Theorems \ref{thm:skewed-iso-stability} and~\ref{thm:mon-iso-stability} are sharp (in a certain sense). In Section \ref{sec:KK}, we prove Theorem \ref{thm:Monotone} and show how to use it to deduce Theorem \ref{thm:edge-iso}. We conclude the paper with some open problems in Section~\ref{sec:open}.

\section{An inductive proof of Theorem~\ref{thm:edge-iso-biased}}
\label{sec:prelim}

In this section, we outline some notation and terminology, and present a simple inductive proof of Theorem~\ref{thm:edge-iso-biased}; components and ideas from this proof will be used in the proofs of Theorems \ref{thm:skewed-iso-stability} and \ref{thm:mon-iso-stability}.

\subsection{Notation and terminology}

When the `bias' $p$ (of the measure $\mu_p$) is clear from the context (including throughout Sections~\ref{sec:prelim} and~\ref{sec:main}), we will sometimes omit it from our notation, i.e. we will sometimes write $\mu(f) := \mu_p(f)$ and $I[f]:=I^{p}[f]$. Moreover, when the Boolean function $f$ is clear from the context, we will sometimes omit it from our notation, i.e. we will sometimes write $\mu := \mu(f)$, $I := I[f]$ and $I_i : = I_i[f]$. If $S \subset \{0,1\}^n$, we write $1_{S}$ for its indicator function, i.e. the Boolean function on $\{0,1\}^n$ taking the value $1$ on $S$ and $0$ outside $S$. A {\em dictatorship} is a Boolean function $f:\{0,1\}^n \to \{0,1\}$ of the form $f = 1_{\{x_j=1\}}$ for some $j \in [n]$; an {\em antidictatorship} is one of the form $f = 1_{\{x_j=0\}}$. Abusing notation slightly, we will sometimes identify a family $\mathcal{F} \subset \mathcal{P}([n])$ with the corresponding indicator function $1_{\{x \in \{0,1\}^n:\ \{i \in [n]:\ x_i=1\} \in \mathcal{F}\}}$.

A {\em subcube} of $\{0,1\}^n$ is a set of the form $\{x \in \{0,1\}^n:\ x_i = a_i\ \forall i \in F\}$, where $F \subset [n]$ and $a_i \in \{0,1\}$ for all $i \in F$; $F$ is called the set of {\em fixed coordinates} of the subcube.

We use the convention $0\log_{p}(0)=0$ (for all $p \in (0,1)$); this turns $x\mapsto x\log_{p}(x)$ into a continuous function on $[0,1]$. If $S$ and $T$ are sets, we write $S \subset T$ if $S$ is a (not necessarily proper) subset of $T$.

If $f:\{0,1\}^n \to \{0,1\}$ and $i \in [n]$, we define the function $f_{i\to0}:\{0,1\}^{[n]\setminus\{i\}}\to\{0,1\}$ by $f_{i\mapsto0}(y)=f(x)$, where $x_{i}=0$ and $x_{j}=y_{j}$ for all $j\in[n]\setminus\{i\}$. In other words, $f_{i\to0}$ is the restriction of $f$ to the lower half-cube $\{x\in\{0,1\}^{n}:x_{i}=0\}$. We define $f_{i\to1}$ similarly. For brevity, we will often write
\begin{align*}
\mu_i^- & = \mu_{i}^{-}(f) :=\mu_{p}(f_{i\to0}),\\
\mu_i^+ & = \mu_{i}^{+}(f)  :=\mu_{p}(f_{i\to1}),\\
I_i^- & = I_{i}^{-}[f] :=I^{p}[f_{i\to0}],\\
I_i^+ & = I_{i}^{+}[f] :=I^{p}[f_{i\to1}].
\end{align*}
Note that
\begin{equation}
p\mu_{i}^{+}(f)+(1-p)\mu_{i}^{-}(f)=\mu(f) \label{eq:basic1}
\end{equation}
and that
\begin{equation}
I\left[f\right]=I_{i}\left[f\right]+pI_{i}^{+}\left[f\right]+\left(1-p\right)I_{i}^{-}\left[f\right].\label{eq:Basic 2}
\end{equation}

\subsection{A proof of Theorem~\ref{thm:edge-iso-biased}}

The proof uses induction on $n$ together with equations (\ref{eq:basic1}) and (\ref{eq:Basic 2}), and the following technical lemma.
\begin{lem}
\label{Lem:ind-step}Let $p\in\left(0,1\right)$, and let $F,G,H\colon\left[0,1\right]\times\left[0,1\right]\to\left[0,\infty\right)$
be the functions defined by
\begin{align*}
F\left(x,y\right) & =px\log_{p}x+\left(1-p\right)y\log_{p}y+px-py,\\
G\left(x,y\right) & =\left(px+\left(1-p\right)y\right)\log_{p}\left(\left(px+\left(1-p\right)y\right)\right),\\
H\left(x,y\right) & =px\log_{p}x+\left(1-p\right)y\log_{p}y+py-px.
\end{align*}

\begin{enumerate}
\item If $x\ge y\ge0$, then $F\left(x,y\right)\ge G\left(x,y\right)$.
\item If $y\ge x\ge0$ and $p\le\frac{1}{2}$, then $H\left(x,y\right)\ge G\left(x,y\right)$.
\end{enumerate}
\end{lem}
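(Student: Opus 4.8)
The plan is to reduce both inequalities to one-variable statements by homogeneity-type manipulations, and then to verify them by elementary calculus (monotonicity in one of the variables, plus an endpoint check). Throughout write $L(t)=t\log_p t$ for $t\in[0,1]$; note $L$ is continuous on $[0,1]$ with $L(0)=0$, and since $0<p<1$ we have $\log_p t = \ln t/\ln p \le 0$ for $t\le 1$, so $L(t)\le 0$, with $L$ convex on $[0,1]$ (as $L''(t)=1/(t\ln p)<0$? — careful: $\ln p<0$, so actually $L$ is \emph{concave}; the relevant convexity is that of $t\mapsto -L(t)=t\log_{1/p}t$, which is convex). I will use the convexity of $\phi(t):=t\log_{1/p}t$ freely. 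Rewriting everything in terms of $\phi$: part~(1) becomes $p\phi(x)+(1-p)\phi(y) - \phi(px+(1-p)y) \le p(x-y)$ for $x\ge y$, and part~(2) becomes $p\phi(x)+(1-p)\phi(y)-\phi(px+(1-p)y)\le p(y-x)$ for $y\ge x$ and $p\le 1/2$. The left-hand side in both cases is exactly the "Jensen gap" $J(x,y):=p\phi(x)+(1-p)\phi(y)-\phi(px+(1-p)y)\ge 0$, so both parts assert $J(x,y)\le p|x-y|$, but with the caveat that in part~(1) this must hold for \emph{all} $p$ whereas in part~(2) only for $p\le 1/2$ — this asymmetry is the crux and I expect it to be the main obstacle.

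First I would handle part~(1). Fix $y\in[0,1]$ and consider $g(x):=F(x,y)-G(x,y)$ as a function of $x\in[y,1]$. At $x=y$ one computes $g(y)=pL(y)+(1-p)L(y)-L(y)=0$, and the "$px-py$" terms vanish too, so $g(y)=0$. It therefore suffices to show $g'(x)\ge 0$ for $x\ge y$. Differentiating, $\partial_x F = p\log_p x + p/\ln p + p$ and $\partial_x G = p\log_p(px+(1-p)y) + p/\ln p$, so $g'(x) = p\big(\log_p x - \log_p(px+(1-p)y)\big) + p = p\cdot\frac{\ln\!\big(x/(px+(1-p)y)\big)}{\ln p} + p = p\Big(1 - \log_{1/p}\tfrac{x}{px+(1-p)y}\Big)$. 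Since $x\ge y$ we have $px+(1-p)y\le x$, hence $x/(px+(1-p)y)\ge 1$, hence $\log_{1/p}$ of it is $\ge 0$; but we need it to be $\le 1$, i.e. $x/(px+(1-p)y)\le 1/p$, i.e. $px\le px+(1-p)y$, i.e. $(1-p)y\ge 0$ — true. Hence $g'(x)\ge 0$ on $[y,1]$ and part~(1) follows. (One should double-check the boundary case $y=0$ and the differentiability at $x=0$ separately, using $L(0)=0$ and continuity.)

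For part~(2), fix $x\in[0,1]$ and set $h(y):=H(x,y)-G(x,y)$ for $y\in[x,1]$; again $h(x)=0$, so it suffices that $h'(y)\ge0$ on $[x,1]$, \emph{using} $p\le 1/2$. Here $\partial_y H = (1-p)\log_p y + (1-p)/\ln p + p$ and $\partial_y G = (1-p)\log_p(px+(1-p)y) + (1-p)/\ln p$, giving $h'(y) = (1-p)\log_p\!\frac{y}{px+(1-p)y} + p = p - (1-p)\log_{1/p}\!\frac{y}{px+(1-p)y}$. Since $y\ge x$, the fraction $y/(px+(1-p)y)$ lies in $[1, 1/(1-p)]$ (the upper end at $x=0$), so $\log_{1/p}$ of it lies in $[0,\ \log_{1/p}(1/(1-p))]$, and thus $h'(y)\ge p - (1-p)\log_{1/p}\tfrac{1}{1-p}$. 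The inequality $h'(y)\ge 0$ therefore follows from the single-variable claim
\[
p\ln(1/p) \ge (1-p)\ln\!\big(1/(1-p)\big)\qquad\text{for } 0<p\le \tfrac12,
\]
i.e. $\phi(p)\le \phi(1-p)$ with $\phi(t)=t\log_{1/p}t$ — wait, cleaner: it is $-p\ln p \le -(1-p)\ln(1-p)$, i.e. the binary entropy summand $p\mapsto -p\ln p$ evaluated at $p$ is at most that at $1-p$ when $p\le 1/2$; this is immediate since $-t\ln t$ is increasing on $[0,1/e]$ and more carefully one just notes $t\mapsto -t\ln t$ is increasing on $[0,1/e]$ and a direct comparison of $-p\ln p$ vs $-(1-p)\ln(1-p)$ on $(0,1/2]$ (e.g. by showing the difference is monotone, or that $-t\ln t$ at the smaller argument among $\{p,1-p\}$ minus at the larger is $\le 0$) handles $p\in(1/e,1/2]$. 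I expect this last elementary comparison, together with pinning down the behaviour at the endpoints $p\to 0$, $y\to 0$, $x\to 0$ where $L$ is only continuous (not differentiable), to be the only genuinely fiddly points; everything else is the two "value-zero at the diagonal plus nonnegative derivative" arguments above. Finally, I would remark that the hypothesis $p\le 1/2$ is genuinely needed in part~(2): for $p>1/2$ the bound $h'(y)\ge p-(1-p)\log_{1/p}\tfrac1{1-p}$ can go negative near $y=x$ small, matching the fact (noted in Remark~\ref{remark:non-mono}) that the biased isoperimetric inequality itself fails for general sets when $p>1/2$.
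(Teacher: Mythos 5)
Your proposal is correct and follows essentially the same route as the paper: both proofs observe that $F$, $G$, $H$ agree on the diagonal $x=y$, compare $\partial_x F$ with $\partial_x G$ (trivially, since $px\le px+(1-p)y$) for part (1), compare $\partial_y H$ with $\partial_y G$ for part (2), and reduce the latter to the single-variable inequality $-p\ln p\ge -(1-p)\ln(1-p)$ for $p\in(0,\tfrac12]$ (the paper's quantity $K(p)\ge 0$).

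One caution on the only step you left unfinished: of your two suggested ways to verify that inequality, ``showing the difference is monotone'' does not work. Setting $\alpha(p)=-p\ln p+(1-p)\ln(1-p)$, one has $\alpha'(p)=-\ln(p(1-p))-2$, which changes sign at $p(1-p)=e^{-2}$ (i.e.\ $p\approx 0.16$), so $\alpha$ increases and then decreases on $(0,\tfrac12)$; it is not monotone. The correct finish is the endpoint argument: $\alpha(0)=\alpha(1/2)=0$ and $\alpha$ has a single interior critical point (or, as the paper argues, $\alpha'''>0$ on $(0,1/2)$ so $\alpha$ has no inflexion point and hence at most two zeros there), whence $\alpha\ge 0$ on $[0,\tfrac12]$. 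Also note the sign slip in your prose (``it is $-p\ln p\le -(1-p)\ln(1-p)$'' should read $\ge$); your displayed inequality is the correct one.
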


\begin{proof}[Proof of Lemma \ref{Lem:ind-step}]
Clearly, for all $y \geq 0$ we have $F\left(y,y\right)=G\left(y,y\right)=H\left(y,y\right)$, and for all $x,y\geq0$, we have
\begin{align}
\frac{\partial F}{\partial x} & =p\log_{p}x+\frac{p}{\ln p}+p=p\log_{p}(px)+\frac{p}{\ln p},\nonumber \\
\frac{\partial G}{\partial x} & =p\log_{p}\left(px+\left(1-p\right)y\right)+\frac{p}{\ln p},\label{eq:partial-devs} \nonumber\\
\frac{\partial H}{\partial y} & =\left(1-p\right)\log_{p}y+p+\frac{1-p}{\ln p}\\
& =\left(1-p\right)\log_{p}(\left(1-p\right)y)-\left(1-p\right)\log_{p}\left(1-p\right)+p + \frac{1-p}{\ln p}\nonumber, \\
\frac{\partial G}{\partial y} & =\left(1-p\right)\log_{p}\left(px+\left(1-p\right)y\right)+\frac{1-p}{\ln p}.\nonumber
\end{align}
Clearly, we have $\frac{\partial F(x,y)}{\partial x}\geq\frac{\partial G(x,y)}{\partial x}$
for all $x,y\ge0$, and therefore $F\left(x,y\right)\ge G\left(x,y\right)$
for all $x\ge y\ge0$, proving (1). We assert that similarly, $\frac{\partial H(x,y)}{\partial y}\ge\frac{\partial G(x,y)}{\partial y}$
for all $x,y\geq0$, if $p \leq 1/2$. (This will imply (2).) Indeed,
\begin{align*}
\frac{\partial H}{\partial y} & =\left(1-p\right)\log_{p}(\left(1-p\right)y)-\left(1-p\right)\log_{p}\left(1-p\right)+p + \frac{1-p}{\ln p}\\
 & \ge\frac{\partial G}{\partial y}+p-\left(1-p\right)\log_{p}\left(1-p\right).
\end{align*}
Hence, it suffices to prove the following.
\begin{claim}
\label{claim on alpha(p)}Define $K:(0,1) \to \mathbb{R};\ K(p)=p-\left(1-p\right)\log_{p}\left(1-p\right)$.
Then $K\left(p\right)>0$ for all $p\in(0,\frac{1}{2})$, $K(1/2)=0$ and $K(p) < 0$ for all $p \in (1/2,1)$. \end{claim}
\begin{proof}[Proof of Claim \ref{claim on alpha(p)}]
Clearly, we have $K(1/2)=0$. It suffices to show that
$$\alpha(p):=K\left(p\right)\ln(1/p) = -p\ln p+(1-p)\ln(1-p)$$
is positive for all $p\in(0,1/2)$, since $\alpha(1-p)=-\alpha(p)$ for all $p \in (0,1)$. Note that $\alpha(x) \to 0$ as $x \to 0$ and that $\alpha(x) \to 0$ as $x \to 1$, so we may extend $\alpha$ to a continuous function on $[0,1]$ by defining $\alpha(0) = \alpha(1)=0$.

We have
$$\alpha'(x) = -\ln x - \ln(1-x) - 2.$$

Suppose for a contradiction that $\alpha$ has a zero in $(0,1/2)$. Then, since $0$ and $1/2$ are also zeros of $\alpha$, $\alpha$ would have at least two stationary points in $(0,1/2)$. This cannot occur, because $\alpha'(x)=0$ implies $x(1-x)=e^{-2}$, which has at most one solution in $(0,1/2)$, since if $x_0$ is a solution then $1-x_0$ is also solution, and any quadratic equation has at most two solutions. Hence, $\alpha$ has no zeros in $(0,1/2)$. Since $\alpha'(x) \to \infty$ as $x \to 0$, we must have $\alpha(x) >0$ for all $x \in (0,1/2)$, as required.
\end{proof}
This completes the proof of Lemma~\ref{Lem:ind-step}.
\end{proof}

We can now prove Theorem \ref{thm:edge-iso-biased}.

\begin{proof}[Proof of Theorem~\ref{thm:edge-iso-biased}.]
It is easy to check that the theorem holds for $n=1$. Let $n \geq 2$, and suppose the statement of the theorem holds when $n$ is replaced by $n-1$. Let $f:\{0,1\}^n \to \{0,1\}$. Choose any $i \in [n]$. We split into two cases.
\begin{caseenv}
\item[Case (a)] $\mu_{i}^{-}\le\mu_{i}^{+}$.
\end{caseenv}

Applying the induction hypothesis to the functions $f_{i\to0}$
and $f_{i\to1}$, and using the fact that $I_{i}[f] \ge\mu_{i}^{+}-\mu_{i}^{-}$, we obtain
\begin{align*}
pI & =(1-p)pI_{i}^{-}[f]+p^{2}I_{i}^{+}[f]+pI_{i}[f]\\
 & \ge(1-p)\mu_{i}^{-} \log_{p}(\mu_{i}^{-}) +p\mu_{i}^{+} \log_{p}(\mu_{i}^{+})+p\left(\mu_{i}^{+}-\mu_{i}^{-}\right)\\
 & =F\left(\mu_i^{+},\mu_{i}^{-}\right)\ge G\left(\mu_{i}^{+},\mu_{i}^{-}\right)=\mu\log_{p}\left(\mu\right),
\end{align*}
 where $F$ and $G$ are as defined in Lemma \ref{Lem:ind-step}.
\begin{caseenv}
\item[Case (b)] $\mu_{i}^{-}\ge\mu_{i}^{+}$.
\end{caseenv}

The proof in this case is similar: applying the induction hypothesis to the functions $f_{i\to0}$
and $f_{i\to1}$, and using the fact that $I_{i}[f] \ge\mu_{i}^{-}-\mu_{i}^{+}$, we obtain
\begin{align*}
pI & =(1-p)pI_{i}^{-}[f]+p^{2}I_{i}^{+}[f]+pI_{i}[f]\\
 & \ge(1-p)\mu_{i}^{-}\log_{p}(\mu_{i}^{-})+p\mu_{i}^{+}\log_{p}(\mu_{i}^{+})+p\left(\mu_{i}^{-}-\mu_{i}^{+}\right)\\
 & =H\left(\mu^{+},\mu_{i}^{-}\right)\ge G\left(\mu_{i}^{+},\mu_{i}^{-}\right)=\mu\log_{p}\left(\mu\right),
\end{align*}
using the fact that $p \leq 1/2$.
\end{proof}

We remark that the above proof shows that if $f$ is monotone increasing, then the statement of Theorem~\ref{thm:edge-iso-biased} holds for all $p \in (0,1)$. (Indeed, if $f$ is monotone increasing, then $\mu_{i}^{-}\le\mu_{i}^{+}$ for all $i\in\left[n\right]$, so the assumption $p \leq 1/2$ is not required.)

\section{Proofs of the `biased' isoperimetric stability theorems}
\label{sec:main}

In this section, we prove Theorems \ref{thm:skewed-iso-stability} and \ref{thm:mon-iso-stability}. As the proofs of the two theorems follow the same strategy, we present them in parallel.

The proof of Theorem~\ref{thm:skewed-iso-stability} (and similarly, of Theorem~\ref{thm:mon-iso-stability}) consists of five steps. Assume that $f$ satisfies the assumptions of the theorem.
\begin{enumerate}
\item We show that for each $i \in [n]$, either $I_{i}[f]$ is small or else $\min\left\{ \mu_{i}^{-},\mu_{i}^{+}\right\}$
is `somewhat' small. In other words, the influences of $f$ are similar to the influences of a subcube.
\item We show that $\mu$ must be either very close to 1 or `fairly' small, i.e., bounded away from 1 by a constant. (In the proof of Theorem~\ref{thm:mon-iso-stability}, the constant may depend on $\eta$.)
\item We show that unless $\mu$ is very close to 1, there exists $i \in [n]$ such
that $I_{i}[f]$ is large. This implies that $\min\{\mu_{i}^{-},\mu_{i}^{+}\}$ is `somewhat' small.
\item We prove two `bootstrapping' lemmas saying that if $\mu_{i}^{-}$ is `somewhat'
small, then it must be `very' small, and that if $\mu_{i}^{+}$ is `somewhat'
small, then it must be `very' small. This implies that $f$ is `very' close to being contained in a dictatorship or an antidictatorship.
\item Finally, we prove each theorem by induction on $n$.
\end{enumerate}
\medskip

\noindent From now on, we let $f\colon\left\{ 0,1\right\} ^{n}\to\left\{ 0,1\right\}$ such that $pI^{p}[f]\leq\mu_{p}(f)(\log_{p}(\mu_{p}(f))+\epsilon)$. By reducing $\epsilon$ if necessary, we may assume that $pI^{p}[f] = \mu_{p}(f)(\log_{p}(\mu_{p}(f))+\epsilon)$, i.e., using the more compact notation outlined above, $pI[f] = \mu (\log_p(\mu) + \epsilon)$.

\subsection{Relations between the influences of $f$ and the influences of its
restrictions $f_{i\to0},f_{i\to1}$}
\noindent We define $\epsilon_{i}^{-},\epsilon_{i}^{+}$ by
\begin{align*}
pI_{i}^{-}=\mu_{i}^{-}\left(\log_{p}(\mu_{i}^{-})+\epsilon_{i}^{-}\right),\qquad pI_{i}^{+}=\mu_{i}^{+}\left(\log_{p}(\mu_{i}^{+})+\epsilon_{i}^{+}\right).
\end{align*}
Note that Theorem \ref{thm:edge-iso-biased} implies that $\epsilon_{i}^{-},\epsilon_{i}^{+}\geq 0$. We define the functions $F,G,H,K$ as in the proof of Theorem \ref{thm:edge-iso-biased}.

We would now like to express the fact that $I[f]$ is
small in terms of $\epsilon_{i}^{-},\epsilon_{i}^{+},\mu_{i}^{-},\mu_{i}^{+}$.
For each $i\in\left[n\right]$ such that $\mu_{i}^{-}\le\mu_{i}^{+}$, we have
\begin{align}
\begin{split}\mu(\log_{p}(\mu)+\epsilon) & =pI[f]=(1-p)pI_{i}^{-}+p^{2}I_{i}^{+}+pI_{i}[f]\\
 & =(1-p)\mu_{i}^{-}\left(\log_{p}(\mu_{i}^{-})+\epsilon_{i}^{-}\right)+p\mu_{i}^{+}\left(\log_{p}(\mu_{i}^{+})+\epsilon_{i}^{+}\right)\\
 & +p(\mu_{i}^{+}-\mu_{i}^{-})+p(I_{i}[f]-\mu_{i}^{+}+\mu_{i}^{-});
\end{split}
\label{eq:relation}
\end{align}
rearranging (\ref{eq:relation}) gives
\begin{align}
\epsilon_{i}': & =\mu\epsilon-p\mu_{i}^{+}\epsilon_{i}^{+}-\left(1-p\right)\mu_{i}^{-}\epsilon_{i}^{-}\nonumber \\
& = p\mu_i^+(\epsilon-\epsilon_i^+) + (1-p)\mu_i^-(\epsilon-\epsilon_i^-) \nonumber\\
 & = F\left(\mu_{i}^{+},\mu_{i}^{-}\right)-G\left(\mu_{i}^{+},\mu_{i}^{-}\right)+p\left(I_{i}[f]-(\mu_{i}^{+}-\mu_{i}^{-})\right).\label{eq:rearranged-1}
\end{align}
Similarly, for each $i\in [n]$ such that $\mu_{i}^{-}\ge\mu_{i}^{+}$, we have
\begin{align}
\epsilon_{i}' & :=\mu\epsilon-p\mu_{i}^{+}\epsilon_{i}^{+}-\left(1-p\right)\mu_{i}^{-}\epsilon_{i}^{-}\nonumber \\
& = p\mu_i^+(\epsilon-\epsilon_i^+) + (1-p)\mu_i^-(\epsilon-\epsilon_i^-) \nonumber\\
 & = H\left(\mu_{i}^{+},\mu_{i}^{-}\right)-G\left(\mu_{i}^{+},\mu_{i}^{-}\right)+p\left(I_{i}[f]-(\mu_{i}^{-}-\mu_{i}^{+})\right).\label{eq:rearranged-2}
\end{align}
This allows us to deduce two facts about the structure of $f$.
\begin{itemize}
\item By Lemma \ref{Lem:ind-step}, we have $\epsilon_{i}'\geq0$ for all $i \in [n]$. This implies that
either $\epsilon_{i}^{+}\le\epsilon$ or $\epsilon_{i}^{-}\le\epsilon$.
Together with the induction hypothesis, this will imply (in Section \ref{subsec:ind}) that either $f_{i\to0}$
or $f_{i\to1}$ is structurally close to a subcube.
\item $F\left(\mu_{i}^{+},\mu_{i}^{-}\right)-G\left(\mu_{i}^{+},\mu_{i}^{-}\right)$
(resp. $H\left(\mu_{i}^{+},\mu_{i}^{-}\right)-G\left(\mu_{i}^{+},\mu_{i}^{-}\right)$)
is small whenever $\mu_{i}^{+}\ge\mu_{i}^{-}$ (resp. $\mu_{i}^{-}\ge\mu_{i}^{+}$).
Note that the proof of Lemma \ref{Lem:ind-step} shows that whenever
$\mu_{i}^{+}\ge\mu_{i}^{-}$ (resp. $\mu_{i}^{-}\ge\mu_{i}^{+}$)
then $F\left(\mu_{i}^{+},\mu_{i}^{-}\right)$ (resp. $H\left(\mu_{i}^{+},\mu_{i}^{-}\right)$)
is equal to $G\left(\mu_{i}^{+},\mu_{i}^{-}\right)$ only if $\mu_{i}^{+}=\mu_{i}^{-}$
or $\mu_{i}^{-}=0$. We will later show (in Claims \ref{claim:2-cases-constant-p}-\ref{claim:2-cases-small-p}) that if $F\left(\mu_{i}^{+},\mu_{i}^{-}\right)$ (resp. $H\left(\mu_{i}^{+},\mu_{i}^{-}\right)$) is approximately
equal to $G\left(\mu_{i}^{+},\mu_{i}^{-}\right)$, then either $\min\left\{ \mu_{i}^{-},\mu_{i}^{+}\right\} $
is small or else $I_{i}[f]$ is small.
\end{itemize}
The following lemma will be used to relate $\mu_i^{+}$ and $\mu_i^-$ to $F(\mu_{i}^{+},\mu_{i}^{-})-G(\mu_{i}^{+},\mu_{i}^{-})$ (or to
$H(\mu_{i}^{+},\mu_{i}^{-})-G(\mu_{i}^{+},\mu_{i}^{-})$), in a more convenient
way.
\begin{lem}
\label{lemma:analysis-basic-functions} If $0 < p < 1$ and $x\ge y\ge0$, then
\[
F\left(x,y\right)-G\left(x,y\right) \geq p\left(x-y\right)\log_{p}\left(\frac{px}{px+\left(1-p\right)y}\right).
\]
If $0 < p\le \tfrac{1}{2}$ and $y\ge x\ge0$, then
\[
H\left(x,y\right)-G\left(x,y\right) \geq \left(1-p\right)\left(y-x\right)\log_{p}\left(\frac{\left(1-p\right)y}{px+\left(1-p\right)y}\right).
\]
If $0 < p\le e^{-2}$ and $y\ge x \ge0$, then
\[
H\left(x,y\right)-G\left(x,y\right) \geq \tfrac{1}{2}p \left(y-x\right).
\]
\end{lem}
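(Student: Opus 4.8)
The plan is to prove the three inequalities of Lemma~\ref{lemma:analysis-basic-functions} by the same calculus technique already used in the proof of Lemma~\ref{Lem:ind-step}: fix the value of $y$ (resp.\ $x$) and differentiate in the other variable, using the fact that the three functions $F,G,H$ all agree on the diagonal $x=y$. Concretely, for the first inequality I would fix $y\ge 0$ and set $\phi(x):=F(x,y)-G(x,y)-p(x-y)\log_p\!\bigl(px/(px+(1-p)y)\bigr)$ for $x\ge y$; we have $\phi(y)=0$, so it suffices to show $\phi'(x)\ge 0$ for $x\ge y$. Using the partial derivatives computed in~\eqref{eq:partial-devs}, $\tfrac{\partial F}{\partial x}-\tfrac{\partial G}{\partial x}=p\log_p(px)-p\log_p(px+(1-p)y)=p\log_p\!\bigl(px/(px+(1-p)y)\bigr)$, and one differentiates the correction term; after simplification the claim reduces to an elementary one-variable inequality of the shape $\log t\le t-1$ (or equivalently concavity of $u\mapsto u\log_p u$), which holds for all $0<p<1$.

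For the second inequality the argument is symmetric: fix $x\ge 0$ and set $\psi(y):=H(x,y)-G(x,y)-(1-p)(y-x)\log_p\!\bigl((1-p)y/(px+(1-p)y)\bigr)$ for $y\ge x$, noting $\psi(x)=0$. From~\eqref{eq:partial-devs}, $\tfrac{\partial H}{\partial y}-\tfrac{\partial G}{\partial y}=(1-p)\log_p((1-p)y)-(1-p)\log_p(1-p)+p+\tfrac{1-p}{\ln p}-\bigl[(1-p)\log_p(px+(1-p)y)+\tfrac{1-p}{\ln p}\bigr]$, and Claim~\ref{claim on alpha(p)} ($K(p)\ge 0$ for $p\le 1/2$) handles the extra term $p-(1-p)\log_p(1-p)$ exactly as in the proof of Lemma~\ref{Lem:ind-step}. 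After differentiating the correction term, what remains is again an inequality of the form $\log t\le t-1$, now needing $p\le 1/2$ only through the use of Claim~\ref{claim on alpha(p)}. So $\psi'(y)\ge 0$ for $y\ge x$, hence $\psi(y)\ge 0$.

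The third inequality is a cruder lower bound on $H-G$ valid in the more restrictive range $p\le e^{-2}$, and I would deduce it from the second. By the second part, $H(x,y)-G(x,y)\ge (1-p)(y-x)\log_p\!\bigl((1-p)y/(px+(1-p)y)\bigr)$, so it suffices to show $(1-p)\log_p\!\bigl((1-p)y/(px+(1-p)y)\bigr)\ge \tfrac12 p$ whenever $y\ge x\ge 0$ and $p\le e^{-2}$. The left-hand side is smallest when the argument of $\log_p$ is largest, i.e.\ when $x=y$, giving $(1-p)\log_p(1-p)=(1-p)\ln(1-p)/\ln p$; since $\ln p\le -2$ and $(1-p)\ln(1-p)\ge -p$ for $p\in(0,1)$ (itself from $\ln(1-p)\ge -p/(1-p)$, a standard estimate), we get $(1-p)\log_p(1-p)\ge \tfrac{-p}{\ln p}\cdot\tfrac{1}{1}\ge \tfrac{p}{2}$ once $\ln(1/p)\ge 2$. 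One must double-check the direction of monotonicity in $x$ (the argument $(1-p)y/(px+(1-p)y)$ is decreasing in $x$, and $\log_p$ is decreasing on $(0,1)$, so $\log_p$ of it is increasing in $x$, hence the minimum over $x\in[0,y]$ is indeed at $x=0$, not $x=y$ — I would redo this bookkeeping carefully), but either way the worst case is an explicit one-variable expression in $p$ that is bounded below by $p/2$ for $p\le e^{-2}$.

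The main obstacle, such as it is, is purely computational: correctly differentiating the correction terms $p(x-y)\log_p(px/(px+(1-p)y))$ and $(1-p)(y-x)\log_p((1-p)y/(px+(1-p)y))$ and verifying that each residual inequality collapses to $\ln t\le t-1$ (or a shifted version thereof); there is no conceptual difficulty, but sign-tracking through the $\log_p = \ln(\cdot)/\ln p$ conversions (where $\ln p<0$ flips inequalities) is error-prone, and the endpoint degeneracies $y=0$ and $x=y$ should be noted since the bounds are attained there. I would also state explicitly at the outset that it suffices to treat $x,y>0$ and extend by continuity, using the convention $0\log_p 0 = 0$.
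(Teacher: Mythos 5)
Your treatment of the first two inequalities is correct and is essentially the paper's own argument in slightly different packaging: the paper bounds $\frac{\partial}{\partial t}\left(F(t,y)-G(t,y)\right)$ (resp.\ $\frac{\partial}{\partial t}\left(H(x,t)-G(x,t)\right)$) from below by its value at the far endpoint and integrates, whereas you subtract the linear correction term first and show the resulting one-variable function is nondecreasing; both reduce to the same facts (the argument of $\log_p$ is monotone in the relevant variable, and $K(p)\ge 0$ for $p\le \tfrac12$), and your computations do go through.

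The third inequality is where there is a genuine gap: it cannot be deduced from the second inequality in the way you propose. As you note in your own parenthetical, the coefficient $(1-p)\log_p\bigl((1-p)y/(px+(1-p)y)\bigr)$ attains its minimum over $x\in[0,y]$ at $x=0$, where the argument of $\log_p$ equals $1$ and the coefficient is therefore exactly $0$ --- not ``an explicit expression bounded below by $p/2$''. At $x=0$ the second inequality reads $H(0,y)-G(0,y)\ge 0$, while the third reads $H(0,y)-G(0,y)\ge \tfrac12 py$; a direct computation gives $H(0,y)-G(0,y)=yK(p)$, so the third inequality at $x=0$ is precisely the (true but not free) statement $K(p)\ge p/2$. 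The information you need lives in the additive term $K(p)$ in $\frac{\partial}{\partial t}\left(H(x,t)-G(x,t)\right)=(1-p)\log_p\bigl((1-p)t/(px+(1-p)t)\bigr)+K(p)$, and passing to the second inequality discards exactly this term. The fix (and the paper's route) is to bound that derivative below by $K(p)$ directly, using that the $\log_p$ term is nonnegative, and then check $K(p)\ge p/2$ for $p\le e^{-2}$. Your verification of this last numerical fact is also reversed: from $(1-p)\ln(1-p)\ge -p$ and $\ln p<0$ one gets $(1-p)\log_p(1-p)\le -p/\ln p=p/\ln(1/p)\le p/2$ (division by $\ln p$ flips the inequality), and it is this \emph{upper} bound on $(1-p)\log_p(1-p)$ that yields $K(p)=p-(1-p)\log_p(1-p)\ge p/2$; your chain ``$(1-p)\log_p(1-p)\ge -p/\ln p\ge p/2$'' has both inequalities pointing the wrong way.
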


\begin{proof}
We show that
\begin{align}
\left.\frac{\partial}{\partial u}\left(F\left(u,y\right)-G\left(u,y\right)\right)\right|_{u=t} & \ge p\log_{p}\left(\frac{px}{px+\left(1-p\right)y}\right)\ \forall y \leq t \leq x, \ 0 < p < 1, \label{eq: calc(1)}\\
\left. \frac{\partial}{\partial u}\left(H\left(x,u\right)-G\left(x,u\right)\right)\right|_{u=t} & \ge\left(1-p\right)\log_{p}\left(\frac{\left(1-p\right)y}{px+\left(1-p\right)y}\right) \ \forall x \leq t \leq y,\ 0 < p \leq 1/2, \label{eq:calc(2)}\\
\left.\frac{\partial}{\partial u}\left(H\left(x,u\right)-G\left(x,u\right)\right)\right|_{u=t} & \ge \tfrac{1}{2}p, \ \forall x \leq t \leq y,\ 0 < p \leq e^{-2}.\label{eq:calc(3)}
\end{align}
These inequalities will complete the proof of the lemma,
by the Fundamental Theorem of Calculus.

Using (\ref{eq:partial-devs}), we have
\begin{align*}
\left. \frac{\partial }{\partial u}\left(F\left(u,y\right)-G\left(u,y\right)\right)\right|_{u=t} & =p\log_{p}\left(pt\right)+\frac{p}{\ln\left(p\right)}-p\log_{p}\left(\left(1-p\right)y+pt\right)-\frac{p}{\ln\left(p\right)}\\
 & =p\log_{p}\left(\frac{pt}{\left(1-p\right)y+pt}\right)\ge p\log_{p}\left(\frac{px}{(1-p)y+px}\right),
\end{align*}
 proving (\ref{eq: calc(1)}). Similarly, if $p \leq 1/2$ and $x \leq t \leq y$, then
\begin{align*}
\left.\frac{\partial }{\partial u}\left(H\left(x,u\right)-G\left(x,u\right)\right)\right|_{u=t}  & = \left(1-p\right)\log_{p}\left(\frac{\left(1-p\right)t}{px+\left(1-p\right)t}\right)+K\left(p\right)\\
& \ge\left(1-p\right)\log_{p}\left(\frac{\left(1-p\right)y}{px+\left(1-p\right)y}\right)+K\left(p\right)\\
 & \ge\left(1-p\right)\log_{p}\left(\frac{\left(1-p\right)y}{px+\left(1-p\right)y}\right),
\end{align*}
proving (\ref{eq:calc(2)}). It is easy to check that for all $p\le e^{-2}$, we have $K\left(p\right)\ge\frac{p}{2}$. Hence, if $x \leq t \leq y$ and $0 < p \leq e^{-2}$, then
\[
\left. \frac{\partial }{\partial u}\left(H\left(x,u\right)-G\left(x,u\right)\right)\right|_{u=t} \ge K\left(p\right)\ge\tfrac{p}{2},
\]
proving (\ref{eq:calc(3)}).
\end{proof}

\subsection{Either $I_{i}[f]$ is small, or $\min\left\{ \mu_{i}^{-},\mu_{i}^{+}\right\} $
is small}

We now show that the influences of $f$ are similar to the influences of a subcube. Note that if $f=1_{S}$ for a subcube $S = \{x \in \{0,1\}^n:\ x_i = a_i\ \forall i \in T\}$, where $T \subset [n]$ and $a_i \in \{0,1\}$ for all $i \in T$, then $\min\left\{ \mu_{i}^{-},\mu_{i}^{+}\right\} =0$
for each $i\in T$, and $I_{i}[f]=0$ for each $i\notin T$. We prove that an approximate version of this statement holds, under our hypotheses.

We start with the simplest case, which is $\zeta<p\le\frac{1}{2}$ for some $\zeta >0$.
\begin{claim}
\label{claim:2-cases-constant-p} Let $\zeta >0$. There exists $C_{2}=C_{2}(\zeta)>0$ such that if $\zeta \leq p \leq 1/2$, then for each $i\in[n]$, one of the following holds.
\begin{description}
\item [{Case (1)}] \label{case:small-influence-1-1} We have $I_{i}[f]\leq C_{2}\epsilon_{i}'$,
and $\min\left\{ \mu_{i}^{-},\mu_{i}^{+}\right\} \geq\left(1-C_{2}\epsilon\right)\mu$.
\item [{Case (2)}] \label{case:large-influence-1-1} We have $\min\left\{ \mu_{i}^{-},\mu_{i}^{+}\right\} \le C_{2}\epsilon_{i}'$,
and $I_{i}[f] \geq\left(1-C_{2}\epsilon\right)\mu$.
\end{description}
\end{claim}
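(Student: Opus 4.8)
The plan is to exploit equations (\ref{eq:rearranged-1}) and (\ref{eq:rearrenged-2}), which express $\epsilon_i'$ as the sum of a nonnegative ``gap'' term $\bigl(F(\mu_i^+,\mu_i^-)-G(\mu_i^+,\mu_i^-)$ or $H(\mu_i^+,\mu_i^-)-G(\mu_i^+,\mu_i^-)\bigr)$ and a nonnegative ``excess influence'' term $p\bigl(I_i[f]-|\mu_i^+-\mu_i^-|\bigr)$. Both summands are therefore individually at most $\epsilon_i'$. Fix $i$ and assume first $\mu_i^-\le\mu_i^+$ (the other case is symmetric, using the second estimate of Lemma \ref{lemma:analysis-basic-functions} in place of the first, and here is where $p\le 1/2$ enters). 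By Lemma \ref{lemma:analysis-basic-functions}, the gap term is bounded below by $p(\mu_i^+-\mu_i^-)\log_p\!\bigl(\tfrac{p\mu_i^+}{p\mu_i^++(1-p)\mu_i^-}\bigr)$, a quantity that is nonnegative, vanishes exactly when $\mu_i^+=\mu_i^-$ or $\mu_i^-=0$, and — crucially, when $p$ is bounded below by $\zeta$ — is bounded below by a $\zeta$-dependent constant times $(\mu_i^+-\mu_i^-)\cdot g(\mu_i^-/\mu_i^+)$ for an explicit function $g$ with $g(t)>0$ for $t\in(0,1]$ and $g(0)=0$. Intuitively: either $\mu_i^-$ is small (comparable to $\epsilon_i'$), or $\mu_i^+-\mu_i^-$ is small (comparable to $\epsilon_i'$), since their product, suitably weighted, is at most $\epsilon_i'/(\text{const})$.

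The two alternatives are then matched to the two cases of the claim as follows. If $\mu_i^+-\mu_i^-\le C_2\epsilon_i'$ (for a suitable $C_2=C_2(\zeta)$), then since $p\mu_i^++(1-p)\mu_i^-=\mu$ by (\ref{eq:basic1}), both $\mu_i^-$ and $\mu_i^+$ lie within $O(\epsilon_i')$ of $\mu$; converting $\epsilon_i'\le \mu\epsilon$ (immediate from the definition $\epsilon_i'=\mu\epsilon-p\mu_i^+\epsilon_i^+-(1-p)\mu_i^-\epsilon_i^-$ together with $\epsilon_i^\pm\ge0$) gives $\min\{\mu_i^-,\mu_i^+\}\ge(1-C_2\epsilon)\mu$, and then the excess-influence bound $I_i[f]\le (\mu_i^+-\mu_i^-)+\epsilon_i'/p\le C_2'\epsilon_i'$ puts us in Case (1). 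If instead $\mu_i^-\le C_2\epsilon_i'$, then $\mu_i^+=(\mu-(1-p)\mu_i^-)/p$ forces $\mu_i^+\ge(1-C_2\epsilon)\mu/p\ge(1-C_2\epsilon)\mu$, so $I_i[f]\ge \mu_i^+-\mu_i^-\ge(1-C_2'\epsilon)\mu$ and $\min\{\mu_i^-,\mu_i^+\}=\mu_i^-\le C_2\epsilon_i'$, which is Case (2). One must check the two cases are genuinely exhaustive — that is, that there is a single threshold constant $C_2(\zeta)$ for which ``$\mu_i^+-\mu_i^->C_2\epsilon_i'$ and $\mu_i^->C_2\epsilon_i'$'' is impossible; this is exactly the quantitative content of the lower bound on the gap term, since that product being large would force $\epsilon_i'$ large, contradicting $\epsilon_i'\le\mu\epsilon\le\mu$ combined with the gap lower bound (after absorbing $\mu$-factors, which are harmless since everything scales linearly in $\mu$).

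The main obstacle is the analytic estimate in the previous paragraph: showing that on the region $\zeta\le p\le\tfrac12$, $0\le y\le x\le\mu$, one has
\[
p(x-y)\log_p\!\Bigl(\tfrac{px}{px+(1-p)y}\Bigr)\ \ge\ c(\zeta)\,\frac{(x-y)\,y}{\mu},
\]
or some equivalent two-sided comparison letting one extract ``$y$ small or $x-y$ small''. This requires understanding the behaviour of $\log_p\bigl(\tfrac{px}{px+(1-p)y}\bigr)$ both when $y/x\to0$ (where it behaves like $\tfrac{(1-p)y}{px\ln(1/p)}$, giving the product the right order) and when $y/x\to1$ (where the logarithm is bounded away from $0$ by a $\zeta$-dependent constant), and checking uniformity in $p\in[\zeta,1/2]$ by a compactness argument. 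The normalisation by $\mu$ is the mildly fiddly point: all of $\mu_i^-,\mu_i^+,\epsilon_i'$ scale with $\mu$, so one should divide through by $\mu$ at the outset and work with $\hat\mu_i^\pm:=\mu_i^\pm/\mu\in[0,1/p]$, reducing to a statement about a fixed compact semialgebraic region on which the relevant functions are continuous and their zero sets are explicitly identified by the equality analysis in Lemma \ref{Lem:ind-step}.
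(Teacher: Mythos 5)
Your overall route is the same as the paper's: decompose $\epsilon_i'$ via (\ref{eq:rearranged-1})--(\ref{eq:rearrenged-2}) into the nonnegative gap term and the nonnegative excess-influence term, bound the gap from below by Lemma \ref{lemma:analysis-basic-functions}, and match the alternatives ``$\mu_i^+-\mu_i^-=O(\epsilon_i')$'' and ``$\min\{\mu_i^-,\mu_i^+\}=O(\epsilon_i')$'' to Cases (1) and (2) exactly as you do. Your proposed product inequality is also correct, and needs no compactness argument: $\ln(1+t)\ge t/(1+t)$ gives $\log_p\bigl(\tfrac{px}{px+(1-p)y}\bigr)\ge\tfrac{(1-p)y}{\mu\ln(1/p)}$, whence the gap is at least $\tfrac{p(1-p)}{\ln(1/p)}\cdot\tfrac{(x-y)y}{\mu}\ge c(\zeta)\,\tfrac{(x-y)y}{\mu}$.

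The genuine gap is your justification of exhaustiveness. You argue that if both $\mu_i^+-\mu_i^->C_2\epsilon_i'$ and $\mu_i^->C_2\epsilon_i'$, then the product is large enough to contradict the gap bound combined with $\epsilon_i'\le\mu\epsilon$. It is not: that hypothesis only gives $(\mu_i^+-\mu_i^-)\mu_i^-/\mu>C_2^2(\epsilon_i')^2/\mu$, and comparing with the upper bound $\epsilon_i'/c(\zeta)$ merely yields $\epsilon_i'<\mu/(c(\zeta)C_2^2)$, which is perfectly consistent with $\epsilon_i'\le\mu\epsilon$. Concretely, $\mu_i^-=\mu_i^+-\mu_i^-=\sqrt{\mu\epsilon_i'}$ satisfies the product bound while both factors vastly exceed $C_2\epsilon_i'$ when $\epsilon_i'\ll\mu$. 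What rescues the dichotomy is the constraint $p\mu_i^++(1-p)\mu_i^-=\mu$, which the paper exploits by splitting on whether $\min\{\mu_i^-,\mu_i^+\}\ge\mu/2$: if $\mu_i^-\ge\mu/2$, the logarithmic factor is $\Omega_{\zeta}(1)$ and the gap bound directly forces $\mu_i^+-\mu_i^-=O_{\zeta}(\epsilon_i')$; if $\mu_i^-<\mu/2$, then necessarily $\mu_i^+-\mu_i^-=\Omega(\mu)$, and your product bound forces $\mu_i^-=O_{\zeta}(\epsilon_i')$. Once this case split is inserted (it also shows the problematic configuration above violates $p\mu_i^++(1-p)\mu_i^-=\mu$), the remainder of your argument --- including the derivation of the influence bounds in each case --- goes through as written.
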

We remark that in Claim \ref{claim:2-cases-constant-p}, it is necessary that $C_2$ depend on $\zeta$; this is evidenced e.g.\ by the function $f=1_{B}$ in Section \ref{sec:examples}, with $t=1$, $s=3$ and $i=2$.
\begin{proof}[Proof of Claim \ref{claim:2-cases-constant-p}.]
 By Lemma \ref{lemma:analysis-basic-functions} and (\ref{eq:rearranged-1}), if $\mu_i^-\leq \mu_i^+$ then
\begin{align}
p\left(\mu_{i}^{+}-\mu_{i}^{-}\right)\log_{p}\left(\frac{p\mu_{i}^{+}}{\mu}\right) & \le F\left(\mu_{i}^{+},\mu_{i}^{-}\right)-G\left(\mu_{i}^{+},\mu_{i}^{-}\right) \nonumber\\
& \leq\epsilon_{i}'-pI_{i}[f]-p\mu_{i}^{-}+p\mu_{i}^{+}.\label{eq:constant 2-cases(1)}
\end{align}
By Lemma \ref{lemma:analysis-basic-functions} and (\ref{eq:rearranged-2}), if $\mu_i^-\geq \mu_i^+$ then
\begin{align}
\left(1-p\right)\left(\mu_{i}^{-}-\mu_{i}^{+}\right)\log_{p}\left(\frac{\left(1-p\right)\mu_{i}^{-}}{\mu}\right) &\le H\left(\mu_{i}^{+},\mu_{i}^{-}\right)-G\left(\mu_{i}^{+},\mu_{i}^{-}\right) \nonumber\\
& \leq\epsilon_{i}'-pI_{i}[f]-p\mu_{i}^{+}+p\mu_{i}^{-}.\label{eq:constant 2-cases(2)}
\end{align}

Since the right-hand sides of (\ref{eq:constant 2-cases(1)}) and (\ref{eq:constant 2-cases(2)}) are non-negative, we have
\begin{equation}
I_{i}[f] -\left|\mu_{i}^{+}-\mu_{i}^{-}\right|\le\tfrac{1}{p} \epsilon_{i}'\leq \tfrac{1}{\zeta} \epsilon_{i}'.\label{eq:Inf_i is like monotone}
\end{equation}
We now split into two cases.

\begin{caseenv}
\item[Case (a):] $\min\left\{ \mu_{i}^{-},\mu_{i}^{+}\right\} \ge\frac{\mu}{2}$.
\end{caseenv}

In this case, we have
$$p\log_{p}\left(\frac{p\mu_{i}^{+}}{\mu}\right) = \Omega_{\zeta}(1),\quad \left(1-p\right)\log_{p}\left(\frac{\left(1-p\right)\mu_{i}^{-}}{\mu}\right) = \Omega_{\zeta}(1),$$
so
\[
\left|\mu_{i}^{+}-\mu_{i}^{-}\right| = O_{\zeta}\left(\epsilon'_{i}\right),
\]
by (\ref{eq:constant 2-cases(1)}) and (\ref{eq:constant 2-cases(2)}).
Equation (\ref{eq:Inf_i is like monotone}) now implies that $I_{i}[f] = O_{\zeta}\left(\epsilon'_{i}\right)$. Therefore, $\min\left\{ \mu_{i}^{-},\mu_{i}^{+}\right\} \ge\mu-I_{i}[f] = \mu-O_{\zeta}(\epsilon_{i}^{'}) = \mu - O_{\zeta}(\epsilon \mu) = \mu(1-O_{\zeta}(\epsilon))$. (Note that, by the definition of $\epsilon_i'$ in (\ref{eq:rearranged-1}) and(\ref{eq:rearranged-2}), we always have $\epsilon_i' \leq \epsilon \mu$.) Hence, Case (1) of the claim occurs.

\begin{caseenv}
\item[Case (b):] $\min\left\{ \mu_{i}^{-},\mu_{i}^{+}\right\} \le\frac{\mu}{2}$.
\end{caseenv}

Firstly, suppose in addition that $\mu_i^- \leq \mu_i^+$, so that $\mu_i^- \leq \mu/2$. Then $p(\mu_{i}^{+}-\mu_{i}^{-}) \geq p(\mu-\mu_i^-) \geq p\mu/2 \geq \zeta \mu/2 = \Omega_{\zeta}\left(\mu\right)$, so (\ref{eq:constant 2-cases(1)}) implies that
$$\log_{p}\left(\frac{p\mu_{i}^{+}}{\mu}\right) = O_{\zeta}( \epsilon_{i}^{'} / \mu).$$

Hence, $\ln\left(\frac{\mu}{p\mu_{i}^{+}}\right) = O_{\zeta}(\epsilon_{i}'/\mu)$, and therefore
\[
1+\frac{\left(1-p\right)\mu_{i}^{-}}{p\mu_{i}^{+}}=\frac{\mu}{p\mu_{i}^{+}} = \exp\left(O_{\zeta}(\epsilon_{i}^{'}/\mu)\right)=1+O_{\zeta}(\epsilon_{i}^{'}/\mu).
\]
 Therefore, $\mu_{i}^{-} = O_{\zeta}(\epsilon_{i}^{'})\frac{p \mu_{i}^{+}}{(1-p)\mu} = O_{\zeta}(\epsilon_{i}^{'})$.
We now have $I_{i}[f]\ge\mu-\mu_{i}^{-} = \mu - O_{\zeta}(\epsilon_i') = \mu - O_{\zeta}(\epsilon \mu) = (1-O_{\zeta}(\epsilon))\mu$. Hence, Case (2) of the claim occurs.

\medskip

Secondly, suppose in addition that $\mu_i^+ \leq \mu_i^-$, so that $\mu_i^+ \leq \mu/2$. Then we have $(1-p)(\mu_{i}^{-}-\mu_{i}^{+}) = \Omega\left(\mu\right)$, so (\ref{eq:constant 2-cases(2)}) implies that
$$\log_{p}\left(\frac{(1-p)\mu_{i}^{-}}{\mu}\right) = O( \epsilon_{i}^{'} / \mu).$$
Hence, $\ln\left(\frac{\mu}{(1-p)\mu_{i}^{-}}\right) = O_{\zeta}(\epsilon_{i}'/\mu)$, and therefore
\[
1+\frac{p\mu_{i}^{+}}{(1-p)\mu_{i}^{-}}=\frac{\mu}{(1-p)\mu_{i}^{-}} = \exp\left(O_{\zeta}(\epsilon_{i}^{'}/\mu)\right)=1+O_{\zeta}(\epsilon_{i}^{'}/\mu).
\]
 Therefore, $\mu_{i}^{+} = O_{\zeta}(\epsilon_{i}^{'})\frac{(1-p) \mu_{i}^{-}}{p \mu} = O_{\zeta}(\epsilon_{i}^{'})$. It follows that $I_{i}[f]\geq (1-O_{\zeta}(\epsilon))\mu$, so again, Case (2) of the claim must occur.

\end{proof}
We now prove a version of Claim \ref{claim:2-cases-constant-p} for monotone increasing $f$ and for all $p$ bounded away from 1. The idea of the proof
is the same, but the details are slightly messier, mainly because $p$ is no longer bounded away from $0$.
\begin{claim}
\label{claim:mon-2-cases} For any $\eta >0$, there exists $C_2 = C_2(\eta)>0$ such that the following holds. Suppose that $f$ is monotone increasing and that $0 < p \leq 1-\eta$. Let $i\in[n]$. Then one of the following
must occur.
\begin{description}
\item [{Case} (1)] We have $pI_{i}[f] \leq C_{2}\epsilon_{i}'\ln(1/p)$,
and $\mu_{i}^{-}\geq\left(1-C_{2}\epsilon\ln(1/p)\right)\mu$.
\item [{Case} (2)] We have $\mu_{i}^{-}\le C_{2}\epsilon_{i}'\ln(1/p)$,
and $pI_{i}[f] \geq\left(1-C_{2}\epsilon\ln(1/p)\right)\mu$.
\end{description}
\end{claim}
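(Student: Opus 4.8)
The plan is to mimic the proof of Claim~\ref{claim:2-cases-constant-p}, but using monotonicity to reduce to the single case $\mu_i^- \le \mu_i^+$ (which holds for every $i$ since $f$ is monotone increasing), and replacing the crude bounds that were available when $p$ was bounded away from $0$ by the sharper estimates in Lemma~\ref{lemma:analysis-basic-functions}. First I would record that, by monotonicity, $I_i[f] = \mu_i^+ - \mu_i^-$ exactly, so the term $p(I_i[f]-(\mu_i^+-\mu_i^-))$ in~(\ref{eq:rearranged-1}) vanishes, and (\ref{eq:rearranged-1}) together with the first inequality of Lemma~\ref{lemma:analysis-basic-functions} gives
\[
p(\mu_i^+-\mu_i^-)\log_p\!\left(\frac{p\mu_i^+}{\mu}\right) \le F(\mu_i^+,\mu_i^-)-G(\mu_i^+,\mu_i^-) \le \epsilon_i'.
\]
Note $\log_p(p\mu_i^+/\mu) = \ln(\mu/(p\mu_i^+))/\ln(1/p) \ge 0$ since $p\mu_i^+ \le p\mu_i^+ + (1-p)\mu_i^- = \mu$. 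This is the key one-line inequality that drives everything.

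Next I would split on whether $\mu_i^- \ge \mu/2$ or $\mu_i^- < \mu/2$, exactly as in Cases (a) and (b) of Claim~\ref{claim:2-cases-constant-p}. In the first case, $\mu_i^+ \ge \mu_i^- \ge \mu/2$ forces $p\mu_i^+/\mu \ge p/2$, hence $\ln(\mu/(p\mu_i^+)) \le \ln(2/p) = O(\ln(1/p))$; combined with the displayed inequality this yields $p(\mu_i^+-\mu_i^-) \cdot \Omega(1/\ln(1/p)) \cdot \ln(\text{something}\ge 1)$... more precisely I would instead argue: from the display, $p(\mu_i^+-\mu_i^-)\ln(\mu/(p\mu_i^+)) \le \epsilon_i'\ln(1/p)$, and since $\mu/(p\mu_i^+) = 1 + (1-p)\mu_i^-/(p\mu_i^+) \ge 1 + \eta\mu_i^-/\mu_i^+ \ge 1 + \eta$ (using $\mu_i^- \ge \mu_i^+ \cdot$ nothing --- rather $\mu_i^- \le \mu_i^+$, so this needs care); the cleaner route in Case (a) is that $\mu_i^+, \mu_i^- \in [\mu/2, \mu]$ forces $\mu/(p\mu_i^+) \ge \mu/(p\mu) = 1/p$, so $\ln(\mu/(p\mu_i^+)) \ge \ln(1/p)$, whence $p(\mu_i^+-\mu_i^-) \le \epsilon_i'$, i.e.\ $I_i[f] = \mu_i^+-\mu_i^- \le \epsilon_i'/p$, giving $pI_i[f]\le \epsilon_i' \le C_2\epsilon_i'\ln(1/p)$ and $\mu_i^- = \mu - p I_i[f] \ge \mu - \epsilon_i' = \mu(1 - O(\epsilon))$, i.e.\ Case~(1). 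In the second case, $\mu_i^- < \mu/2$ gives $p\mu_i^+ = \mu - (1-p)\mu_i^- > \mu/2$, hence $\mu_i^+ - \mu_i^- \ge \mu_i^+ - \mu/2 > \mu_i^+ - p\mu_i^+ = (1-p)\mu_i^+ \ge \eta\mu_i^+ \ge \eta\mu/2$ --- wait, more simply $\mu_i^+-\mu_i^- = I_i[f] \ge \mu - 2\mu_i^- \ge$ nonnegative; the point is $p(\mu_i^+-\mu_i^-) = \Omega(\mu)$ is not immediate, so I would instead use $p\mu_i^+ > \mu/2$ to get $\mu_i^+ - \mu_i^- > \mu_i^+/2 \ge \mu/4$ is also not instant. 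I will use: $\mu_i^+ - \mu_i^- = (\mu - (1-p)\mu_i^-)/p - \mu_i^- = (\mu - \mu_i^-)/p \ge \mu/(2p)$, so $p(\mu_i^+-\mu_i^-) \ge \mu/2$. Then the display gives $\ln(\mu/(p\mu_i^+)) \le 2\epsilon_i'\ln(1/p)/\mu$, so $\mu/(p\mu_i^+) \le \exp(2\epsilon_i'\ln(1/p)/\mu) = 1 + O(\epsilon_i'\ln(1/p)/\mu)$ (valid since $\epsilon\ln(1/p) = \epsilon'$ is small), hence $(1-p)\mu_i^-/(p\mu_i^+) = O(\epsilon_i'\ln(1/p)/\mu)$, and using $\mu_i^+ \le \mu/p$ and $1-p \ge \eta$ we get $\mu_i^- = O(\epsilon_i'\ln(1/p))$. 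Then $pI_i[f] = p(\mu_i^+-\mu_i^-) = \mu - \mu_i^- \ge \mu(1 - O(\epsilon\ln(1/p)))$, which is Case~(2).

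The main obstacle is bookkeeping the $\ln(1/p)$ factors correctly: because $p$ is no longer bounded away from $0$, every occurrence of $\log_p$ must be converted to $\ln/\ln(1/p)$ and the resulting $\ln(1/p)$ factors tracked so that they land exactly where the statement demands (on $\epsilon_i'$ and on $\epsilon$, but not on $\mu$), and one must verify that $\epsilon' = \epsilon\ln(1/p)$ being small is enough to linearize the exponential. The monotonicity hypothesis is what makes this tractable --- it removes the $\mu_i^- \ge \mu_i^+$ branch entirely (so only the $F$--$G$ estimate is needed, never the $H$--$G$ one, and in particular the constraint $p \le 1/2$ never enters) and it makes $I_i[f] = \mu_i^+ - \mu_i^-$ exact, eliminating the error term $p(I_i[f]-(\mu_i^+-\mu_i^-))$. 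I would also double-check the degenerate cases $\mu = 0$ or $\mu_i^+ = 0$ using the convention $0\log_p 0 = 0$, and absorb all implied constants (which depend only on $\eta$) into a single $C_2 = C_2(\eta)$ at the end.
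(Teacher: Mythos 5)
Your overall strategy is the same as the paper's: monotonicity kills the $H$--$G$ branch and makes $I_i[f]=\mu_i^+-\mu_i^-$ exact, so everything reduces to the single inequality $p(\mu_i^+-\mu_i^-)\log_p(p\mu_i^+/\mu)\le\epsilon_i'$ followed by a two-case analysis according to whether $p\mu_i^+$ is bounded away from $\mu$. Your Case (b) is correct and matches the paper's (the paper uses $1-e^{-x}\le x$ in place of your linearization of the exponential, which avoids even needing $\epsilon'$ to be small, but both work).

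However, your Case (a) contains a false step. From $\mu_i^-\ge\mu/2$ you cannot conclude $\mu_i^+\le\mu$: since $\mu=p\mu_i^++(1-p)\mu_i^-$, the hypothesis only gives $p\mu_i^+\le\mu-(1-p)\mu/2=\tfrac{1+p}{2}\mu$, i.e.\ $\mu_i^+\le\tfrac{1+p}{2p}\mu$, which for small $p$ is far larger than $\mu$. Consequently the bound $\ln(\mu/(p\mu_i^+))\ge\ln(1/p)$ and the derived inequality $pI_i[f]\le\epsilon_i'$ are both wrong (and the latter would in any case not imply $pI_i[f]\le C_2\epsilon_i'\ln(1/p)$ when $p$ is close to $1$, since then $\ln(1/p)$ is tiny). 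The correct deduction from your case hypothesis is $\mu/(p\mu_i^+)\ge 2/(1+p)\ge 2/(2-\eta)$, so $\log_p(p\mu_i^+/\mu)\ge\ln(2/(2-\eta))/\ln(1/p)$ --- an $\Omega_\eta(1/\ln(1/p))$ lower bound rather than a $\ge 1$ one. Feeding this into the key inequality gives $pI_i[f]\le\epsilon_i'\ln(1/p)/\ln(2/(2-\eta))$, which is exactly the form Case (1) demands, and then $\mu_i^-=\mu-pI_i[f]\ge\mu-C_2\epsilon_i'\ln(1/p)\ge(1-C_2\epsilon\ln(1/p))\mu$ follows as you intended. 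This repaired version is precisely the paper's argument (the paper splits on $p\mu_i^+\lessgtr(1-\tfrac{\eta}{2})\mu$ rather than on $\mu_i^-\gtrless\mu/2$, but the two splits are interchangeable up to constants depending on $\eta$).
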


We remark that in Claim \ref{claim:mon-2-cases}, it is necessary that $C_2$ depend on $\eta$; this is evidenced e.g.\ by the function $f=1_{B}$ in Section \ref{sec:examples}, with $t=1$, $s=3$ and $i=1$.

\begin{proof}
By Lemma \ref{lemma:analysis-basic-functions} and equation (\ref{eq:rearranged-1}), we have
\[
pI_{i}[f] \log_{p}\left(\frac{p\mu_{i}^{+}}{\mu}\right) = p(\mu_i^+-\mu_i^-) \log_{p}\left(\frac{p\mu_{i}^{+}}{\mu}\right)\le F\left(\mu_{i}^{+},\mu_{i}^{-}\right)-G\left(\mu_{i}^{+},\mu_{i}^{-}\right)\leq\epsilon_{i}'.
\]
 We now split into two cases.
\begin{caseenv}
\item[Case (a):] $\mu_{i}^{+}\leq(1-\tfrac{\eta}{2})\frac{\mu}{p}$.
\end{caseenv}

If $\mu_{i}^{+}\leq(1-\tfrac{\eta}{2})\frac{\mu}{p}$, then Case (1)
of Claim \ref{claim:mon-2-cases} must occur, provided we take $C_{2}$
to be sufficiently large. Indeed, we then have
\[
pI_{i}[f] \log_{p}(1-\tfrac{\eta}{2}) = p(\mu_i^+-\mu_i^-) \log_{p}(1-\tfrac{\eta}{2})\leq p(\mu_i^+-\mu_i^-)\log_{p}\left(\frac{p\mu_{i}^{+}}{\mu}\right)\leq\epsilon_{i}',
\]
 which gives $pI_{i}[f] \leq\frac{1}{\ln\left(\frac{2}{2-\eta}\right)}\epsilon_{i}'\ln(1/p)\leq C_{2}\epsilon_{i}'\ln(1/p)$,
provided we choose $C_{2}\geq1/(\ln(2/(2-\eta)))$. This in turn implies that
\[
\mu_{i}^{-}=\mu-p(\mu_{i}^{+}-\mu_{i}^{-})=\mu-pI_{i}[f]\geq\mu-C_{2}\epsilon_{i}'\ln(1/p)\geq\mu-C_{2}\epsilon\mu\ln(1/p),
\]
 so Case (1) occurs, as asserted.
\begin{caseenv}
\item[Case (b):] $\mu_{i}^{+}\ge\left(1-\frac{\eta}{2}\right)\frac{\mu}{p}$.
\end{caseenv}

If $\mu_{i}^{+} \geq (1-\tfrac{\eta}{2})\frac{\mu}{p}$, then Case (2)
of Claim \ref{claim:mon-2-cases} must occur. Indeed, since $\mu_{i}^{-}\leq\mu$,
we have $pI_{i}[f]=p(\mu_{i}^{+}-\mu_{i}^{-})\geq\left(1-\tfrac{\eta}{2}-p\right)\mu\geq\tfrac{1}{2}\eta\mu$.
We now have
\[
\log_{p}\left(\frac{p\mu_{i}^{+}}{\mu}\right)\le\frac{\epsilon_{i}'}{p(\mu_i^+-\mu_i^-)}\leq\frac{2\epsilon_{i}'}{\eta\mu}\leq\frac{2\epsilon_{i}'}{\eta p\mu_{i}^{+}}.
\]
 Hence,
\[
\frac{p\mu_{i}^{+}}{\mu}\geq p^{2\epsilon_{i}'/(\eta p\mu_{i}^{+})}=\exp\left(-\frac{2\epsilon_{i}'\ln(1/p)}{\eta p\mu_{i}^{+}}\right).
\]
 Using the fact that $1-e^{-x}\leq x$ for all $x\geq0$, we have
\begin{align*}
\left(1-p\right)\frac{\mu_{i}^{-}}{\mu}=1-\frac{p\mu_{i}^{+}}{\mu}\leq1-\exp\left(-\frac{2\epsilon_{i}'\ln(1/p)}{\eta p\mu_{i}^{+}}\right)\leq\frac{2\epsilon_{i}'\ln(1/p)}{\eta p\mu_{i}^{+}}.
\end{align*}
 This implies
\begin{align*}
\mu_{i}^{-}\leq\left(\frac{\mu}{\eta p\mu_{i}^{+}}\right) \left(\frac{2}{1-p}\right) \epsilon_{i}'\ln(1/p)\leq\left(\frac{2}{\eta(2-\eta)}\right)\left(\frac{2}{\eta}\right)\epsilon_{i}'\ln(1/p)\leq C_{2}\epsilon_{i}'\ln(1/p),
\end{align*}
 provided we choose $C_{2}\geq\frac{4}{\eta^{2}(2-\eta)}$. We now
have
\[
pI_{i}[f]=p(\mu_{i}^{+}-\mu_{i}^{-})=\mu-\mu_{i}^{-}\geq\mu-C_{2}\epsilon_{i}'\ln(1/p)\geq\mu-C_{2}\epsilon\mu\ln(1/p),
\]
 so Case (2) occurs, as asserted.
\end{proof}

We now prove a version of Claim \ref{claim:2-cases-constant-p} for small $p$ and a general $f$ (i.e., not necessarily monotone increasing). Here, similarly to
in the monotone case, we obtain that either $\mu_{i}^{-}$ is small, or else
$pI_{i}[f]$ is small.

\begin{claim}
\label{claim:2-cases-small-p} There exists an absolute constant $C_2>0$ such that if $0 < p \leq e^{-2}$, then for each $i \in [n]$, one
of the following holds.
\begin{description}
\item [{Case (1)}] We have $pI_{i}[f]\leq C_2 \epsilon_{i}'\ln(1/p)$, and $\mu_{i}^{-}\geq \left(1-C_{2}\epsilon\ln(1/p)\right)\mu$.
\item [{Case (2)}] We have $\mu_{i}^{-}\le C_{2}\epsilon_{i}'\ln(1/p)$,
and $pI_{i}[f]\geq\left(1-C_{2}\epsilon\ln(1/p)\right)\mu$.
\end{description}
\end{claim}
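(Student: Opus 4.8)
The plan is to follow the proof of Claim~\ref{claim:mon-2-cases} almost verbatim, the only genuinely new ingredient being the third estimate of Lemma~\ref{lemma:analysis-basic-functions}, which is available exactly because $p\le e^{-2}$. Throughout I will use the identity $\mu-\mu_{i}^{-}=p(\mu_{i}^{+}-\mu_{i}^{-})$ (immediate from~(\ref{eq:basic1})) and the elementary fact that $I_{i}[f]\ge|\mu_{i}^{+}-\mu_{i}^{-}|$; I also record that $\epsilon_{i}'\le\mu\epsilon$, which follows from $\epsilon_{i}^{\pm}\ge0$. The case split is on the sign of $\mu_{i}^{+}-\mu_{i}^{-}$.

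Suppose first that $\mu_{i}^{-}\ge\mu_{i}^{+}$. Then $\mu$ is a convex combination of $\mu_{i}^{+}\le\mu_{i}^{-}$, so $\mu_{i}^{-}\ge\mu$ and the measure requirement of Case~(1) holds for the cheap reason that $\mu_{i}^{-}\ge\mu\ge(1-C_{2}\epsilon\ln(1/p))\mu$. For the influence bound I would combine the third inequality of Lemma~\ref{lemma:analysis-basic-functions}, $H(\mu_{i}^{+},\mu_{i}^{-})-G(\mu_{i}^{+},\mu_{i}^{-})\ge\tfrac{1}{2}p(\mu_{i}^{-}-\mu_{i}^{+})$, with the rearrangement~(\ref{eq:rearrenged-2}) and the bound $I_{i}[f]\ge\mu_{i}^{-}-\mu_{i}^{+}$; this yields $pI_{i}[f]\le2\epsilon_{i}'$, which is at most $C_{2}\epsilon_{i}'\ln(1/p)$ since $\ln(1/p)\ge2$. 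So Case~(1) occurs.

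Now suppose $\mu_{i}^{-}\le\mu_{i}^{+}$. From the first inequality of Lemma~\ref{lemma:analysis-basic-functions} together with~(\ref{eq:rearranged-1}) and $I_{i}[f]\ge\mu_{i}^{+}-\mu_{i}^{-}$ I obtain both $p(\mu_{i}^{+}-\mu_{i}^{-})\log_{p}(p\mu_{i}^{+}/\mu)\le F(\mu_{i}^{+},\mu_{i}^{-})-G(\mu_{i}^{+},\mu_{i}^{-})\le\epsilon_{i}'$ and $pI_{i}[f]\le p(\mu_{i}^{+}-\mu_{i}^{-})+\epsilon_{i}'$. If $\mu_{i}^{+}\le\mu/(2p)$, then $\log_{p}(p\mu_{i}^{+}/\mu)\ge\log_{p}(1/2)=\ln2/\ln(1/p)$, so $\mu-\mu_{i}^{-}=p(\mu_{i}^{+}-\mu_{i}^{-})\le\epsilon_{i}'\ln(1/p)/\ln2\le\mu\epsilon\ln(1/p)/\ln2$, and with the second displayed bound this gives Case~(1) once $C_{2}$ is large enough. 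If instead $\mu_{i}^{+}\ge\mu/(2p)$, then $p\mu_{i}^{+}\ge\mu/2$, hence $(1-p)\mu_{i}^{-}\le\mu/2$ and therefore $\mu-\mu_{i}^{-}=p(\mu_{i}^{+}-\mu_{i}^{-})\ge\mu(1-\tfrac{1}{2(1-p)})\ge\mu/3$ (using $p\le e^{-2}$); feeding this into the first displayed bound gives $\log_{p}(p\mu_{i}^{+}/\mu)\le3\epsilon_{i}'/\mu$, so $p\mu_{i}^{+}/\mu\ge\exp(-3\epsilon_{i}'\ln(1/p)/\mu)$, and $1-e^{-x}\le x$ then yields $(1-p)\mu_{i}^{-}/\mu\le3\epsilon_{i}'\ln(1/p)/\mu$, i.e.\ $\mu_{i}^{-}\le C_{2}\epsilon_{i}'\ln(1/p)$; finally $pI_{i}[f]\ge p(\mu_{i}^{+}-\mu_{i}^{-})=\mu-\mu_{i}^{-}\ge(1-C_{2}\epsilon\ln(1/p))\mu$, so Case~(2) occurs.

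The constant $C_{2}$ is then the maximum of the finitely many absolute constants produced along the way (taking $C_{2}=\tfrac{3}{1-e^{-2}}$, say, dominates $1$ and $\tfrac{1}{\ln2}+\tfrac12$ as well). I do not expect any real obstacle: the argument is structurally identical to that of Claim~\ref{claim:mon-2-cases}, with the fixed constant $\tfrac12$ in place of $\tfrac{\eta}{2}$, and the hypothesis $p\le e^{-2}$ enters only through the third bound of Lemma~\ref{lemma:analysis-basic-functions} and the trivial remark $\ln(1/p)\ge2$. The one genuinely new point, compared with the monotone case, is the regime $\mu_{i}^{-}\ge\mu_{i}^{+}$: there the measure half of Case~(1) is automatic from $\mu_{i}^{-}\ge\mu$, so only the influence estimate needs the analytic input.
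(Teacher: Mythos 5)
Your argument is correct and follows the paper's proof essentially verbatim: the case $\mu_i^-\ge\mu_i^+$ is handled exactly as in the paper via the third bound of Lemma~\ref{lemma:analysis-basic-functions} (your $pI_i[f]\le 2\epsilon_i'$ is in fact slightly sharper than the paper's $3\epsilon_i'$), and the case $\mu_i^+>\mu_i^-$ is precisely the paper's reduction to the argument of Claim~\ref{claim:mon-2-cases} with $\eta=1-e^{-2}$, which you have simply written out explicitly with the threshold $\mu/(2p)$ in place of $(1-\tfrac{\eta}{2})\mu/p$.
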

\begin{proof}
By (\ref{eq:Inf_i is like monotone}), we have
\begin{equation}\label{eq:inf-ineq} pI_{i}[f]-p\left|\mu_{i}^{+}-\mu_{i}^{-}\right|\le\epsilon_{i}'.\end{equation}
Firstly, suppose that $\mu_i^- \geq \mu_i^+$; then $\mu_i^- \geq \mu$, so clearly we have $\mu_i^- \geq (1-C_2\epsilon \ln(1/p))\mu$ for any $C_2>0$. Moreover, by Lemma \ref{lemma:analysis-basic-functions} and (\ref{eq:rearranged-2}), we have
\begin{equation} \label{eq:diff-ineq} \left(\mu_{i}^{-}-\mu_{i}^{+}\right)\tfrac{p}{2}\le H\left(\mu_{i}^{+},\mu_{i}^{-}\right)-G\left(\mu_{i}^{+},\mu_{i}^{-}\right)\le\epsilon_{i}^{'}\end{equation}
Combining (\ref{eq:inf-ineq}) and (\ref{eq:diff-ineq}) yields $p I_i[f] \leq 3\epsilon_i'$, so Case (1) holds.

Secondly, suppose that $\mu_i^+ > \mu_i^-$. By Lemma \ref{lemma:analysis-basic-functions} and equation (\ref{eq:rearranged-1}), we have
\[
p(\mu_i^+-\mu_i^-) \log_{p}\left(\frac{p\mu_{i}^{+}}{\mu}\right)\le F\left(\mu_{i}^{+},\mu_{i}^{-}\right)-G\left(\mu_{i}^{+},\mu_{i}^{-}\right)\leq\epsilon_{i}'.
\]
Similarly to in the proof of Claim \ref{claim:mon-2-cases}, we now split into two cases.
\begin{caseenv}
\item[Case (a):] $\mu_{i}^{+}\leq \frac{\mu}{2p}$.
\end{caseenv}

If $\mu_{i}^{+}\leq\frac{\mu}{2p}$, then Case (1)
of Claim \ref{claim:mon-2-cases} must occur, provided we take $C_{2}$
to be sufficiently large. Indeed, we then have
\[
p(\mu_i^+-\mu_i^-) \log_{p}(1/2)\leq p(\mu_i^+-\mu_i^-)\log_{p}\left(\frac{p\mu_{i}^{+}}{\mu}\right)\leq\epsilon_{i}',
\]
 which, in combination with (\ref{eq:inf-ineq}), gives $pI_{i}[f] \leq\frac{1}{\ln 2}\epsilon_{i}'\ln(1/p) +\epsilon_i'\leq C_{2}\epsilon_{i}'\ln(1/p)$,
provided we choose $C_{2}\geq1/(\ln 2) + 1/2$. This in turn implies that
\[
\mu_{i}^{-}=\mu-p(\mu_{i}^{+}-\mu_{i}^{-})\geq \mu-pI_{i}[f]\geq\mu-C_{2}\epsilon_{i}'\ln(1/p)\geq\mu-C_{2}\epsilon\mu\ln(1/p),
\]
 so Case (1) occurs, as asserted.
\begin{caseenv}
\item[Case (b):] $\mu_{i}^{+}\ge \frac{\mu}{2p}$.
\end{caseenv}

If $\mu_{i}^{+} \geq \frac{\mu}{2p}$, then Case (2)
of Claim \ref{claim:mon-2-cases} must occur. Indeed, since $\mu_{i}^{-}\leq\mu$,
we have $p(\mu_{i}^{+}-\mu_{i}^{-})\geq\left(\tfrac{1}{2}-p\right)\mu\geq\tfrac{1}{3}\mu$.
We now have
\[
\log_{p}\left(\frac{p\mu_{i}^{+}}{\mu}\right)\le\frac{\epsilon_{i}'}{p(\mu_i^+-\mu_i^-)}\leq\frac{3\epsilon_{i}'}{\mu}\leq\frac{3\epsilon_{i}'}{p\mu_{i}^{+}}.
\]
 Hence,
\[
\frac{p\mu_{i}^{+}}{\mu}\geq p^{3\epsilon_{i}'/(p\mu_{i}^{+})}=\exp\left(-\frac{3\epsilon_{i}'\ln(1/p)}{p\mu_{i}^{+}}\right).
\]
 Using the fact that $1-e^{-x}\leq x$ for all $x\geq0$, we have
\begin{align*}
\left(1-p\right)\frac{\mu_{i}^{-}}{\mu}=1-\frac{p\mu_{i}^{+}}{\mu}\leq1-\exp\left(-\frac{3\epsilon_{i}'\ln(1/p)}{p\mu_{i}^{+}}\right)\leq\frac{3\epsilon_{i}'\ln(1/p)}{p\mu_{i}^{+}}.
\end{align*}
 This implies
$$\mu_{i}^{-}\leq\left(\frac{\mu}{p\mu_{i}^{+}}\right) \left(\frac{3}{1-p}\right) \epsilon_{i}'\ln(1/p)\leq \frac{6\epsilon_{i}'\ln(1/p)}{1-e^{-2}} \leq C_{2}\epsilon_{i}'\ln(1/p),$$
 provided we choose $C_{2}\geq\frac{6}{1-e^{-2}}$. We now
have
\[
pI_{i}[f]\geq p(\mu_{i}^{+}-\mu_{i}^{-})=\mu-\mu_{i}^{-}\geq\mu-C_{2}\epsilon_{i}'\ln(1/p)\geq\mu-C_{2}\epsilon\mu\ln(1/p),
\]
 so Case (2) occurs, as asserted.
\end{proof}

\subsection{Either $\mu$ is fairly small, or very close to 1}

Here, we show that there exists a constant $c_{4}>0$ such that either $\mu = 1-O\left(\frac{\epsilon \ln (1/p)}{\log\left(\frac{1}{\epsilon \ln (1/p)}\right)}\right)$ (i.e., $\mu$ is very close to 1),
or else $\mu < 1-c_{4}$ (i.e., $\mu$ is bounded away from 1). For a general $f$ (and $0 < p \leq 1/2)$, we obtain this by applying the $p$-biased isoperimetric inequality to the complement of $f$: $\tilde{f}\left(x\right)=1-f\left(x\right)$. For monotone $f$ (and $0 < p < 1$), we apply the $p$-biased isoperimetric inequality to the
dual of $f$: $f^{\ast}(x) =1-f\left(\overline{x}\right) = 1-f(1-x)$.
\begin{claim}
\label{claim:expectation-constraint-1}
Let $0 < p\leq1/2$.
Then we either have
\[
\mu\geq1-\frac{C_{3}\epsilon\ln(1/p)}{\ln\left(\frac{1}{\epsilon\ln(1/p)}\right)},
\]
 or else $\mu\leq 1-c_{4}$, where $C_{3},c_{4}>0$ are absolute constants with $c_4 < 1/4$.
 \end{claim}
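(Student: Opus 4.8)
The plan is to apply the biased isoperimetric inequality (Theorem~\ref{thm:edge-iso-biased}) not to $f$ but to its complement $\tilde f = 1 - f$, which has $\mu_p(\tilde f) = 1 - \mu$ and $I^p[\tilde f] = I^p[f] = I$, since flipping a single coordinate changes the value of $f$ iff it changes the value of $\tilde f$. This works precisely because $0 < p \le 1/2$, so Theorem~\ref{thm:edge-iso-biased} applies to the (non-monotone) function $\tilde f$ with no further hypotheses. From our running assumption $pI = \mu(\log_p \mu + \epsilon)$ and the inequality $pI \ge (1-\mu)\log_p(1-\mu)$ we obtain
\[
\mu(\log_p \mu + \epsilon) \ge (1-\mu)\log_p(1-\mu).
\]

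Next I would convert this to natural logarithms (multiplying through by $\ln(1/p) > 0$, which turns $\log_p$ into $-\ln$ and $\epsilon$ into $\epsilon' = \epsilon\ln(1/p)$) to get
\[
-\mu\ln\mu + \mu\epsilon' \ge -(1-\mu)\ln(1-\mu),
\]
i.e.
\[
(1-\mu)\ln\frac{1}{1-\mu} \le \mu\ln\frac{1}{\mu} + \mu\epsilon' \le \mu\ln\frac{1}{\mu} + \epsilon'.
\]
Write $t = 1-\mu \in [0,1]$. Then $\mu\ln(1/\mu) = (1-t)\ln\frac{1}{1-t} \le t$ using $\ln\frac{1}{1-t} \le \frac{t}{1-t}$ (valid for $t \in [0,1)$), actually more simply $-(1-t)\ln(1-t) \le t$ for all $t \in [0,1]$. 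Hence we arrive at the clean one-variable inequality
\[
t\ln(1/t) \le t + \epsilon'.
\]

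Now it remains a purely elementary exercise to show that $t\ln(1/t) \le t + \epsilon'$ forces $t$ to be either small (of order $\epsilon'/\ln(1/\epsilon')$) or bounded below by an absolute constant. I would argue by dichotomy on whether $t \ge 1/e^2$, say. If $t < 1/e^2$ then $\ln(1/t) \ge 2$, so $t\ln(1/t) \le t + \epsilon'$ gives $t\ln(1/t) - t \le \epsilon'$, i.e. $t(\ln(1/t) - 1) \le \epsilon'$, so $t \le \epsilon'/(\ln(1/t)-1) \le 2\epsilon'/\ln(1/t)$. Plugging this crude bound back in (bootstrapping once: $\ln(1/t) \ge \ln(\ln(1/t)/(2\epsilon')) \ge \tfrac12\ln(1/\epsilon')$ once $\epsilon'$ is smaller than an absolute constant, which holds since $\epsilon \le c_0/\ln(1/p)$ means $\epsilon'\le c_0$) yields $t = O(\epsilon'/\ln(1/\epsilon'))$, which is exactly the first alternative with an appropriate absolute constant $C_3$. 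If instead $t \ge 1/e^2$, then $1-\mu = t \ge c_4$ for the absolute constant $c_4 = 1/e^2$, giving the second alternative. (If $t$ lies in between, note that the function $t \mapsto t\ln(1/t) - t - \epsilon'$ is continuous, so one can simply take $c_4$ to be any fixed positive value such as $1/e^2$ and absorb the intermediate regime into either case.)

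I do not expect any serious obstacle here; the only mildly delicate point is the bootstrapping step, where one must be careful that $\epsilon'$ is below an absolute threshold so that $\ln(1/t)$ can be bounded below by a constant multiple of $\ln(1/\epsilon')$ --- but this is guaranteed by the hypothesis $\epsilon \le c_0/\ln(1/p)$, i.e. $\epsilon' \le c_0$, after possibly shrinking $c_0$. The constants $C_3, c_4$ come out completely explicitly ($c_4 = e^{-2}$ works, and $C_3$ is an absolute constant of size a few units). The monotone analogue (for $p$ possibly exceeding $1/2$) is handled identically but applying Theorem~\ref{thm:edge-iso-biased} to the dual $f^*(x) = 1 - f(1-x)$ instead of the complement, since $f^*$ is monotone increasing whenever $f$ is, and $\mu_p(f^*) = 1 - \mu_{1-p}(f)$ --- one would spell this out as a separate claim in the monotone case, but the analytic heart is the same one-variable estimate.
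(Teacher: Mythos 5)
Your proposal is correct and follows essentially the same route as the paper: apply Theorem~\ref{thm:edge-iso-biased} to the complement $\tilde f$, combine with $pI=\mu(\log_p\mu+\epsilon)$ to get $\mu(\log_p\mu+\epsilon)\ge(1-\mu)\log_p(1-\mu)$, and reduce to the one-variable estimate $\delta(\ln(1/\delta)-O(1))\le\epsilon\ln(1/p)$ for $\delta=1-\mu$, concluding by the dichotomy on whether $\delta$ exceeds an absolute constant. Your version is marginally cleaner in the elementary step (using $-(1-t)\ln(1-t)\le t$ and spelling out the bootstrap that the paper leaves implicit), but there is no substantive difference.
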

\begin{proof}
Note that $\mu_{p}(\tilde{f})=1-\mu_{p}(f)$ and that $I^{p}[\tilde{f}]=I^{p}[f]$.
By assumption, we have $pI[f] = \mu(\log_{p}\mu+\epsilon)$. On the
other hand, applying Theorem \ref{thm:edge-iso-biased} to $\tilde{f}$, we obtain
\[
pI[f]=pI[\tilde{f}]\geq\left(1-\mu\right)\log_{p}\left(1-\mu\right).
\]
 Combining these two facts, we obtain
\[
\mu\left(\log_{p}\mu+\epsilon\right)\geq\left(1-\mu\right)\log_{p}\left(1-\mu\right).
\]
Suppose that $\delta:=1-\mu \le c_{4}$, where $c_4>0$ is to be chosen later. Then
\begin{align*}
\delta\log_{p}\left(\delta\right) & \le\left(1-\delta\right)\left(\log_{p}\left(1-\delta\right)+\epsilon\right)\\
 & =\left(1-\delta\right)\left(\frac{\ln\left(\frac{1}{1-\delta}\right)}{\ln\left(\frac{1}{p}\right)}+\epsilon\right)\le\frac{2\delta}{\ln\left(\frac{1}{p}\right)}+\epsilon,
\end{align*}
where the last inequality holds provided $c_{4}$ is sufficiently small.
Hence,
\[
\delta\left(\ln\left(\frac{1}{\delta}\right)-2\right)\le\epsilon\ln\left(\frac{1}{p}\right).
\]
Provided $c_4$ is sufficiently small, this implies that
$$\delta = O \left(\frac{\epsilon \ln(1/p)}{\ln\left(\frac{1}{\epsilon \ln(1/p)}\right)}\right),$$
proving the claim.
\end{proof}

\begin{claim}
\label{claim:expectation-constraint} For any $\eta >0$, there exist $C_3 = C_3(\eta)$ and $c_4 = c_4(\eta)>0$ such that the following holds. Suppose that $0 < p\leq1-\eta$, and suppose that $f$ is monotone increasing. Then we either have
\[
\mu\geq1-\frac{C_{3}\epsilon}{\ln\left(\frac{1}{\epsilon}\right)},
\]
 or else $\mu\leq 1-c_{4}$.
 \end{claim}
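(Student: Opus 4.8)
The plan is to run the argument of Claim~\ref{claim:expectation-constraint-1}, but with the complement $\tilde f = 1-f$ replaced by the \emph{dual} $f^{\ast}$, defined by $f^{\ast}(x) := 1 - f(\bar x)$, where $\bar x := (1-x_1,\ldots,1-x_n)$. Since $f$ is monotone increasing, so is $f^{\ast}$, and hence, by Theorem~\ref{thm:edge-iso-biased}, the $p$-biased isoperimetric inequality applies to $f^{\ast}$ at \emph{every} bias in $(0,1)$; applying it at bias $1-p$ (rather than applying Theorem~\ref{thm:edge-iso-biased} to the complement at bias $p$) is precisely what lets the argument survive past $p = 1/2$.

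First I would record the identities $\mu_{1-p}(f^{\ast}) = 1-\mu_p(f)$ and $I^{1-p}[f^{\ast}] = I^p[f]$. Both follow because coordinate-complementation $x \mapsto \bar x$ is an automorphism of $Q_n$ which pushes $\mu_{1-p}$ forward to $\mu_p$ (on vertices, and correspondingly on edges): it carries $\{f=0\}$ to $\{f^{\ast}=1\}$, so $\mu_{1-p}(f^{\ast}) = \mu_p(\{f=0\}) = 1-\mu_p(f)$, and it carries $\partial\{f=0\} = \partial\{f=1\}$ to $\partial\{f^{\ast}=1\}$, so $I^{1-p}[f^{\ast}] = I^p[f]$. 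Feeding these into Theorem~\ref{thm:edge-iso-biased} applied to $f^{\ast}$ at bias $1-p$ yields the ``dual'' isoperimetric bound
\[
(1-p)\,I^p[f] \;\ge\; (1-\mu)\log_{1-p}(1-\mu), \qquad \mu := \mu_p(f).
\]

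Combining this with the normalisation $pI^p[f] = \mu(\log_p\mu + \epsilon)$ (in force throughout Section~\ref{sec:main}) and writing $\delta := 1-\mu$, I would arrive at
\[
\frac{(1-p)\ln\tfrac{1}{1-p}}{p}\,(1-\delta)\!\left(\frac{\ln\tfrac{1}{1-\delta}}{\ln(1/p)} + \epsilon\right) \;\ge\; \delta\ln(1/\delta).
\]
Assume $\delta \le c_4$, with $c_4 = c_4(\eta)$ to be fixed. Using the elementary bounds $\frac{(1-p)\ln\tfrac{1}{1-p}}{p} \le 1$ (equivalently $\ln t \le t-1$ with $t=\tfrac{1}{1-p}$), $\ln\tfrac{1}{1-\delta} \le 2\delta$ for $\delta \le \tfrac12$, and $\ln(1/p) \ge \ln\tfrac{1}{1-\eta} =: L(\eta) > 0$ --- the last being the only place the hypothesis $p \le 1-\eta$ is used --- this reduces to $\delta\ln(1/\delta) \le \tfrac{2\delta}{L(\eta)} + \epsilon$, i.e.\ $\delta\bigl(\ln(1/\delta) - \tfrac{2}{L(\eta)}\bigr) \le \epsilon$. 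Choosing $c_4(\eta)$ small enough that $c_4(\eta) \le 1/e$ and $\ln(1/c_4(\eta)) \ge \tfrac{4}{L(\eta)}$, the bracket is at least $\tfrac12\ln(1/\delta)$ whenever $\delta \le c_4(\eta)$, so $\delta\ln(1/\delta)\le 2\epsilon$; a routine estimate (using that $\epsilon$ is small, which holds since $\epsilon \le c_0/\ln(1/p) \le c_0/L(\eta)$ once $c_0 = c_0(\eta)$ is taken small enough) then gives $\delta \le \frac{C_3\epsilon}{\ln(1/\epsilon)}$ for a suitable $C_3 = C_3(\eta)$. This is the asserted dichotomy.

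The only genuinely new ingredient relative to Claim~\ref{claim:expectation-constraint-1} is the passage from ``complement at bias $p$'' to ``dual at bias $1-p$''; once the bound $(1-p)I^p[f] \ge (1-\mu)\log_{1-p}(1-\mu)$ is in hand, the rest mirrors that proof. I therefore expect the only mildly delicate part to be the bookkeeping of the constants --- making $c_4(\eta)$ small enough for the bracket estimate while letting $C_3(\eta)$ and $c_0(\eta)$ absorb the factor $1/L(\eta)$, and checking these choices are consistent with the admissible range of $\epsilon$ --- which is routine.
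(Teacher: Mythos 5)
Your proposal is correct and is essentially the paper's own proof: the paper likewise passes to the dual $f^{\ast}(x)=1-f(\bar x)$, applies Theorem~\ref{thm:edge-iso-biased} (monotone case) at bias $1-p$ to get $(1-p)I^p[f]\ge(1-\mu)\log_{1-p}(1-\mu)$, and then runs the same $\delta\ln(1/\delta)\lesssim\epsilon$ estimate. Your explicit bound $\frac{(1-p)\ln\frac{1}{1-p}}{p}\le 1$ in place of the paper's $\ln\frac{1}{1-p}=\Theta_\eta(p)$ is only a cosmetic difference in the constant bookkeeping.
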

\begin{proof}
Note that $f^{\ast}$ is monotone increasing, since $f$ is. Moreover, $\mu_{1-p}(f^{\ast})=1-\mu_{p}(f)$ and $I^{1-p}[f^{\ast}]=I^{p}[f]$. By assumption, we have $pI^p[f] = \mu(\log_{p}\mu+\epsilon)$. On the
other hand, applying Theorem \ref{thm:edge-iso-biased} to $f^{\ast}$,
we obtain
\[
(1-p)I^{p}[f]=(1-p)I^{1-p}[f^{\ast}]\geq\left(1-\mu\right)\log_{1-p}\left(1-\mu\right).
\]
 Combining these two facts, we obtain
\begin{align*}
\mu\left(\log_{p}\mu+\epsilon\right) & \geq\frac{p}{1-p}\left(1-\mu\right)\log_{1-p}\left(1-\mu\right).
\end{align*}
Suppose that $\delta:=1-\mu\leq c_{4}$, where $c_4 = c_4(\eta)>0$ is to be chosen later. Then
\begin{align}
\frac{p}{1-p}\delta\log_{1-p}\left(\delta\right) & \le\left(1-\delta\right)\left(\log_{p}\left(1-\delta\right)+\epsilon\right)\nonumber \\
 & =\left(1-\delta\right)\left(\frac{\ln\left(\frac{1}{1-\delta}\right)}{\ln\left(\frac{1}{p}\right)}+\epsilon\right)\le\frac{2\delta}{\ln\left(\frac{1}{p}\right)}+\epsilon,\label{eq:measure can't be too much close to 1}
\end{align}
where the last inequality holds provided $c_{4}$ is sufficiently small. Observe that $\ln\left(\frac{1}{1-p}\right)=\Theta_{\eta}\left(p\right)$.
Hence,
\begin{equation}
\frac{p}{1-p}\delta\log_{1-p}\left(\delta\right)=\frac{p}{1-p}\delta\frac{\ln\left(\frac{1}{\delta}\right)}{\ln\left(\frac{1}{1-p}\right)}=
\Theta_{\eta}\left(\frac{p}{1-p}\delta
\frac{\ln\left(\frac{1}{\delta}\right)}{p}\right)=\Theta_{\eta}\left(\delta\ln\left(\frac{1}{\delta}\right)\right).
\label{eq:measure can't be too much close to 2}
\end{equation}
Combining (\ref{eq:measure can't be too much close to 1})
and (\ref{eq:measure can't be too much close to 2}), we obtain
\[
\Theta_{\eta} (\delta\ln(1/\delta)) -\frac{2\delta}{\eta} \leq \Theta_{\eta} (\delta\ln(1/\delta)) -\frac{2\delta}{\ln\left(1/(1-\eta)\right)} \leq \Theta_{\eta} (\delta\ln(1/\delta)) -\frac{2\delta}{\ln\left(1/p\right)}\leq \epsilon,
\]
using the fact that $1-\eta \leq e^{-\eta}$ for all $\eta \in \mathbb{R}$. This in turn implies that
$$\delta = O_{\eta}\left(\frac{\epsilon}{\ln\left(\frac{1}{\epsilon}\right)}\right)$$
provided $c_4$ is sufficiently small depending on $\eta$, proving the claim.
\end{proof}

\subsection{There exists an influential coordinate}

We now show that unless $\mu$ is very close to $1$, there
must exist a coordinate whose influence is large. This coordinate
will be used in the inductive step of the proof of our two stability theorems. First, we deal with the case of small $p$ and general $f$ (i.e., $f$ not necessarily monotone increasing).
\begin{claim}
\label{claim:influential-coordinate} For any $\zeta_0 \in (0,c_4/2)$, the following holds provided $c_0$ is sufficiently small (depending on the absolute constants $C_2,C_3$ and $c_4$). Suppose that $0 < p < \zeta_0$ and $\epsilon \ln(1/p) \leq c_0$. If $\mu<1-\frac{C_{3}\epsilon\ln(1/p)}{\ln\left(\frac{1}{\epsilon\ln(1/p)}\right)}$,
then there exists $i\in[n]$ for which Case (2) of Claim \ref{claim:2-cases-small-p} occurs, i.e. $\mu_i^- \leq C_2 \epsilon_i' \ln(1/p)$ and $p I_i[f] \geq (1-C_2 \epsilon \ln(1/p))\mu$.
 (Here, $C_2$ is the absolute constant from Claim \ref{claim:2-cases-small-p}, and $C_3,c_4$ are the absolute constants from Claim \ref{claim:expectation-constraint-1}.)
 \end{claim}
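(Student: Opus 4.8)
The plan is to deduce the claim from Claims~\ref{claim:expectation-constraint-1} and~\ref{claim:2-cases-small-p} by induction on $n$. First, Claim~\ref{claim:expectation-constraint-1} shows that the hypothesis $\mu<1-C_{3}\epsilon\ln(1/p)/\ln(1/(\epsilon\ln(1/p)))$ already forces $\mu\le1-c_{4}$, so we may work under that stronger assumption. By Claim~\ref{claim:2-cases-small-p}, every coordinate $i$ is in Case (1) or Case (2) of that claim, so it suffices to rule out the possibility that \emph{every} coordinate is in Case (1). Write $\epsilon':=\epsilon\ln(1/p)$, as in the statement.

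Suppose, for contradiction, that every $i\in[n]$ is in Case (1) of Claim~\ref{claim:2-cases-small-p}. Then for each $i$ we have $\mu_{i}^{-}\ge(1-C_{2}\epsilon')\mu$, and, since $\epsilon_{i}^{\pm}\ge0$ by Theorem~\ref{thm:edge-iso-biased} and hence $\epsilon_{i}'\le\mu\epsilon$, also $pI_{i}[f]\le C_{2}\epsilon_{i}'\ln(1/p)\le C_{2}\epsilon'\mu$. Now fix the coordinate $n$ and pass to the restriction $g:=f_{n\to0}$, which lives on $\{0,1\}^{[n]\setminus\{n\}}$ and has $\mu_{p}(g)=\mu_{n}^{-}$ and iso-deficit $\epsilon_{n}^{-}$. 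I claim $g$ satisfies the hypotheses of the claim in dimension $n-1$. Indeed, $\mu_{p}(g)=\mu_{n}^{-}\le\mu/(1-p)\le(1-c_{4})/(1-p)\le1-c_{4}/2$ because $p<\zeta$ is a sufficiently small absolute constant; and, using the identity in (\ref{eq:rearranged-1})/(\ref{eq:rearrenged-2}) together with concavity of $t\mapsto t\log_{p}t$, one gets $(1-p)\mu_{n}^{-}\epsilon_{n}^{-}\le\mu\epsilon-pI_{n}[f]-p\mu_{n}^{+}\epsilon_{n}^{+}\le\mu\epsilon$, whence $\epsilon_{n}^{-}\le\epsilon/\bigl((1-p)(1-C_{2}\epsilon')\bigr)=(1+o(1))\epsilon$ as $\zeta,c_{0}\to0$. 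Consequently $\epsilon_{n}^{-}\ln(1/p)=O(\epsilon')=O(c_{0})$, so the relevant threshold $1-C_{3}\epsilon_{n}^{-}\ln(1/p)/\ln(1/(\epsilon_{n}^{-}\ln(1/p)))$ exceeds $1-c_{4}/2\ge\mu_{p}(g)$ once $c_{0}$ is small, and $g$ indeed falls under the inductive hypothesis.

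Applying the induction hypothesis to $g$ (in its contrapositive form: $g$ cannot have all coordinates in Case (1), since its measure is bounded away from $1$), there is a coordinate $j\ne n$ in Case (2) of Claim~\ref{claim:2-cases-small-p} \emph{for} $g$; in particular $pI_{j}[g]\ge(1-C_{2}\epsilon_{n}^{-}\ln(1/p))\mu_{n}^{-}$. Now lift back to $f$: since $I_{j}[f]=pI_{j}[f_{n\to1}]+(1-p)I_{j}[f_{n\to0}]\ge(1-p)I_{j}[g]$, we get $pI_{j}[f]\ge(1-p)(1-C_{2}\epsilon_{n}^{-}\ln(1/p))\mu_{n}^{-}\ge(1-p)(1-3C_{2}\epsilon')\mu>\tfrac12\mu$, using $p<\zeta$ and $\epsilon'\le c_{0}$ small. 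But coordinate $j$ was assumed to be in Case (1) for $f$, so $pI_{j}[f]\le C_{2}\epsilon'\mu$; combining, $\tfrac12<C_{2}\epsilon'$, contradicting $\epsilon'\le c_{0}$ once $c_{0}<1/(2C_{2})$. This contradiction completes the inductive step. The base case is small $n$, where the only configuration in which all coordinates are in Case (1) and $\mu$ is bounded away from $1$ is a near-antidictatorship (e.g. $f=1_{\{x_{1}=0\}}$), and a direct computation shows that such an $f$ violates the hypothesis provided $C_{3}$ is chosen large enough (enlarging $C_{3}$ only strengthens the hypothesis).

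The main obstacle is the middle step: verifying that the restriction $f_{n\to0}$ still satisfies the inductive hypothesis --- i.e. controlling both its measure (easy, since $p$ is tiny) and, more delicately, its iso-deficit $\epsilon_{n}^{-}$. The bound $\epsilon_{n}^{-}\le\epsilon/((1-p)(1-C_{2}\epsilon'))$ is only a $(1+o(1))$-blow-up of $\epsilon$, which is harmless because the inductive hypothesis is invoked exactly once and every other inequality in the argument has constant-factor slack; nonetheless this forces one to fix the absolute constants $\zeta$, $c_{0}$, $C_{3}$ in the right order (with $\zeta,c_{0}$ small relative to $1/C_{2}$ and to $c_{4}$, and $C_{3}$ large), so that the slightly enlarged restricted deficit still lies in the admissible range.
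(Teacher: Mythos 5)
Your overall architecture matches the paper's: induct on $n$, suppose for contradiction that every coordinate is in Case (1) of Claim \ref{claim:2-cases-small-p}, restrict to a half-cube, invoke the induction hypothesis there to find an influential coordinate $j$, lift back via $I_j[f]\ge(1-p)I_j[f_{i\to0}]$, and contradict Case (1) for $j$. But there is a genuine gap at exactly the point you flag as ``the main obstacle'': the control of the deficit of the restriction. You restrict to $\{x_n=0\}$ unconditionally and obtain only $\epsilon_n^-\le\epsilon/\bigl((1-p)(1-C_2\epsilon\ln(1/p))\bigr)$, i.e.\ $\epsilon$ times a factor strictly greater than $1$. This is not harmless. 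The induction hypothesis is the claim itself in dimension $n-1$, and the claim (like every statement in Section \ref{sec:main}) is proved only under the standing hypothesis that the deficit is at most $c_0/\ln(1/p)$; your restricted deficit may exceed this bound. More fundamentally, the blow-up factor is at least $1/(1-p)>1$ at every level of the recursion, and the recursion has depth up to $n$, so the deficits compound like $(1-p)^{-n}$, which is unbounded; no one-off choice of $\zeta,c_0,C_3$ can absorb this. Your remark that ``the inductive hypothesis is invoked exactly once'' conflates one invocation per level with one invocation in total.

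The paper closes this gap with a case split that your argument is missing. If $\epsilon_i^-\ge\epsilon$ for every $i$, then the non-negativity of $\epsilon_i'=p\mu_i^+(\epsilon-\epsilon_i^+)+(1-p)\mu_i^-(\epsilon-\epsilon_i^-)$ (from (\ref{eq:rearranged-1})) forces $\epsilon_i^+\le\epsilon$, so one restricts to $\{x_i=1\}$ instead, where the deficit genuinely does not grow; otherwise one fixes a coordinate $i$ with $\epsilon_i^-<\epsilon$ and restricts to $\{x_i=0\}$ for that particular $i$. In both branches the restricted function satisfies $\epsilon_{i}^{\pm}\le\epsilon\le c_0/\ln(1/p)$, and the lifting/contradiction computation then goes through essentially as you wrote it (using $I_j[f]\ge pI_j[f_{i\to1}]$ in the first branch and $pI_j[f]\ge p(1-p)I_j[f_{i\to0}]$ in the second). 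Separately, your base case is too loose: for $n=1$ the correct argument is that $\mu\le1-c_4$ together with $p<\zeta<c_4$ rules out $f=1_{\{x_1=0\}}$ (whose measure is $1-p>1-c_4$), leaving $f\equiv0$ or $f=1_{\{x_1=1\}}$, both of which have $\mu_1^-=0$ and hence lie in Case (2); you cannot ``enlarge $C_3$'', since it is fixed by Claim \ref{claim:expectation-constraint-1}.
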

\begin{proof}
We prove the claim by induction on $n$.

If $n=1$ and $\mu<1-\frac{C_{3}\epsilon\ln(1/p)}{\ln\left(\frac{1}{\epsilon\ln(1/p)}\right)}$,
then by Claim \ref{claim:expectation-constraint-1}, we have $\mu<1-c_4$, and therefore $f\equiv0$ or $f=1_{\{x_{1}=1\}}$. (If $f = 1_{\{x_1 = 0\}}$ then $\mu = 1-p > 1-\zeta_0 > 1-c_4/2$.) Hence, we have $\mu_{1}^{-}=0$, so Case (2) must occur for the coordinate 1, verifying the base case.

We now do the inductive step. Let $n\geq2$, and assume the claim
holds when $n$ is replaced by $n-1$. Let $f$ be as in the statement of the claim; then by Claim \ref{claim:expectation-constraint-1}, we have $\mu \leq 1-c_4$. Suppose for a contradiction
that $f$ has Case (1) of Claim \ref{claim:2-cases-small-p} occurring for each $i\in[n]$. First, suppose that $\epsilon_{i}^{-}\geq\epsilon$
for each $i\in[n]$. Fix any $i\in[n]$. By (\ref{eq:rearranged-1}),
we have $0\leq\epsilon_{i}'\leq p(\epsilon-\epsilon_{i}^{+})\mu_{i}^{+}$,
so $\epsilon_{i}^{+}\leq\epsilon$ and therefore
\begin{equation}
\label{eq:influence-bound-1}
I_{i}[f]\leq \tfrac{1}{p} C_{2}\epsilon_{i}'\ln(1/p)\leq C_{2}\left(\epsilon-\epsilon_{i}^{+}\right)\mu_{i}^{+}\ln(1/p)\leq C_{2}c_{0}\mu_{i}^{+},
\end{equation}
using our assumption that $\epsilon \ln(1/p) \leq c_0$. Hence,
\[
\mu_{i}^{+}-\mu\leq|\mu_{i}^{+}-\mu_{i}^{-}|\le I_{i}[f]\leq C_{2}c_{0}\mu_{i}^{+},
\]
 so
\[
\mu_{i}^{+}\leq\frac{\mu}{1-C_{2}c_{0}} \leq\frac{1-c_{4}}{1-C_{2}c_{0}}<1-\frac{C_3 c_0}{\ln(1/c_0)}\leq 1-\frac{C_{3}\epsilon\ln(1/p)}{\ln\left(\frac{1}{\epsilon\ln(1/p)}\right)}\leq1-\frac{C_{3}\epsilon_{i}^{+}\ln(1/p)}{\ln\left(\frac{1}{\epsilon_{i}^{+}\ln(1/p)}\right)},
\]
 provided $c_{0}$ is sufficiently small (depending on $C_2$, $C_3$ and $c_4$). It follows that $f_{i\to1}$
satisfies the hypothesis of the claim, for each $i\in[n]$. Hence,
by the induction hypothesis, there exists $j\in[n]\setminus\{i\}$
such that $f_{i\to1}$ has Case (2) of Claim \ref{claim:2-cases-small-p}
occurring for the coordinate $j$, so
\[
pI_{j}[f_{i\to1}]\geq\left(1-C_{2}\epsilon_{i}^{+}\ln(1/p)\right)\mu_{i}^{+}.
\]
 We now have
\begin{align*}
I_{j}[f]\geq pI_{j}[f_{i\to1}]\geq\left(1-C_{2}\epsilon_{i}^{+}\ln(1/p)\right)\mu_{i}^{+}\geq(1-C_{2}\epsilon\ln(1/p))\mu_{i}^{+}\geq(1-C_{2}c_{0})\mu_{i}^{+},
\end{align*}
 but this contradicts the fact that (\ref{eq:influence-bound-1})
holds when $i$ is replaced by $j$, provided $c_{0}$ is sufficiently
small (depending on $C_2$).

We may assume henceforth that there exists $i\in[n]$ such that $\epsilon_{i}^{-}<\epsilon$.
Fix such a coordinate $i$. Since Case (1) occurs for the coordinate
$i$, we have
\begin{equation}\label{eq:mu-lower}
\mu_{i}^{-}\geq(1-C_{2}\epsilon\ln(1/p))\mu\geq(1-C_{2}c_{0})\mu.
\end{equation}
 On the other hand, we have
\begin{align*}
\mu_{i}^{-}& \le\frac{\mu}{1-p} \le\frac{1-c_4}{1-\zeta_0} < \frac{1-c_4}{1-c_4/2} \leq 1-\frac{C_3 c_0}{\ln(1/c_0)}\\
&\leq 1-\frac{C_{3}\epsilon\ln(1/p)}{\ln\left(\frac{1}{\epsilon\ln(1/p)}\right)} <1-\frac{C_{3}\epsilon_{i}^{-}\ln(1/p)}{\ln\left(\frac{1}{\epsilon_{i}^{-}\ln(1/p)}\right)},
\end{align*}
provided $c_0$ is sufficiently small (depending on $C_3$ and $c_4$). Hence, $f_{i\to0}$ satisfies
the hypotheses of the claim. Therefore, by the induction hypothesis, there
exists $j\in[n]\setminus\{i\}$ such that $f_{i\to0}$ has Case (2)
of Claim \ref{claim:2-cases-small-p} occurring for the coordinate
$j$, so
\[
pI_{j}[f_{i\to0}]\geq\left(1-C_{2}\epsilon_{i}^{-}\ln(1/p)\right)\mu_{i}^{-}.
\]
 Therefore, we have
\begin{align*}
pI_{j}[f] & \geq p(1-p)I_{j}[f_{i\to0}]\geq(1-p)\left(1-C_{2}\epsilon_{i}^{-}\ln(1/p)\right)\mu_{i}^{-}\\
 & >(1-p)\left(1-C_{2}\epsilon\ln(1/p)\right)(1-C_{2}c_{0})\mu\geq \tfrac{1}{2}(1-C_{2}c_{0})^{2} \mu,
\end{align*}
using (\ref{eq:mu-lower}) for the third inequality, contradicting the fact that $f$ satisfies Case (1) of Claim \ref{claim:2-cases-small-p} for the coordinate
$j$, provided $c_0$ is sufficiently small (depending on $C_2$). This completes the inductive step, proving the claim.
\end{proof}

Now we deal with the case of $p$ bounded away from $0$ and bounded from above by $1/2$, and arbitrary $f$.
\begin{claim}
\label{claim:influential-coordinate-2}  For each $\zeta >0$, the following holds provided $c_0$ is sufficiently small depending on $\zeta$. Suppose that $\zeta < p \leq 1/2$ and $\epsilon \ln(1/p) \leq c_0$. If $\mu<1-\frac{C_{3}\epsilon\ln(1/p)}{\ln\left(\frac{1}{\epsilon\ln(1/p)}\right)}$,
then there exists $i\in[n]$ for which Case (2) of Claim \ref{claim:2-cases-constant-p} occurs, i.e. $\min\{\mu_i^-,\mu_i^+\} \leq C_2 \epsilon_i'$ and $I_i[f] \geq (1-C_2 \epsilon)\mu$.
 (Here, $C_2 =C_2(\zeta)$ is the constant from Claim \ref{claim:2-cases-constant-p}, and $C_3$ is the absolute constant from Claim \ref{claim:expectation-constraint-1}.)
 \end{claim}

\begin{proof}
If $n=1$ and $\mu<1-\frac{C_{3}\epsilon\ln(1/p)}{\ln\left(\frac{1}{\epsilon\ln(1/p)}\right)}$,
then we must have either $f\equiv0$, $f=1_{\{x_{1}=1\}}$ or $f =1_{\{x_1 = 0\}}$. Hence, we have $\min\{\mu_i^+,\mu_{1}^{-}\}=0$, so Case (2) of Claim \ref{claim:2-cases-constant-p} must occur for the coordinate 1, verifying the base case.

We now do the inductive step. Let $n\geq2$, and assume the claim holds when $n$ is replaced by $n-1$. Let $f$ be as in the statement of the claim; then by Claim \ref{claim:expectation-constraint-1}, we have $\mu \leq 1-c_4$. Suppose for a contradiction
that $f$ has Case (1) of Claim \ref{claim:2-cases-constant-p} occurring for each $i\in[n]$. First, suppose that $\epsilon_{i}^{-}\geq\epsilon$
for each $i\in[n]$. Then almost exactly the same argument as in the proof of Claim \ref{claim:influential-coordinate} yields a contradiction, provided $c_0$ is sufficiently small depending on $\zeta$. Therefore, we may assume henceforth that there exists $i \in [n]$ such that $\epsilon_i^- < \epsilon$. By assumption, Case 1 of Claim \ref{claim:2-cases-constant-p} occurs for the coordinate $i$, and therefore $\min\{\mu_i^+,\mu_i^-\} \geq (1-C_2 \epsilon)\mu$. It follows that
\begin{align*} \mu_i^- & = \frac{\mu - p\mu_i^+}{1-p} \leq \frac{1 - p(1-C_2\epsilon)}{1-p}\mu = \mu + \frac{pC_2\epsilon \mu}{1-p} \leq 1-c_4 +  C_2\epsilon < 1-\frac{C_3c_0}{\ln(1/c_0)}\\
& \leq 1-\frac{C_{3}\epsilon\ln(1/p)}{\ln\left(\frac{1}{\epsilon\ln(1/p)}\right)} <1-\frac{C_{3}\epsilon_{i}^{-}\ln(1/p)}{\ln\left(\frac{1}{\epsilon_{i}^{-}\ln(1/p)}\right)},
\end{align*}
provided $c_0$ is sufficiently small depending on $\zeta$. Hence, $f_{i\to0}$ satisfies
the hypotheses of the claim. Therefore, by the induction hypothesis, there
exists $j\in[n]\setminus\{i\}$ such that $f_{i\to0}$ has Case (2)
of Claim \ref{claim:2-cases-small-p} occurring for the coordinate
$j$, so
\[
I_{j}[f_{i\to0}]\geq\left(1-C_{2}\epsilon_{i}^{-} \right)\mu_{i}^{-}.
\]
 We now have
\begin{align*}
I_{j}[f] & \geq (1-p)I_{j}[f_{i\to0}]\geq (1-p)\left(1-C_{2}\epsilon_{i}^{-}\right)\mu_{i}^{-}\\
 & >(1-p)\left(1-C_{2}\epsilon \right)^2 \mu\geq \tfrac{1}{2} (1-C_{2}c_{0} / \ln(2) )^{2} \mu,
\end{align*}
 contradicting the fact that $f$ satisfies Case (1) of Claim \ref{claim:2-cases-constant-p} for the coordinate
$j$, provided $c_0$ is sufficiently small depending on $C_2$ (i.e., on $\zeta$). This completes the inductive step, proving the claim.
\end{proof}

Finally, we deal with the case of monotone $f$ and all $p$ bounded away from 1.

\begin{claim}
\label{claim:influential-coordinate-3}
For each $\eta>0$, the following holds provided $c_0$ is sufficiently small depending on $\eta$. Let $0 < p \leq 1-\eta$, and suppose $f$ is monotone increasing. If $\mu < 1-\frac{C_{3}\epsilon\ln(1/p)}{\ln\left(\frac{1}{\epsilon\ln(1/p)}\right)}$, then there exists $i \in [n]$ for which Case (2) of Claim \ref{claim:mon-2-cases} occurs. (Here, $C_3 = C_3(\eta)$ is the constant from Claim \ref{claim:expectation-constraint}.)
\end{claim}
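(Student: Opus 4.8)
The plan is to follow the structure of the proofs of Claims~\ref{claim:influential-coordinate} and~\ref{claim:influential-coordinate-2}, arguing by induction on $n$, but with the general-$p$ ingredients replaced by their monotone counterparts: the dichotomy of Claim~\ref{claim:mon-2-cases} in place of Claim~\ref{claim:2-cases-small-p}, and the measure constraint of Claim~\ref{claim:expectation-constraint} in place of Claim~\ref{claim:expectation-constraint-1}. We shall use freely the identities $I_i[f]=\mu_i^+-\mu_i^-$, $pI_i[f]=\mu-\mu_i^-$ and $\mu_i^+-\mu=(1-p)I_i[f]$, which hold because $f$ is monotone increasing; in particular $\mu_i^-\le\mu_i^+$ for every $i$, so equation~(\ref{eq:rearranged-1}) applies for every $i\in[n]$, giving $0\le\epsilon_i'\le\mu\epsilon$, and whenever in addition $\epsilon_i^-\ge\epsilon$ the last term of~(\ref{eq:rearranged-1}) is nonpositive, so $\epsilon_i^+\le\epsilon$ and $\epsilon_i'\le p\mu_i^+\epsilon$. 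Finally, a nonzero monotone increasing $f$ has $\mu\ge p^n>0$, and Case~(1) of Claim~\ref{claim:mon-2-cases} at $i$ forces $\mu_i^-\ge(1-C_2\epsilon\ln(1/p))\mu$.

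For the base case $n=1$, a monotone increasing Boolean function on $\{0,1\}$ is $0$, $1$, or $1_{\{x_1=1\}}$; by Claim~\ref{claim:expectation-constraint} the hypothesis on $\mu$ forces $\mu\le1-c_4<1$, ruling out the constant $1$, and in the remaining two cases $\mu_1^-=0$, so Case~(2) of Claim~\ref{claim:mon-2-cases} holds at coordinate~$1$.

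For the inductive step, let $n\ge2$ and assume the claim for $n-1$. If $f\equiv0$ then $\mu_1^-=0$ and Case~(2) holds at coordinate~$1$; so assume $f\not\equiv0$, whence $\mu>0$ and, by Claim~\ref{claim:expectation-constraint}, $\mu\le1-c_4$. Suppose for a contradiction that Case~(1) of Claim~\ref{claim:mon-2-cases} holds for every $i\in[n]$, and split as in Claim~\ref{claim:influential-coordinate}. In the first case, $\epsilon_i^-\ge\epsilon$ for all $i$: then Case~(1), together with $\epsilon_i'\le p\mu_i^+\epsilon$ and $\epsilon\ln(1/p)\le c_0$, gives $I_i[f]\le C_2c_0\mu_i^+$ for every $i$, and hence, via $\mu_i^+-\mu=(1-p)I_i[f]$, that $\mu_i^+\le\mu/(1-C_2c_0)\le(1-c_4)/(1-C_2c_0)$. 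For $c_0$ small enough depending on $\eta$, this bound together with $\epsilon_i^+\le\epsilon$ shows $f_{i\to1}$ satisfies the hypotheses of the claim; the induction hypothesis then gives $j\ne i$ with Case~(2) of Claim~\ref{claim:mon-2-cases} holding for $f_{i\to1}$ at coordinate~$j$, i.e.\ $pI_j[f_{i\to1}]\ge(1-C_2\epsilon_i^+\ln(1/p))\mu_i^+\ge(1-C_2c_0)\mu_i^+$. Since $I_j[f]\ge pI_j[f_{i\to1}]$ and $\mu_i^+\ge\mu$, we obtain $I_j[f]\ge(1-C_2c_0)\mu$, which is incompatible with the bound $I_j[f]\le C_2c_0\mu_j^+\le C_2c_0\mu/(1-C_2c_0)$ furnished by Case~(1) at~$j$, once $c_0$ is small enough that $(1-C_2c_0)^2>C_2c_0$. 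In the second case, $\epsilon_i^-<\epsilon$ for some $i$: since $x\mapsto x/\ln(1/x)$ is increasing on $(0,1)$, the hypothesis on $\mu$ yields $\mu_i^-\le\mu<1-C_3\epsilon\ln(1/p)/\ln(1/(\epsilon\ln(1/p)))\le1-C_3\epsilon_i^-\ln(1/p)/\ln(1/(\epsilon_i^-\ln(1/p)))$, so $f_{i\to0}$ satisfies the hypotheses of the claim, and the induction hypothesis gives $j\ne i$ with Case~(2) holding for $f_{i\to0}$ at coordinate~$j$, i.e.\ $pI_j[f_{i\to0}]\ge(1-C_2\epsilon_i^-\ln(1/p))\mu_i^-\ge(1-C_2c_0)^2\mu$. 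Then $I_j[f]\ge(1-p)I_j[f_{i\to0}]$ gives $pI_j[f]\ge(1-p)(1-C_2c_0)^2\mu\ge\eta(1-C_2c_0)^2\mu$, contradicting the bound $pI_j[f]\le C_2\epsilon_j'\ln(1/p)\le C_2c_0\mu$ from Case~(1) at~$j$, for $c_0$ small depending on $\eta$. Either way we reach a contradiction, completing the induction.

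The step requiring most care, exactly as in Claims~\ref{claim:influential-coordinate} and~\ref{claim:influential-coordinate-2}, is the bookkeeping of constants: $C_2=C_2(\eta)$, $C_3=C_3(\eta)$ and $c_4=c_4(\eta)$ all depend on $\eta$, and one must check that a single $c_0=c_0(\eta)$ can be chosen small enough to simultaneously put each relevant restriction $f_{i\to1}$ or $f_{i\to0}$ into the hypotheses of the claim and to close both contradictions. The main difference from Claim~\ref{claim:influential-coordinate} is that $p$ need not be bounded away from~$0$; this is precisely why we lean on the monotone identities (so that $I_i[f]=\mu_i^+-\mu_i^-$ and spurious factors of $1/p$ do not proliferate) and on $1-p\ge\eta$ in the second case.
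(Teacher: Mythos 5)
Your proposal is correct and follows essentially the same route as the paper: induction on $n$, contradiction from assuming Case (1) everywhere, with the split on whether $\epsilon_i^-\ge\epsilon$ for all $i$, restriction to $f_{i\to1}$ or $f_{i\to0}$ respectively, and the factor $1-p\ge\eta$ closing the second contradiction. The only difference is that you spell out the first sub-case in full (using the monotone identities $I_i[f]=\mu_i^+-\mu_i^-$ and $\mu_i^+\ge\mu$), whereas the paper delegates it to the argument of Claim \ref{claim:influential-coordinate}; your bookkeeping of the $\eta$-dependent constants is sound.
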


\begin{proof}
We prove the claim by induction on $n$.

If $n=1$ and $\mu < 1-\frac{C_{3}\epsilon\ln(1/p)}{\ln\left(\frac{1}{\epsilon\ln(1/p)}\right)}$, then since $\mu <1$ we must have either $f \equiv 0$ or $f = 1_{\{x_1 = 1\}}$, so $\mu_{1}^{-}=0$. Hence, Case (2) of Claim \ref{claim:mon-2-cases} occurs for the coordinate 1, verifying the base case.

We now do the inductive step. Let $n \geq 2$, and assume the claim holds when $n$ is replaced by $n-1$. Let $f$ be as in the statement of the claim; then by Claim \ref{claim:expectation-constraint}, we have $\mu \leq 1-c_4$. Suppose for a contradiction that $f$ has Case (1) of Claim \ref{claim:mon-2-cases} occurring for each $i \in [n]$. First, suppose that $\epsilon_{i}^{-} \geq \epsilon$ for each $i \in [n]$. Fix any $i \in [n]$. Then almost exactly the same argument as in the proof of Claim \ref{claim:influential-coordinate} (using Claim \ref{claim:expectation-constraint} in place of Claim \ref{claim:expectation-constraint-1}) yields a contradiction.

We may therefore assume henceforth that there exists $i \in [n]$ such that $\epsilon_i^- < \epsilon$. Since Case (1) of Claim \ref{claim:mon-2-cases} occurs for the coordinate $i$, we have
$$\mu_i^- \geq (1-C_2 \epsilon \ln (1/p))\mu \geq (1-C_2c_0) \mu.$$
On the other hand, we have
$$ \mu_i^- \leq \mu < 1-\frac{C_{3}\epsilon\ln(1/p)}{\ln\left(\frac{1}{\epsilon\ln(1/p)}\right)} < 1-\frac{C_{3}\epsilon_i^-\ln(1/p)}{\ln\left(\frac{1}{\epsilon_i^-\ln(1/p)}\right)},$$
so $f_{i \to 0}$ satisfies the hypotheses of the claim. Hence, by the induction hypothesis, there exists $j \in [n] \setminus \{i\}$ such that $f_{i \to 0}$ has Case (2) of Claim \ref{claim:mon-2-cases} occurring for the coordinate $j$, so
$$p I_{j}[f_{i\to0}] \geq \left(1-C_{2}\epsilon_i^- \ln(1/p)\right) \mu_{i}^{-}.$$
We now have
\begin{align*} pI_j[f] & \geq p(1-p)I_j[f_{i \to 0}] \geq (1-p) \left(1-C_{2}\epsilon_i^- \ln(1/p)\right) \mu_{i}^{-}\\
& > (1-p) \left(1-C_{2}\epsilon \ln(1/p)\right) (1-C_2c_0)\mu \geq \eta (1-C_2 c_0)^2\mu,
\end{align*}
contradicting the fact that $f$ satisfies Case (1) of Claim \ref{claim:mon-2-cases} for the coordinate $j$, provided $c_0$ is sufficiently small depending on $\eta$. This completes the inductive step, proving the claim.
\end{proof}

\subsection{Bootstrapping}

Our final required ingredient is a `bootstrapping' argument, which
says that if $\min\left\{ \mu_{i}^{-},\mu_{i}^{+}\right\} $ is `somewhat'
small, then it must be `very' small.
\begin{claim}
\label{claim:bootstrapping-1} Let $\zeta\in (0,1/2)$. There exist $C_{5}=C_{5}(\zeta)>0$ and $c_{5} = c_5(\zeta) >0$ such that the following holds. Let $\zeta<p\le\frac{1}{2}$. If $\mu_{i}^{-} \le c_{5}\mu$,
then
\[
\mu_i^- \le\frac{C_{5}\left(\epsilon-\epsilon_{i}^{+}\right)\ln(1/p)}{\ln\left(1/\left((\epsilon-\epsilon_{i}^{+})\ln(1/p)\right)\right)}\mu,
\]
and if
 $\mu_{i}^{+} \le c_{5}\mu$, then
 \[
\mu_i^+\le\frac{C_{5}\left(\epsilon-\epsilon_{i}^{-}\right)\ln(1/p)}{\ln\left(1/\left((\epsilon-\epsilon_{i}^{-})\ln(1/p)\right)\right)}\mu.
\]
 \end{claim}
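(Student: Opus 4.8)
The plan is to combine the exact identities (\ref{eq:rearranged-1})--(\ref{eq:rearrenged-2}) with a sharpening of the lower bounds in Lemma~\ref{lemma:analysis-basic-functions} that retains the term $(1-p)\mu_i^-\log_p(\mu_i^-)$ (resp.\ $p\mu_i^+\log_p(\mu_i^+)$) instead of discarding it; this produces an inequality of the shape $t\log_p(1/t)\lesssim(\epsilon-\epsilon_i^{\pm})\ln(1/p)$ with $t=\mu_i^{\mp}/\mu$, which is then inverted.

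\emph{Reduction.} Suppose $\mu_i^-\le c_5\mu$ (the second assertion is handled symmetrically). Since $p\le\tfrac12$ and $p\mu_i^+=\mu-(1-p)\mu_i^-\ge(1-c_5)\mu$, we get $\mu_i^+\ge2(1-c_5)\mu>c_5\mu\ge\mu_i^-$ as soon as $c_5<\tfrac23$; so $\mu_i^-<\mu_i^+$ and (\ref{eq:rearranged-1}) applies. Both summands on the right of (\ref{eq:rearranged-1}) are non-negative --- $F(\mu_i^+,\mu_i^-)-G(\mu_i^+,\mu_i^-)\ge0$ by Lemma~\ref{Lem:ind-step}(1), and $p(I_i[f]-(\mu_i^+-\mu_i^-))\ge0$ since $I_i[f]\ge\mu_i^+-\mu_i^-$ (used already in the proof of Theorem~\ref{thm:edge-iso-biased}) --- hence
$$F(\mu_i^+,\mu_i^-)-G(\mu_i^+,\mu_i^-)\ \le\ \epsilon_i'\ =\ p\mu_i^+(\epsilon-\epsilon_i^+)+(1-p)\mu_i^-(\epsilon-\epsilon_i^-)\ \le\ \mu(\epsilon-\epsilon_i^+)+\frac{c_0}{\ln(1/p)}\,\mu_i^-,$$
using $p\mu_i^+\le\mu$, $\epsilon_i^-\ge0$ and $\epsilon\le c_0/\ln(1/p)$ (and noting $\epsilon_i^+\le\epsilon$ here: otherwise the dichotomy ``$\epsilon_i^+\le\epsilon$ or $\epsilon_i^-\le\epsilon$'' gives $\epsilon_i^-\le\epsilon$, so $\epsilon_i'\le\mu_i^-\epsilon$, which together with the lower bound below would force $\mu_i^->c_5\mu$).

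\emph{The key estimate.} This is the real content. The homogeneity identity $F(\mu v,\mu u)-G(\mu v,\mu u)=\mu\bigl(F(v,u)-G(v,u)\bigr)$ --- valid since $F(\mu a,\mu b)=\mu F(a,b)+\mu\log_p\mu\,(pa+(1-p)b)$ and likewise for $G$ --- lets us set $v:=\mu_i^+/\mu$, $u:=\mu_i^-/\mu$ and reduce to the normalized case $pv+(1-p)u=1$, where $G(v,u)=0$ and $pv=1-(1-p)u$. Writing $w:=(1-p)u=(1-p)\mu_i^-/\mu$ and substituting $pv=1-w$, a direct computation gives
$$F(v,u)=(1-w)\log_p(1-w)+w\log_p w-w\log_p(1-p)-\frac{p}{1-p}\,w.$$
Multiplying by $\ln(1/p)$ and using $\ln(1/p)\log_p t=\ln(1/t)$: the first term is $\ge0$, while for $p\le\tfrac12$ one has $\ln\tfrac1{1-p}\le\ln2$ and $\tfrac{p\ln(1/p)}{1-p}\le 2p\ln(1/p)\le 2/e$, so the remaining terms are at least $-w\bigl(\ln\tfrac1{1-p}+\tfrac{p\ln(1/p)}{1-p}\bigr)\ge-2w$. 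Hence $\ln(1/p)F(v,u)\ge w(\ln(1/w)-2)$; since $w\ge\mu_i^-/(2\mu)$ and $\ln(1/w)\ge\ln(\mu/\mu_i^-)\ge\ln(1/c_5)$, choosing $c_5$ a small absolute constant (so $\ln(1/w)-2\ge\tfrac12\ln(1/w)$) yields
$$F(\mu_i^+,\mu_i^-)-G(\mu_i^+,\mu_i^-)=\mu\bigl(F(v,u)-G(v,u)\bigr)\ \ge\ \frac{\mu_i^-\ln(\mu/\mu_i^-)}{4\ln(1/p)}.$$

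\emph{Inversion, and the $\mu_i^+$ case.} Combining the last two displays and absorbing the term $\tfrac{c_0}{\ln(1/p)}\mu_i^-$ (legitimate as $\ln(\mu/\mu_i^-)\ge\ln(1/c_5)$ exceeds $8c_0$ for $c_0$ small), we obtain, with $u=\mu_i^-/\mu\le c_5$ and $\beta:=(\epsilon-\epsilon_i^+)\ln(1/p)\le c_0$, the inequality $u\ln(1/u)\le 8\beta$. Since $t\mapsto t\ln(1/t)$ is increasing on $(0,1/e)$ and $\tfrac{2M}{\ln(1/M)}\ln\!\bigl(\tfrac{\ln(1/M)}{2M}\bigr)\ge M$ for all small $M>0$, this forces $u\le C_5\beta/\ln(1/\beta)$, i.e.\ precisely $\mu_i^-\le \frac{C_5(\epsilon-\epsilon_i^+)\ln(1/p)}{\ln(1/((\epsilon-\epsilon_i^+)\ln(1/p)))}\,\mu$. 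The assertion on $\mu_i^+$ is symmetric: $\mu_i^+\le c_5\mu$ forces $\mu_i^+<\mu_i^-$, one uses (\ref{eq:rearrenged-2}) and Lemma~\ref{Lem:ind-step}(2) to get $H(\mu_i^+,\mu_i^-)-G(\mu_i^+,\mu_i^-)\le\epsilon_i'\le\mu(\epsilon-\epsilon_i^-)+\tfrac{c_0}{\ln(1/p)}\mu_i^+$, and the analogue of the key estimate --- in which the surviving ``small'' term of the normalized expansion of $H$ is $pv\log_p v$ (with $v=\mu_i^+/\mu$) and Claim~\ref{claim on alpha(p)} is used to discard the non-negative term $(1-pv)K(p)/(1-p)$ --- gives $H(\mu_i^+,\mu_i^-)-G(\mu_i^+,\mu_i^-)\ge c\,p\,\mu_i^+\ln(\mu/\mu_i^+)/\ln(1/p)$ for a constant $c>0$. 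Dividing by $p\mu$, using $p>\zeta$, and inverting as above gives the stated bound on $\mu_i^+$; the control of the extra factor $1/p$ at this point is what makes $C_5$ (and the admissible size of $c_5$) depend on $\zeta$. The one genuinely delicate step is this key estimate and its $H$-analogue --- extracting the $t\log_p(1/t)$-type term with usable constants while certifying that the $O(t)$ and $\log_p(1-p)$ corrections are negligible once $t=\mu_i^\mp/\mu$ is below a small constant; the reductions, the absorption of the $c_0$-term, and the inversion of $t\mapsto t\ln(1/t)$ are routine.
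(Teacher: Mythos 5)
Your proposal is correct and follows essentially the same route as the paper: both start from the identity (\ref{eq:rearranged-1}) (resp.\ (\ref{eq:rearrenged-2})), discard the non-negative terms $p(I_i[f]-|\mu_i^+-\mu_i^-|)$ and $\epsilon_i^-\mu_i^-$ (resp.\ $\epsilon_i^+\mu_i^+$), extract a lower bound of the form $\mu_i^{\mp}\ln(\mu/\mu_i^{\mp})/\ln(1/p)$ up to harmless $O(\mu_i^{\mp})$ corrections, and then invert $t\mapsto t\ln(1/t)$. The only difference is presentational: you obtain the key estimate by exploiting the homogeneity $F(\mu v,\mu u)-G(\mu v,\mu u)=\mu(F(v,u)-G(v,u))$ and expanding the normalized expression, whereas the paper manipulates the un-normalized identity directly (via $\log_p(p\mu_i^+)\ge\log_p\mu$, resp.\ $\log_p((1-p)\mu_i^-)\ge \log_p \mu$, and $K(p)\ge 0$), arriving at (\ref{eq:bootstrap-1}) and (\ref{eq:bootstrap-2}), which coincide with your inequalities up to constants.
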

\begin{proof}
Let $c_5 = c_5(\zeta)>0$ to be chosen later. First suppose that $\mu_{i}^{-}\le\mu_{i}^{+}$, and write $\delta : = \mu_i^- / \mu$; then $\delta \leq c_5$. Using (\ref{eq:rearranged-1}), we have
\begin{align}
\label{eq:upper}
\left(1-p\right)\mu_{i}^{-}\log_{p}\left(\mu_{i}^{-}\right)+ & p\mu_{i}^{+}\log_{p}\mu_{i}^{+}-\mu\log_{p}\mu+pI_{i}\left[f\right]\nonumber \\
 & =\left(\epsilon-\epsilon_i^{+}\right)p\mu_{i}^{+}+(\epsilon-\epsilon_{i}^{-})\left(1-p\right)\mu_{i}^{-} \nonumber \\
 & \leq \left(\epsilon-\epsilon_i^{+}\right)\mu +\epsilon\left(1-p\right)\mu_{i}^{-},
\end{align}
the last inequality following from the fact that $p\mu_i^+ = \mu-(1-p)\mu_i^- \leq \mu$.
Observe that
\begin{align}
\label{eq:lower}
\textrm{LHS} & =\left(1-p\right)\mu_{i}^{-}\log_{p}\left(\mu_{i}^{-}\right)+p\mu_{i}^{+}\log_{p}\left(p\mu_{i}^{+}\right)-\mu\log_{p}\mu+pI_{i}\left[f\right]-p\mu_{i}^{+} \nonumber \\
 & \ge\left(1-p\right)\mu_{i}^{-}\log_{p}\left(\mu_{i}^{-}\right)+p\mu_{i}^{+}\log_{p}\left(\mu\right)-\mu\log_{p}\left(\mu\right)-p\mu_{i}^{-} \nonumber \\
 & =\left(1-p\right)\mu_{i}^{-}\log_{p}\left(\frac{\mu_{i}^{-}}{\mu}\right)-p\mu_{i}^{-}.
\end{align}
Combining (\ref{eq:upper}) and (\ref{eq:lower}) and rearranging, we obtain
\begin{equation}
\left(\frac{\mu_{i}^{-}}{\mu}\right)\left(\log_{p}\left(\frac{\mu_{i}^{-}}{\mu}\right)-\frac{p}{1-p} -\epsilon\right)\leq\frac{\epsilon-\epsilon_{i}^{+}}{1-p} \leq 2(\epsilon-\epsilon_{i}^+).\label{eq:bootstrap-1}
\end{equation}
It follows that
$$\delta \left(\ln (1/\delta)-\frac{p\ln(1/p)}{1-p} - \epsilon \ln(1/p)\right)\leq 2(\epsilon-\epsilon_{i}^+) \ln(1/p),$$
and therefore
$$\delta \left(\ln (1/\delta)-\frac{2}{e} - c_0\right)\leq 2(\epsilon-\epsilon_{i}^+) \ln(1/p),$$
using the fact that $p\ln(1/p) / (1-p) \leq 2/e$ whenever $0 \leq p \leq 1/2$.
Since $\delta \leq c_5$, if $c_{5}$ is sufficiently small this clearly implies that
$$\delta = O_{\zeta}\left(\frac{(\epsilon-\epsilon_i^+)\ln(1/p)}{\ln\left(\frac{1}{(\epsilon-\epsilon_i^+)\ln(1/p)}\right)}\right),$$
as required.

Now suppose that $\mu_{i}^{+}\le\mu_{i}^{-}$, and write $\delta : = \mu_i^+ / \mu$; then $\delta \leq c_5$. Using (\ref{eq:rearranged-1}), we have
\begin{align}
\label{eq:upper2}
\left(1-p\right)\mu_{i}^{-}\log_{p}\left(\mu_{i}^{-}\right)+ & p\mu_{i}^{+}\log_{p}\mu_{i}^{+}-\mu\log_{p}\mu+pI_{i}\left[f\right]\nonumber \\
 & =\left(\epsilon-\epsilon_i^{+}\right)p\mu_{i}^{+}+(\epsilon-\epsilon_{i}^{-})\left(1-p\right)\mu_{i}^{-} \nonumber \\
 & \leq \epsilon p\mu_i^+ +(\epsilon-\epsilon_i^-)\mu.
\end{align}
Observe that
\begin{align}
\label{eq:lower2}
\textrm{LHS} & =\left(1-p\right)\mu_{i}^{-}\log_{p}\left((1-p)\mu_{i}^{-}\right)+p\mu_{i}^{+}\log_{p}\left(\mu_{i}^{+}\right)-\mu\log_{p}(\mu)+pI_{i}\left[f\right] \nonumber \\
& -(1-p) \mu_{i}^{-} \log_p(1-p) \nonumber \\
 & \geq \left(1-p\right)\mu_{i}^{-}\log_{p} (\mu) +p\mu_{i}^{+}\log_{p}\left(\mu_{i}^{+}\right)-\mu\log_{p}(\mu)+p(\mu_i^- - \mu_i^+) \nonumber\\
& -(1-p) \mu_{i}^{-} \log_p(1-p)\nonumber\\
& = p\mu_i^+ \log_p\left(\frac{\mu_{i}^{+}}{\mu}\right)+ \mu_i^-(p - (1-p)\log_p(1-p)) - p\mu_i^+ \nonumber\\
& = p\mu_i^+ \log_p\left(\frac{\mu_{i}^{+}}{\mu}\right)+ K(p) \mu_i^- - p\mu_i^+ \nonumber\\
& \geq p\mu_i^+ \log_p\left(\frac{\mu_{i}^{+}}{\mu}\right) - p\mu_i^+.
\end{align}
Combining (\ref{eq:upper2}) and (\ref{eq:lower2}) and rearranging, we obtain
\begin{equation}
\left(\frac{\mu_{i}^{+}}{\mu}\right)\left(\log_{p}\left(\frac{\mu_{i}^{+}}{\mu}\right)-1-\epsilon\right)\leq\frac{\epsilon-\epsilon_{i}^{-}}{p} \leq \tfrac{1}{\zeta}(\epsilon-\epsilon_{i}^-).\label{eq:bootstrap-2}
\end{equation}
It follows that
$$\delta \left(\ln (1/\delta)-\ln(1/p) - \epsilon \ln(1/p)\right)\leq \tfrac{1}{\zeta} (\epsilon-\epsilon_{i}^-) \ln(1/p),$$
and therefore
$$\delta \left(\ln (1/\delta)-\ln(1/\zeta) - c_0\right)\leq \tfrac{1}{\zeta}(\epsilon-\epsilon_{i}^-) \ln(1/p).$$
Since $\delta \leq c_5$, if $c_{5}$ is sufficiently small (depending on $\zeta$), this clearly implies that
$$\delta = O_{\zeta}\left(\frac{(\epsilon-\epsilon_i^-)\ln(1/p)}{\ln\left(\frac{1}{(\epsilon-\epsilon_i^-)\ln(1/p)}\right)}\right),$$
as required.
\end{proof}
We now prove a bootstrapping claim suitable for use in the cases where $p \leq \zeta$ and $f$ is arbitrary, or where $p \leq 1-\eta$ and $f$ is monotone increasing.
\begin{claim}
\label{claim:bootstrapping-mon} Let $\eta >0$. There exist $C_{5}=C_{5}(\eta)>0$ and $c_{5} = c_5(\eta)>0$ such that if $p \leq 1-\eta$ and $\mu_{i}^{-}\le c_{5}\mu$, then
\[
\mu_{i}^{-}\le\frac{C_{5}\left(\epsilon-\epsilon_{i}^{+}\right)\ln(1/p)}{\ln\left(1/\left(\left(\epsilon-\epsilon_{i}^{+}\right)\ln(1/p)\right)\right)}\mu.
\]
 \end{claim}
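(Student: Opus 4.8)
The plan is to run the argument for the first part of Claim~\ref{claim:bootstrapping-1} essentially verbatim, isolating the two places where the hypothesis $p\le\tfrac12$ was used and replacing the relevant estimates by ones valid for all $p\le1-\eta$.

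First I would observe that the hypothesis $\mu_{i}^{-}\le c_{5}\mu$ already forces $\mu_{i}^{-}\le\mu_{i}^{+}$, provided $c_{5}=c_{5}(\eta)$ is small enough: by (\ref{eq:basic1}) we have $p\mu_{i}^{+}=\mu-(1-p)\mu_{i}^{-}\ge(1-c_{5})\mu$, so $\mu_{i}^{+}\ge(1-c_{5})\mu/p\ge(1-c_{5})\mu/(1-\eta)>c_{5}\mu\ge\mu_{i}^{-}$ as soon as $c_{5}<1/(2-\eta)$. Hence we land automatically in the first of the two cases handled in the proof of Claim~\ref{claim:bootstrapping-1}, and the ``$\mu_{i}^{+}\le\mu_{i}^{-}$'' branch does not arise.

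Next I would reproduce the chain (\ref{eq:upper})--(\ref{eq:lower}). The upper bound (\ref{eq:upper}) uses only (\ref{eq:rearranged-1}), $\epsilon_{i}^{-}\ge0$ and $p\mu_{i}^{+}\le\mu$; the lower bound (\ref{eq:lower}) uses only that $\log_{p}$ is decreasing, that $\mu=p\mu_{i}^{+}+(1-p)\mu_{i}^{-}$, and that $I_{i}[f]\ge\mu_{i}^{+}-\mu_{i}^{-}$ --- none of which needs $p\le\tfrac12$. Combining them and dividing by $(1-p)\mu$ exactly as in the derivation of (\ref{eq:bootstrap-1}), and then multiplying through by $\ln(1/p)$, gives
\[
\delta\left(\ln(1/\delta)-\frac{p\ln(1/p)}{1-p}-\epsilon'\right)\le\frac{(\epsilon-\epsilon_{i}^{+})\ln(1/p)}{1-p},\qquad\delta:=\mu_{i}^{-}/\mu .
\]
Two elementary facts then finish the proof. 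First, $\frac{p\ln(1/p)}{1-p}\le1$ for all $p\in(0,1)$ --- equivalently $\ln p\ge1-\tfrac1p$, which holds because $p\mapsto\tfrac1p-1+\ln p$ is decreasing on $(0,1)$ and vanishes at $p=1$ --- so, using also $\epsilon'\le c_{0}$ on the left and $1-p\ge\eta$ on the right, the displayed inequality becomes $\delta\left(\ln(1/\delta)-1-c_{0}\right)\le\tfrac{1}{\eta}(\epsilon-\epsilon_{i}^{+})\ln(1/p)$. Second, the same ``inversion'' estimate used in Claims~\ref{claim:expectation-constraint-1} and~\ref{claim:bootstrapping-1} --- if $x(\ln(1/x)-A)\le y$ with $A=O(1)$ and $x$ below a threshold depending on $A$, then $x=O_{A}(y/\ln(1/y))$ --- applied with $x=\delta\le c_{5}$ and $y=\tfrac1\eta(\epsilon-\epsilon_{i}^{+})\ln(1/p)$, yields
\[
\delta=O_{\eta}\left(\frac{(\epsilon-\epsilon_{i}^{+})\ln(1/p)}{\ln\left(1/\left((\epsilon-\epsilon_{i}^{+})\ln(1/p)\right)\right)}\right),
\]
which is the claimed bound after multiplying through by $\mu$; one just takes $c_{5}=c_{5}(\eta)$ below the threshold coming from this inversion.

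I do not expect a genuine obstacle here. The only subtlety is that $p$ is no longer bounded away from $0$, so the quantity $\frac{p\ln(1/p)}{1-p}$, which was trivially $\le2/e$ when $p\le\tfrac12$, must now be controlled by the (elementary but slightly less obvious) bound $\le1$ valid on all of $(0,1)$; and the factor $\frac1{1-p}\le\frac1\eta$ appearing on the right-hand side is exactly what makes the constants $C_{5}$ and $c_{5}$ depend on $\eta$. Everything else is word-for-word as in Claim~\ref{claim:bootstrapping-1}.
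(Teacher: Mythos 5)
Your proposal is correct and follows essentially the same route as the paper, which likewise derives the inequality $\delta\bigl(\log_{p}(\delta)-\tfrac{p}{1-p}-\epsilon\bigr)\leq\tfrac{\epsilon-\epsilon_{i}^{+}}{1-p}\leq\tfrac{\epsilon-\epsilon_{i}^{+}}{\eta}$ exactly as in Claim \ref{claim:bootstrapping-1} and then inverts. The only (cosmetic) differences are that you make explicit the observation that $\mu_i^-\le c_5\mu$ forces $\mu_i^-\le\mu_i^+$, and that you bound $\tfrac{p\ln(1/p)}{1-p}$ by $1$ rather than by $\tfrac{1}{e\eta}$ as the paper does; both bounds are valid.
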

\begin{proof}
As in the proof of Claim \ref{claim:bootstrapping-1}, we have
\begin{equation}
\left(\frac{\mu_{i}^{-}}{\mu}\right)\left(\log_{p}\left(\frac{\mu_{i}^{-}}{\mu}\right)-\frac{p}{1-p} - \epsilon\right)\leq\frac{\epsilon-\epsilon_{i}^{+}}{1-p}\leq\frac{\epsilon-\epsilon_{i}^{+}}{\eta}.\label{eq:bootstrap}
\end{equation}
 Writing $\delta:=\frac{\mu_{i}^{-}}{\mu}\le c_{5}$, we obtain
\[
\delta\left(\ln(1/\delta)-\frac{1}{e\eta} - c_0\right) \leq \delta\left(\ln(1/\delta)-\frac{p\ln(1/p)}{1-p} -\epsilon\ln(1/p)\right) \le\ln(1/p)O_{\eta}\left(\epsilon-\epsilon_{i}^{+}\right),
\]
using the fact that $p\ln(1/p) /(1-p) \leq 1/(e\eta)$ whenever $0 \leq p \leq 1-\eta$. Provided $c_{5} = c_5(\eta)>0$ is sufficiently small, this implies that
\[
\delta = O_{\eta}\left(\frac{\left(\epsilon-\epsilon_{i}^{+}\right)\ln(1/p)}{\ln\left(1/\left(\left(\epsilon-\epsilon_{i}^{+}\right)\ln(1/p)\right)\right)}\right),
\]
 as required.
\end{proof}

\subsection{Inductive proofs of Theorems \ref{thm:skewed-iso-stability} and \ref{thm:mon-iso-stability}}
\label{subsec:ind}
\begin{proof}[Proof of Theorem \ref{thm:skewed-iso-stability}.]
First, we choose any $\zeta_0 \in (0,c_4/2)$ (where $c_4$ is the absolute constant from Claim \ref{claim:expectation-constraint-1}), and we deal with the case of $p < \zeta_0$, using Claim \ref{claim:influential-coordinate}. In this case, we prove that the conclusion of Theorem \ref{thm:skewed-iso-stability} holds with $S$ a monotone increasing subcube.

We proceed by induction on $n$. If $n=1$, then $f$ is the indicator function of a monotone increasing subcube unless $f = 1_{\{x_1=0\}}$, so we may assume that $f = 1_{\{x_1=0\}}$. Then $\mu_p(f) = 1-p > 1-\zeta_0 > 1-c_4$, so by Claim \ref{claim:expectation-constraint-1}, we have
\[
\mu_{p}(f)\geq1-\frac{C_{3}\epsilon\ln(1/p)}{\ln\left(\frac{1}{\epsilon\ln(1/p)}\right)},
\]
 so the conclusion of the theorem holds with $S=\{0,1\}$.

We now do the inductive step. Let $n\geq2$, and assume that Theorem \ref{thm:skewed-iso-stability}
holds when $n$ is replaced by $n-1$. Let $f:\{0,1\}^n \to \{0,1\}$ satisfy the hypotheses of Theorem \ref{thm:skewed-iso-stability}. We may assume throughout that $\mu_{p}(f)\leq 1-c_4$, otherwise by Claim \ref{claim:expectation-constraint-1}, we have
\[
\mu_{p}(f)\geq1-\frac{C_{3}\epsilon\ln(1/p)}{\ln\left(\frac{1}{\epsilon\ln(1/p)}\right)},
\]
 so the conclusion of the theorem holds with $S=\{0,1\}^{n}$. Since $\mu_p(f) \leq 1-c_4$, by Claim \ref{claim:influential-coordinate}, there exists $i\in[n]$ such that $\mu_{i}^{-}\leq C_{2} \epsilon\mu\ln(1/p)$, so if $c_{0}$
is a sufficiently small absolute constant, we have $\mu_{i}^{-}\leq c_{5} \mu$, where $c_5$ is the absolute constant we obtain by applying Claim \ref{claim:bootstrapping-mon} with $\eta = 1-\zeta_0$. Hence, $\mu_{i}^{-}$ satisfies the hypothesis of Claim \ref{claim:bootstrapping-mon}. Therefore, we have
\begin{equation}\label{eq:stronger-upper}
\mu_i^- \le\frac{C_{5}\left(\epsilon-\epsilon_{i}^{+}\right)\ln(1/p)}{\ln\left(1/\left((\epsilon-\epsilon_{i}^{+})\ln(1/p)\right)\right)}\mu,
\end{equation}
where $C_5$ is the absolute constant we obtain by applying Claim \ref{claim:bootstrapping-mon} with $\eta = 1-\zeta_0$.
In particular, we have $\epsilon_i^+ \leq \epsilon$. By applying the induction
hypothesis to $f_{i\to1}$, we obtain
\[
\mu_{p}\left(f_{i\to1}\Delta1_{S_{T}}\right)\leq\frac{C_{1}\epsilon_{i}^{+}\ln(1/p)\mu_{i}^{+}}{\ln\left(\frac{1}{\epsilon_{i}^{+}\ln(1/p)}\right)}
\]
for some monotone increasing subcube $S_{T}=\{x\in\{0,1\}^{[n]\setminus\{i\}}:\ x_{j}=1\ \forall j\in T\}$, where $T \subset [n]$. Therefore, writing
\[
S_{T\cup\{i\}}:=\{x\in\{0,1\}^{n}:\ x_{j}=1\ \forall j\in T\cup\{i\}\},
\]
 we have
\begin{align*}
\mu_{p}\left(f\Delta1_{S_{T\cup\left\{ i\right\} }}\right) & \leq\left(1-p\right)\mu_{i}^{-}+p\mu_{p}\left(f_{i\to1}\Delta1_{S_{T}}\right)\\
 & \leq\left(1-p\right)\left(\frac{C_{5}(\epsilon-\epsilon_{i}^{+})\ln(1/p)\mu}{\ln\left(\frac{1}{\left[\epsilon-\epsilon_{i}^{+}\right]\ln(1/p)}\right)}\right)+\frac{C_{1}\epsilon_{i}^{+}\ln(1/p)p\mu_{i}^{+}}{\ln\left(\frac{1}{\epsilon_{i}^{+}\ln(1/p)}\right)}\\
 & \leq\frac{\left(C_{5}\left(\epsilon-\epsilon_{i}^{+}\right)+C_{1}\epsilon_{i}^{+}\right)\mu\ln(1/p)}{\ln\left(\frac{1}{\epsilon\ln(1/p)}\right)}\leq\frac{C_{1}\epsilon\mu\ln(1/p)}{\ln\left(\frac{1}{\epsilon\ln(1/p)}\right)},
\end{align*}
 provided $C_{1}\geq C_5$, using (\ref{eq:stronger-upper}). Hence, the conclusion of the theorem holds with $S=S_{T\cup\{i\}}$. This completes the inductive step, proving the theorem in the case $p < \zeta_0$.

Now we prove the theorem in the case $\zeta_0 \leq p \leq 1/2$.

We proceed again by induction on $n$. If $n=1$, then as before the theorem holds trivially. Let $n\geq2$, and assume Theorem \ref{thm:skewed-iso-stability} holds
when $n$ is replaced by $n-1$. Let $f:\{0,1\}^n \to \{0,1\}$ satisfy the hypotheses of Theorem \ref{thm:skewed-iso-stability}. As before, we may assume throughout that $\mu_{p}(f)\leq 1-c_4$, otherwise by Claim \ref{claim:expectation-constraint-1}, we have
\[
\mu_{p}(f)\geq1-\frac{C_{3}\epsilon\ln(1/p)}{\ln\left(\frac{1}{\epsilon\ln(1/p)}\right)},
\]
 so the conclusion of the theorem holds with $S=\{0,1\}^{n}$. Since $\mu_p(f) \leq 1-c_4$, by Claim \ref{claim:influential-coordinate-2} (applied with $\zeta=\zeta_0$), provided $c_{0}$ is sufficiently small depending on $\zeta_0$, there exists $i\in[n]$ such that $\min\{\mu_{i}^{-},\mu_i^+\}\leq C_{2}(\zeta_0) \epsilon_i' \leq C_2(\zeta_0) \epsilon\mu$, so we have
$$\min\{\mu_{i}^{-},\mu_i^+\}\leq C_2(\zeta_0) \epsilon \mu \leq C_2(\zeta_0) c_0 /\ln(1/p) \leq C_2(\eta_0) c_0 /\ln(2) \leq c_{5}(\zeta_0)\mu,$$
provided $c_0 \leq (c_5(\zeta_0) \ln 2) / C_2(\zeta_0)$. Hence, either $\mu_{i}^{-}$ or $\mu_i^+$ satisfies the hypothesis of Claim \ref{claim:bootstrapping-1} (with $\zeta=\zeta_0$). Suppose that $\mu_{i}^{-} \leq c_5(\zeta_0) \mu$ (the other case is very similar). Then, by Claim \ref{claim:bootstrapping-1}, we have
\begin{equation}\label{eq:C5bound} \mu_i^- \leq \frac{C_{5}(\zeta_0) (\epsilon-\epsilon_{i}^{+})\mu \ln(1/p)}{\ln\left(\frac{1}{(\epsilon-\epsilon_{i}^{+})\ln(1/p)}\right)},\end{equation}
and so in particular, $\epsilon_i^+ \leq \epsilon$. By applying the induction
hypothesis to $f_{i\to1}$, we obtain
\[
\mu_{p}\left(f_{i\to1}\Delta1_{S'}\right)\leq\frac{C_{1}\epsilon_{i}^{+}\ln(1/p)\mu_{i}^{+}}{\ln\left(\frac{1}{\epsilon_{i}^{+}\ln(1/p)}\right)}
\]
for some subcube $S'=\{x\in\{0,1\}^{[n]\setminus\{i\}}:\ x_{j}=a_j\ \forall j\in T\}$, where $T \subset [n]$ and $a_j \in \{0,1\}$ for each $j \in T$.
Therefore, writing
\[
S:=\{x\in\{0,1\}^{n}:\ x_{j}=a_j\ \forall j\in T,\ x_i = 1\},
\]
 we have
 \begin{align*}
\mu_{p}\left(f\Delta1_{S}\right) & \leq\left(1-p\right)\mu_{i}^{-}+p\mu_{p}\left(f_{i\to1}\Delta1_{S'}\right)\\
 & \leq\left(1-p\right)\left(\frac{C_{5}(\zeta_0)(\epsilon-\epsilon_{i}^{+})\mu \ln(1/p)}{\ln\left(\frac{1}{(\epsilon-\epsilon_{i}^{+})\ln(1/p)}\right)}\right)+\frac{C_{1}\epsilon_{i}^{+}\ln(1/p)p\mu_{i}^{+}}{\ln\left(\frac{1}{\epsilon_{i}^{+}\ln(1/p)}\right)}\\
 & \leq\frac{\left(C_{5}(\zeta_0)\left(\epsilon-\epsilon_{i}^{+}\right)+C_{1}\epsilon_{i}^{+}\right)\mu\ln(1/p)}{\ln\left(\frac{1}{\epsilon\ln(1/p)}\right)}\leq\frac{C_{1}\epsilon\mu\ln(1/p)}{\ln\left(\frac{1}{\epsilon\ln(1/p)}\right)},
\end{align*}
 provided $C_{1}\ge C_{5}(\zeta_0)$, using (\ref{eq:C5bound}). This completes the inductive step, proving the theorem in the case $\zeta_0 \leq p \leq 1/2$, and completing the proof of Theorem \ref{thm:skewed-iso-stability}.

 The inductive proof of Theorem \ref{thm:mon-iso-stability} is very similar indeed, except that the constants are allowed to depend upon $\eta$ (where $\eta$ is as in the statement of Theorem \ref{thm:mon-iso-stability}); we omit the details.
\end{proof}

\section{Sharpness of Theorems \ref{thm:skewed-iso-stability} and \ref{thm:mon-iso-stability}}
\label{sec:examples}

Theorem \ref{thm:skewed-iso-stability} is best possible up to the
values of the absolute constants $c_{0}$ and $C_{1}$. This can be
seen by taking $f=1_{A}$, where
\begin{align*}
A= & \{x\in\{0,1\}^{n}:\ x_{i}=1\ \forall i\in[t]\}\\
 & \cup\{x\in\{0,1\}^{n}:\ x_{i}=1\ \forall i\in[t+s]\setminus\{t\},\ x_{t}=0\}\\
 & \setminus\{x\in\{0,1\}^{n}:\ x_{i}=1\ \forall i\in[t+s]\setminus\{t+1\},\ x_{t+1}=0\},
\end{align*}
 for $s,t\in\mathbb{N}$ with $s\geq3$. Let $0 <p \leq 1/2$. We have $\mu_{p}(A)=p^{t}$,
and
\[
I_{i}[A]=\begin{cases}
p^{t-1} & \textrm{ if }1\leq i\leq t-1;\\
(1-p^{s-1})p^{t-1} & \textrm{ if }i=t;\\
p^{t+s-2} & \textrm{ if }i=t+1;\\
2(1-p)p^{t+s-2} & \textrm{ if }t+2\leq i\leq t+s;\\
0 & \textrm{ if }i>t+s.
\end{cases}
\]
 Hence,
\begin{equation}
\label{eq:influence-equality}
I^{p}[A]=p^{t-1}\left(t+2(s-1)(1-p)p^{s-1}\right).
\end{equation}
 On the other hand, it is easy to see that
\begin{equation}
\label{eq:subcube-bound}
\frac{\mu_{p}(A\Delta S)}{\mu_{p}(A)}=\frac{\mu_{p}(A\Delta S)}{p^{t}}\geq 2(1-p)p^{s-1}
\end{equation}
 for all subcubes $S$, with equality if and only if $S = \{x \in \{0,1\}^n:\ x_i=1\ \forall i \in [t]\}:=C$. Indeed, note that $\mu_p(C \setminus A) = \mu_p(A \setminus C) = (1-p)p^{t+s-1}$. Suppose that $S = \{x \in \{0,1\}^n:\ x_i = a_i\ \forall i \in F\}$, where $F \subset [n]$ and $a_i \in \{0,1\}$ for all $i \in F$. If there exists $i \in F \cap [t]$ such that $a_i=0$, then $S \cap C = \emptyset$ and therefore
 $$\mu_p(A \Delta S) \geq \mu_p(A \setminus S) \geq \mu_p(A \cap C) = p^t - (1-p)p^{t+s-1} > 2(1-p)p^{t+s-1},$$
the last inequality using the fact that $s \geq 3$ and $p(1-p) \leq 1/4$. If $[t] \setminus F \neq \emptyset$, say $j \in [t] \setminus F$, then for any $x \in S \cap C$, we have $x-e_j \in S \setminus C$, and therefore $\mu_p(S \setminus C) \geq \tfrac{1-p}{p}\mu_p(S \cap C) \geq \mu_p(S \cap C)$. Hence,
\begin{equation}\label{eq:sa} \mu_p(S \setminus A) \geq \mu_p(S \setminus C)-\mu_p(A \setminus C) \geq \mu_p(S \cap C)-(1-p)p^{t+s-1}.\end{equation}
On the other hand, we have
\begin{equation} \label{eq:as} \mu_p(A \setminus S) \geq \mu_p(A \cap C) - \mu_p(S \cap C) = p^t - (1-p)p^{t+s-1} - \mu_p(S \cap C).\end{equation}
Summing the inequalities (\ref{eq:sa}) and (\ref{eq:as}), we obtain
$$\mu_p(A \Delta S) \geq p^t -2(1-p)p^{t+s-1} > 2(1-p)p^{t+s-1},$$
the last inequality using the fact that $s \geq 3$ and $p(1-p) \leq 1/4$. Hence, we may assume that $[t] \subset F$ and that $a_i = 1$ for all $i \in [t]$, so in particular $S \subset C$. Suppose that $F \setminus [t] \neq \emptyset$. Then $\mu_p(S) \leq (1-p)\mu_p(C) = (1-p)p^t$, and therefore
 \begin{align*} \mu_p(A \setminus S) & \geq \mu_p((A \cap C) \setminus S) + \mu_p(A \setminus C)\\
 & \geq \mu_p(A \cap C) -  \mu_p(S) + \mu_p(A \setminus C)\\
 & = p^t - (1-p)p^{t+s-1} - \mu_p(S) + (1-p)p^{t+s-1}\\
 & = p^t - \mu_p(S)\\
 & \geq p^{t}-(1-p)p^t\\
 & = p^{t+1}\\
 & > 2(1-p)p^{t+s-1},
 \end{align*}
the last inequality using the fact that $s \geq 3$ and $p(1-p) \leq 1/4$. The only remaining case is $S=C$, where equality holds in (\ref{eq:subcube-bound}).

It follows from (\ref{eq:influence-equality}) and (\ref{eq:subcube-bound}) that if $\epsilon:= 2(s-1)(1-p)p^{s-1}$,
then
\[
pI^{p}[A]=\mu_{p}(A)(\log_{p}(\mu_{p}(A))+\epsilon),
\]
 but
\[
\frac{\mu_{p}(A\Delta S)}{\mu_{p}(A)}\geq\frac{\epsilon}{s-1}:=\delta,
\]
 for all subcubes $S$. We have $(s-1)p^{s-1}\geq\tfrac{\epsilon}{2}$,
so writing $s-1=x/\ln(1/p)$, we get
\[
xe^{-x}\geq\tfrac{1}{2}\epsilon\ln(1/p),
\]
 which implies
\[
x\leq2\ln\left(\frac{1}{\tfrac{1}{2}\epsilon\ln(1/p)}\right),
\]
 or equivalently,
\[
s-1\leq\frac{2}{\ln(1/p)}\ln\left(\frac{1}{\tfrac{1}{2}\epsilon\ln(1/p)}\right).
\]
 Hence,
\[
\delta\geq\frac{\epsilon\ln(1/p)}{2\ln\left(\frac{1}{\frac{1}{2}\epsilon\ln(1/p)}\right)},
\]
 showing that Theorem \ref{thm:skewed-iso-stability} is best possible
up to the value of $C_{1}$. Moreover, we clearly require $\epsilon\ln(1/p)<1$
for the right-hand side of (\ref{eq:conc}) to
be non-negative, so in the statement of Theorem \ref{thm:skewed-iso-stability}, it is necessary that $c_0 < 1$.

Observe that the above family $A$ is not monotone increasing. To prove sharpness for Theorem \ref{thm:mon-iso-stability}, we may take $f = 1_{B}$, where
$$B=\{x\in\{0,1\}^{n}:\ x_{i}=1\ \forall i\in[t]\} \cup\{x\in\{0,1\}^{n}:\ x_{i}=1\ \forall i\in[t+s]\setminus\{t\},\ x_{t}=0\}.$$
 for $s,t\in\mathbb{N}$ with $s\geq3$. Let $0 < p < 1$. We have $\mu_{p}(B)=p^{t}(1+  (1-p)p^{s-1})$,
and
\[
I^p_{i}[B]=\begin{cases}
p^{t-1} + (1-p) p^{t+s-2}& \textrm{ if }1\leq i\leq t-1;\\
(1-p^{s})p^{t-1} & \textrm{ if }i=t;\\
(1-p)p^{t+s-2} & \textrm{ if }t+1\leq i\leq t+s;\\
0 & \textrm{ if }i>t+s.
\end{cases}
\]
Hence,
$$I^p[B] = p^{t-1}(t+((t+s)(1-p)-1)p^{s-1}),$$
and we have
$$\frac{p I^p[B] - \mu_p(B) \log_p(\mu_p(B))}{\mu_p(B)} \leq (s-1)(1-p)p^{s-1} = :\epsilon.$$
On the other hand, we have
\[
\frac{\mu_{p}(B\Delta S)}{\mu_{p}(B)}=\frac{\mu_{p}(B\Delta S)}{p^{t}(1+  (1-p)p^{s-1})}\geq \frac{(1-p)p^{t+s-1}}{p^{t}(1+  (1-p)p^{s-1})} \geq \tfrac{1}{2}(1-p)p^{s-1}: = \delta
\]
 for all subcubes $S$, with equality if and only if $S = \{x\in \{0,1\}^n:\ x_i=1\ \forall i \in [t]\}$, by a very similar argument to that above (for $A$). Similarly to before, we obtain
\[
\delta\geq\frac{\epsilon\ln(1/p)}{4\ln\left(\frac{1-p}{\epsilon\ln(1/p)}\right)}.
\]
Provided $1/e < p < 1$, choosing $s = \lceil 1/\ln(1/p) \rceil +1$ yields 
$$\delta\geq\frac{\epsilon\ln(1/p)}{4\ln\left(\frac{1-p}{\epsilon\ln(1/p)}\right)} = \Omega(\ln(1/(1-p))) \frac{\epsilon\ln(1/p)}{\ln\left(\frac{1}{\epsilon\ln(1/p)}\right)};$$
in this case, writing $p = 1-\eta$, we have $\epsilon = \Theta(1-p) = O(\eta) = O(\eta^2) /\ln(1/p)$. This shows that Theorem \ref{thm:mon-iso-stability} is best possible up to a constant factor depending on $\eta$, and that the statement of Theorem \ref{thm:mon-iso-stability} holds only if $c_0(\eta) = O(\eta^2)$ or $C_1(\eta) = \Omega(\ln(1/\eta))$, so the dependence on $\eta$ cannot be removed.

We note that $B$ also demonstrates the sharpness of Theorem \ref{thm:skewed-iso-stability}, but does not have the nice property of $\log_p(\mu_p(B)) \in \mathbb{N}$, so we think it worthwhile to include both examples.

\section{Isoperimetry via Kruskal-Katona -- Proof of Theorem~\ref{thm:Monotone}, and a new proof of the `full' edge isoperimetric inequality}
\label{sec:KK}

In this section, we use the Kruskal-Katona theorem, the Margulis-Russo lemma and some analytic and combinatorial arguments to prove Theorem~\ref{thm:Monotone}, our biased version of the `full' edge isoperimetric inequality, for monotone increasing sets. We then give the (very short) deduction of Theorem \ref{thm:edge-iso} (the `full' edge isoperimetric inequality) from the $p=1/2$ case of Theorem \ref{thm:Monotone}, hence providing a new proof of the former --- one that relies upon the Kruskal-Katona theorem.

\subsection*{The Margulios-Russo Lemma}
We first recall the useful lemma of Margulis \cite{Margulis} and Russo \cite{Russo}.
\begin{lem}[Margulis, Russo]
\label{lem:MR}Let $\f\subset\pn$ be a monotone increasing family and let $0 < p_0 < 1$. Then
$$\left.\frac{d}{dp} \mu_p(\f)\right|_{p=p_0} = I^{p_0}[\f].$$.
\end{lem}

\subsection*{Lexicographic families in the Cantor space $\p(\mathbb{N})$}

We now give a formal definition of the lexicographic families $\l_{\lambda}$ (described less formally in the Introduction), and analyse some of their properties.

 We define $\l_{0} = \emptyset$ and $\l_{1} = \p(\mathbb{N})$. For any $\lambda\in\left(0,1\right)$, let the binary expansion of $\lambda$ be
\begin{equation}\label{eq:bin-exp-inf} \sum_{j=1}^{\infty}2^{-i_j} = \lambda \end{equation}
where $1 \leq i_1 < i_2 < \ldots$ (if the binary expansion is infinite),
or
\begin{equation} \label{eq:bin-exp-fin} \sum_{j=1}^{N}2^{-i_j} = \lambda \end{equation}
where $1 \leq i_1 < i_2 < \ldots < i_{N}$ (if the binary expansion is finite), and define
$$\l_{\lambda} = \bigcup_{j} \{S \subset \mathbb{N}:\ S \cap [i_j] = [i_j] \setminus \{i_k:\ k < j\}\} \subset \mathcal{P}(\mathbb{N}).$$
Equivalently, let $T = \{i_1,i_2,\ldots\}$ be the set whose characteristic vector corresponds to the binary expansion of $\lambda$, and let $\l_{\lambda} = \{S \subset [n]:\ S \geq \mathbb{N} \setminus T\}$ be the initial segment of the lexicographic ordering on $\p(\mathbb{N})$ ending at $\mathbb{N} \setminus T$.

Note that if the binary expansion of $\lambda$ is finite, i.e.\ $2^n \lambda \in \mathbb{N} \cup \{0\}$ for some $n \in \mathbb{N}$, then $\l_{\lambda} = \l \times \mathcal{P}(\mathbb{N} \setminus [n])$, where $\l \subset \p([n])$ is the lexicographic family of size $2^n \lambda$.

We identify $\mathcal{P}(\mathbb{N})$ with the Cantor space $\{0,1\}^{\mathbb{N}}$, in the natural way. We let $\Sigma$ be the $\sigma$-algebra on $\p(\mathbb{N})$ generated by $\cup_{n \in \mathbb{N}} \pn$. By the countable unions property of $\sigma$-algebras, it is clear that $\l_\lambda \in \Sigma$ for any $\lambda \in [0,1]$.

By the Kolmogorov Extension theorem (see \cite{kol}, or e.g.\ \cite{tao} for a more modern exposition), there exists a unique probability measure $\mu_p^{(\mathbb{N})}$ on $(\{0,1\}^{\mathbb{N}},\Sigma)$ such that
$$\mu_p^{(\mathbb{N})}(A_1 \times A_2 \times \ldots \times A_n \times \{0,1\} \times \{0,1\} \times \ldots) = \mu_p^{(n)}(A_1 \times A_2 \times \ldots \times A_n)$$
for all $n \in \mathbb{N}$ and all $A_1,\ldots,A_n \subset \{0,1\}$. We may call this measure the {\em $p$-biased product measure} on $\{0,1\}^{\mathbb{N}}$.

Abusing notation slightly, we write $\mu_p = \mu_p^{(\mathbb{N})}$ when the underlying space $\{0,1\}^{\mathbb{N}}$ is understood.

If $f: \{0,1\}^{\mathbb{N}} \to \{0,1\}$ is $\Sigma$-measurable, we define influence of the $i$th coordinate on $f$ by
\[I_i^p[f] := \Pr_{x \sim \mu_p}[f(x) \neq f(x \oplus e_i)]\]
and we define the total influence of $f$ by
$$I^p[f] := \sum_{i=1}^{\infty} I_i^p[f].$$

We remark that there exist $\Sigma$-measurable functions $f: \{0,1\}^{\mathbb{N}} \to \{0,1\}$ such that $I^p[f] = \infty$.
However, the families $\l_{\lambda}$ are better behaved, as we will shortly see.

Clearly, by the countable additivity of $\mu_p$, we have
\begin{equation}\label{eq:countable-add} \mu_p(\l_{\lambda}) = \sum_{j}p^{i_j-j+1}(1-p)^{j-1},\end{equation}
where the $(i_j)$ define the binary expansion of $\lambda$, as in (\ref{eq:bin-exp-inf}) or (\ref{eq:bin-exp-fin}).

 It is helpful to analyse the families $\l_{\lambda}$ using the families $(\l_{\lfloor \lambda 2^n \rfloor /2^n})_{n \in \mathbb{N}}$, which depend upon only finitely many coordinates. To this end, for each $\lambda \in [0,1]$ and each $n \in \mathbb{N}$, we define $\l_{\lambda}(n): = \l_{\lfloor \lambda 2^n \rfloor /2^n}$. For brevity, if $p \in (0,1)$ is fixed, we write $r = r(p): = \max\{p,1-p\}$, and if $\lambda \in [0,1]$ is fixed, we write $\l := \l_{\lambda}$ and $\l(n) := \l_{\lambda}(n) = \l_{\lfloor \lambda 2^n \rfloor 2^n}$ for each $n \in \mathbb{N}$. Observe that for any $\lambda \in [0,1]$, we have $\l(n) \subset \l(n+1) \subset \l$ for all $n \in \mathbb{N}$.

 \begin{claim}
\label{claim:measures-converge}
Let $0 < p < 1$ and let $0 \leq \lambda \leq 1$. Then
$$\mu_p(\l \setminus \l(n)) \leq \frac{r^{n+1}}{1-r}.$$
\end{claim}
\begin{proof}
We may assume that $0 < \lambda < 1$. Let the binary expansion of $\lambda$ be
$$\lambda = \sum_{j} 2^{-i_j},$$
where $1 \leq i_1 < i_2 < \ldots$, so that by definition,
$$\l = \l_{\lambda} = \bigcup_{j} \{S \subset \mathbb{N}:\ S \cap [i_j] = [i_j] \setminus \{i_k:\ k < j\}\} \subset \mathcal{P}(\mathbb{N}).$$
Observe that for each $n \in \mathbb{N}$, we have
$$\l(n) = \bigcup_{j:\ i_j \leq n} \{S \subset \mathbb{N}:\ S \cap [i_j] = [i_j] \setminus \{i_k:\ k < j\}\}.$$
For brevity, write $C_j : = \{S \subset \mathbb{N}:\ S \cap [i_j] = [i_j] \setminus \{i_k:\ k < j\}\}$ for each $j$; then $C_j$ is a subcube whose set of fixed coordinates is $[i_j]$, for each $j$, and we have
$$\l = \bigcup_j C_j,\quad \l(n) = \bigcup_{j:\ i_j \leq n}C_j.$$
Hence,
$$\mu_p(\l \setminus \l(n)) = \sum_{j:\ i_j > n} \mu_p(C_j) \leq r^{n+1}+r^{n+2} +\ldots \leq \frac{r^{n+1}}{1-r},$$
since the subcube $C_j$ has $i_j$ fixed coordinates, for all $j$.
 \end{proof}

 It follows from Claim \ref{claim:measures-converge} that
 \begin{equation} \label{eq:limit-measures-2} \mu_{p}\left(\l_{\lambda}\right) = \lim_{n \to \infty} \mu_{p}\left(\l_{\lfloor \lambda 2^n \rfloor/2^n}\right),
 \end{equation}
where we can regard $\l_{\lfloor \lambda 2^n \rfloor/2^n}$ either as a subset of $\p(\mathbb{N})$ (with $\mu_p = \mu_p^{(\mathbb{N})}$) or as a subset of $\p([n])$ (with $\mu_p = \mu_p^{(n)}$, the $p$-biased measure on $\pn$); the two measures coincide on families depending only upon the first $n$ coordinates. (Alternatively, it is easy to deduce (\ref{eq:limit-measures-2}) from (\ref{eq:countable-add}).)

In order to analyse $I^p[\l_{\lambda}]$, we need some further observations. If $\a \subset \p(\mathbb{N})$, we write $\a_i^+ = \{S \setminus \{i\}:\ i \in S,\ S \in \a\} \subset \p(\mathbb{N} \setminus \{i\})$, and we write $\a_i^- = \{S \in \a:\ i \notin S\} \subset \p(\mathbb{N} \setminus \{i\})$. If $i \in \mathbb{N}$, we define the `projected' $\sigma$-algebra
$$\Sigma_i := \{\{S \setminus \{i\}:\ S \in \f\}:\ \f \in \Sigma\} \subset \p(\mathbb{N} \setminus \{i\}),$$
and we equip $(\p(\mathbb{N} \setminus \{i\}),\Sigma_i)$ with the natural product measure $\mu_p^{(\mathbb{N}\setminus \{i\})}$ induced by $\mu_p^{(\mathbb{N})}$, i.e.\ for all $\g \in \Sigma_i$,
$$\mu_p^{(\mathbb{N} \setminus \{i\})}(\g) := \mu_p^{(\mathbb{N})}(\{F \subset \mathbb{N}:\ F \setminus \{i\} \in \g\}).$$
It is easily checked that if $\a\in \Sigma$, then $\a_i^+,\a_i^- \in \Sigma_i$, and if moreover $\a$ is monotone increasing, then
$$I_i^p[\a] = \mu_p^{(\mathbb{N} \setminus \{i\})}(\a_i^+ \setminus \a_i^-) = \mu_p^{(\mathbb{N} \setminus \{i\})}(\a_i^+) - \mu_p^{(\mathbb{N} \setminus \{i\})}(\a_i^-).$$
For brevity, we will write $\mu_p = \mu_p^{(\mathbb{N} \setminus \{i\})}$ when the underlying space $\{0,1\}^{\mathbb{N} \setminus \{i\}}$ is clear from the context.

We can now prove the following.

\begin{claim}
\label{claim:high-coords}
Let $0 < p < 1$, let $0 \leq \lambda \leq 1$ and let $i \in \mathbb{N}$. Then $I_i^{p}[\l] \leq r^{i}/(1-r)^2$.
\end{claim}
\begin{proof}
Since $\l=\l_{\lambda}$ is monotone increasing, we have
$$I_i^p[\l] = \mu_p(\l_i^+ \setminus \l_i^-).$$
If $S \in \l_i^+\setminus \l_i^-$, then $S \cup \{i\} \in \l \setminus \l(i-1)$, since $\l(i-1)$ depends only upon the first $i-1$ coordinates. Since $\mu^{(\mathbb{N})}_p(\{S \cup \{i\}\}) = p \mu_p^{(\mathbb{N} \setminus \{i\})} (\{S\})$ for each such $S$, we have
$$p \mu_p^{(\mathbb{N} \setminus \{i\})}(\l_i^+\setminus \l_i^-) \leq \mu^{(\mathbb{N})}_p(\l \setminus \l(i-1)).$$
By Claim \ref{claim:measures-converge}, we have
$\mu_p(\l \setminus \l(i-1)) \leq r^{i}/(1-r)$, and therefore
$$I_i^{p}[\l] = \mu_p^{(\mathbb{N} \setminus \{i\})}(\l_i^+ \setminus \l_i^-) \leq \frac{\mu_p^{(\mathbb{N})}(\l \setminus \l(i-1))}{p} \leq \frac{r^{i}}{p(1-r)} \leq  \frac{r^{i}}{(1-r)^2},$$
as required.
\end{proof}

It follows from Claim \ref{claim:high-coords} that $I^p[\l_{\lambda}] \leq \sum_{i=1}^{\infty} r^{i}/(1-r)^2 = r/(1-r)^3 < \infty$, for any $p \in (0,1)$ and any $\lambda \in [0,1]$.

\begin{claim}
\label{claim:low-coords}
Let $0 < p < 1$ and let $0 \leq \lambda \leq 1$. Then for each $i \in \mathbb{N}$, we have
$$\left|I_i^{p}[\l] - I_i^p[\l(n)]\right| \leq \frac{r^{n}}{(1-r)^2}.$$
\end{claim}
\begin{proof}
Observe that for any monotone increasing $\a,\b \in \Sigma$ with $\b \subset \a$, and any $i \in \mathbb{N}$, we have
\begin{align*} \left|I_i^p[\a] - I_i^p[\b]\right| & = \left|(\mu_p(\a_i^+) - \mu_p(\a_i^-)) - (\mu_p(\b_i^+)-\mu_p(\b_i^-))\right|\\
& = |(\mu_p(\a_i^+) - \mu_p(\b_i^+)) - (\mu_p(\a_i^-)-\mu_p(\b_i^-))|\\
& \leq \max\{\mu_p(\a_i^+) - \mu_p(\b_i^+), \mu_p(\a_i^-)-\mu_p(\b_i^-)\}\\
& = \max\{\mu_p(\a_i^+ \setminus \b_i^+), \mu_p(\a_i^-\setminus \b_i^-)\}\\
& \leq \frac{\mu_p(\a \setminus \b)}{\min\{p,1-p\}}\\
&= \frac{\mu_p(\a \setminus \b)}{1-r}.
\end{align*}
Applying this with $\a = \l$ and $\b = \l(n)$, and using Claim \ref{claim:measures-converge}, yields
$$\left|I_i^{p}[\l] - I_i^p[\l(n)]\right| \leq \frac{r^{n+1}}{(1-r)^2}\quad \forall i \in \mathbb{N},$$
as required.
\end{proof}

The two claims above yield the following.
\begin{lem}
\label{lem:unif-limit-infs}
$$|I^{p}[\l] - I^p[\l(n)]| \leq \frac{n r^n}{(1-r)^3}.$$
\end{lem}

\begin{proof}
Since $\l(n)$ depends only upon the first $n$ coordinates, we have $I^p_i[\l(n)] = 0$ for all $i > n$. Hence,
\begin{align*}
|I^{p}[\l] - I^p[\l(n)]| & \leq \left|\sum_{i=1}^{n} (I_i^{p}[\l] - I^p_i[\l(n)]) \right| +  \left|\sum_{i=n+1}^{\infty} (I_i^{p}[\l] - I_i^p[\l(n)]) \right|\\
& = \left|\sum_{i=1}^{n} (I_i^{p}[\l] - I_i^p[\l(n)]) \right| +  \sum_{i=n+1}^{\infty} I_i^{p}[\l]\\
& \leq \sum_{i=1}^{n} |I_i^{p}[\l] - I^p_i[\l(n)]| + \sum_{n+1}^{\infty} I_i^{p}[\l]
\leq n \frac{r^{n}}{(1-r)^2} + \sum_{i=n+1}^{\infty} \frac{r^{i}}{(1-r)^2}\\
&= \frac{((1-r)n+r)r^n}{(1-r)^3} \leq \frac{n r^n}{(1-r)^3},
\end{align*}
where the third inequality uses Claim \ref{claim:low-coords} to bound the first sum and Claim \ref{claim:high-coords} to bound the second.
\end{proof}

Lemma \ref{lem:unif-limit-infs} implies that
\begin{equation} \label{eq:limit-influences-2} I^{p}\left[\l_{\lambda}\right]=\lim_{n \to \infty} I^{p}\left[\l_{\lfloor \lambda 2^n \rfloor/2^{n}}\right],\end{equation}
where we can regard $\l_{\lfloor \lambda 2^n \rfloor/2^n}$ either as a subset of $\p(\mathbb{N})$ or as a subset of $\p([n])$; the two relevant notions of influence coincide on families depending only upon the first $n$ coordinates.

Lemma \ref{lem:unif-limit-infs} also implies that the statement of the Margulis-Russo lemma holds for $\l_{\lambda}$:
\begin{lem}
\label{lem:MR-inf}
If $0 < p_0 < 1$ and $0 \leq \lambda \leq 1$, then the function $p \mapsto \mu_p(\l_{\lambda})$ is differentiable at $p_0$, with
$$\left.\frac{d}{dp} \mu_p(\l_{\lambda})\right|_{p=p_0} = I^{p_0}[\l_{\lambda}].$$
\end{lem}
\begin{proof} We may assume that $0 < \lambda < 1$. Fix such a $\lambda$. Define the function $g:(0,1) \to [0,1];\ g(p) = \mu_p(\l)$, and for each $n \in \mathbb{N}$, define a function $g_n:(0,1) \to [0,1];\ g_n(p) = \mu_p(\l(n))$. By (\ref{eq:limit-measures-2}), $g_n(p) \to g(p)$ as $n \to \infty$, for any $p \in (0,1)$. By the Margulis-Russo lemma, $g_n'(p) = I^p[\l(n)]$ for each $n \in \mathbb{N}$, since for each $n \in \mathbb{N}$, the family $\l(n) \subset \p(\mathbb{N})$ can be viewed as a subset of $\pn$, with the respective definitions of total influence coinciding. Moreover, by Lemma \ref{lem:unif-limit-infs}, provided $\eta \leq p \leq 1-\eta$ where $\eta >0$, we have
\begin{equation}\label{eq:lim-inf} |I^p[\l] - g_n'(p)|= |I^p[\l] - I^p[\l(n)]| \leq \frac{n(1-\eta)^n}{\eta^3} \to 0\quad \text{as } n \to \infty,\end{equation}
so $g_n'$ converges uniformly to the function $p \mapsto I^p[\l]$ on the interval $[\eta,1-\eta]$, for any $\eta >0$. It follows from the Differentiable Limit theorem that $g$ is differentiable, and that for any $p_0 \in (0,1)$ we have
$$\left.\frac{d}{dp} \mu_p(\l)\right|_{p=p_0} = g'(p_0) = \lim_{n \to \infty} g_n'(p_0) = \lim_{n \to \infty} I^{p_0}[\l(n)] = I^{p_0}[\l],$$
using (\ref{eq:lim-inf}) again for the last equality. This proves the lemma.
\end{proof}

We also need the following claims.

\begin{claim}
\label{claim:power}
Let $0 < p < 1$ and let $\f \in \Sigma$. Then
$$\mu_p(\f) \leq (\mu_{1/2}(\f))^{\log_{1/2}(r)}.$$
\end{claim}
\begin{proof}
Let $0 <p < 1$. Since the algebra of sets
$$\{\f \times \p(\mathbb{N} \setminus [n]):\ n \in \mathbb{N},\ \f \subset \p([n])\}$$
is dense in the probability space $(\p(\mathbb{N}),\Sigma,\mu_p)$ and in the probability space $(\p(\mathbb{N}),\Sigma,\mu_{1/2})$, it suffices to prove the claim when $\f \subset \p([n])$ for some $n \in \mathbb{N}$.

Let $S \subset [n]$. Then
\begin{align*} \mu_p(\{S\}) = p^{|S|}(1-p)^{n-|S|}
 \leq r^n
 = (2^{-n})^{\log_{1/2}(r)}
 = (\mu_{1/2}(\{S\}))^{\log_{1/2}(r)}.
\end{align*}
Hence, for any $\f \subset \p([n])$, we have
\begin{align*} 
\mu_p(\f) & = \sum_{S \in \f} \mu_p(\{S\})
 \leq \sum_{S \in \f} (\mu_{1/2}(\{S\}))^{\log_{1/2}(r)} \\
&\leq \left( \sum_{S \in \f} \mu_{1/2}(\{S\})\right)^{\log_{1/2}(r)}
 = (\mu_{1/2}(\f))^{\log_{1/2}(r)},
\end{align*}
the last inequality using the fact that $\log_{1/2}(r) \geq 1$.
\end{proof}

\begin{claim}
\label{claim:continuous}
Let $0 < p < 1$. The function $f_p: [0,1] \to [0,1];\ \lambda \mapsto \mu_p(\l_{\lambda})$ is continuous.
\end{claim}
\begin{proof}
Let $0 < p < 1$. Observe that $\mu_{1/2}(\l_{\lambda}) = \lambda$ for all $\lambda \in [0,1]$, and that since the families $\l_{\lambda}$ are nested, $f_p$ is monotone increasing. Let $0 \leq \lambda < \lambda' \leq 1$. The family $\l_{\lambda'}\setminus \l_{\lambda}$ is clearly $\Sigma$-measurable, and we have
\begin{align*} f_p(\lambda')-f_p(\lambda) & = \mu_p(\l_{\lambda'}) - \mu_{p}(\l_{\lambda})
= \mu_p(\l_{\lambda'} \setminus \l_{\lambda})\\
& \leq (\mu_{1/2}(\l_{\lambda'} \setminus \l_{\lambda}))^{\log_{1/2}(r)}
= (\lambda'-\lambda)^{\log_{1/2}(r)}\\
& \to 0 \quad \text{as }\lambda' - \lambda \to 0,
\end{align*}
using Claim \ref{claim:power} for the last inequality. It follows that $f_p$ is continuous, as required.
\end{proof}

We now know that for each $p \in (0,1)$, the function $f_p:\ \lambda \mapsto \mu^{(\mathbb{N})}_p(\l_{\lambda})$ is continuous and monotone increasing, with $f_p(0)=0$ and $f_p(1)=1$. Hence, by the intermediate value theorem, for any $p \in (0,1)$ and any $x \in [0,1]$, there exists $\lambda \in [0,1]$ such that $\mu^{(\mathbb{N})}_p(\l_{\lambda})=x$. In particular, for each $n \in \mathbb{N}$ and each $\f \subset \pn$, there exists $\lambda \in [0,1]$ such that $\mu^{(\mathbb{N})}_p(\l_{\lambda})=\mu^{(n)}_p(\f)$, where $\mu_p^{(n)}$ denotes the $p$-biased measure on $\pn$, i.e.\ there always exists a $\lambda \in [0,1]$ as in the hypothesis of Theorem \ref{thm:Monotone}.

\subsection*{The Kruskal-Katona theorem, and some applications}
In our proof of Theorem \ref{thm:Monotone}, we will also use the well-known Kruskal-Katona theorem \cite{Katona66,Kruskal63}. To state it, we need some more notation. For $k,n \in \mathbb{N} \cup \{0\}$ with $0 \leq k \leq n$, we write $[n]^{(k)} := \{S \subset [n]:\ |S|=k\}$. For a family $\f \subset \p([n])$ and $0 \leq k \leq n$, we write $\f^{(k)}: = \mathcal{F} \cap [n]^{(k)}$. If $k < n$ and $\mathcal{A} \subset [n]^{(k)}$, we write $\partial^{+}(\mathcal{A}) := \{B \in [n]^{(k+1)}:\ A \subset B\ \text{for some }A \in \mathcal{A}\}$ for the {\em upper shadow} of $\mathcal{A}$, and if $1 \leq i \leq n-k$, we write $\partial^{+(i)}(\mathcal{A}) := \{B \in [n]^{(k+i)}:\ A \subset B \text{ for some }A \in \mathcal{A}\}$ for its $i$th iterate. We define the {\em lexicographic ordering on $[n]^{(k)}$} to be the restriction to $[n]^{(k)}$ of the lexicographic ordering on $\pn$, i.e.\ if $S,T \in [n]^{(k)}$, then $S > T$ iff $\min(S \Delta T) \in S$. If $0 \leq m \leq \binom{n}{k}$, we define $\l_{(n,k,m)}$ to be the size-$m$ initial segment of the lexicographic ordering on $[n]^{(k)}$, i.e.\ the $m$ largest elements of $[n]^{(k)}$ with respect to the lexicographic ordering. Clearly, for any $0 \leq m \leq \binom{n}{k}$, we have $\l_{(n,k,m)} = \l \cap [n]^{(k)}$ for some initial segment $\l$ of the lexicographic ordering on $\pn$.

We can now state the Kruskal-Katona theorem.
\begin{thm}[Kruskal-Katona theorem]
Let $1 \leq k < n$, and let $\f \subset [n]^{(k)}$. Then $|\partial^{+}(\f)| \geq |\partial^+(\l_{(n,k,|\f|)}|$.
\end{thm}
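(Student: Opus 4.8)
The Kruskal--Katona theorem is of course classical, so the plan is to recall the standard proof via \emph{compressions} (see \cite{Kruskal63,Katona66}), indicating the main steps. It is convenient first to restate it: since $\l_\lambda$ is an initial segment of the lexicographic order on $\pn$, its trace $\l_\lambda^{(k)}$ on $[n]^{(k)}$ is exactly an initial segment of the linear order that the lexicographic order induces on $[n]^{(k)}$, and so the assertion is that among all subfamilies of $[n]^{(k)}$ of a given size, a lexicographic initial segment minimises the upper shadow. Since the early sets in lexicographic order are those built from small ground-set elements, the relevant compression operators are the `downward' ones: for $1 \le i < j \le n$ and $\f \subset [n]^{(k)}$, define $C_{ij}(\f)$ to be the family obtained from $\f$ by replacing each $S \in \f$ with $j \in S$, $i \notin S$ and $(S \setminus \{j\}) \cup \{i\} \notin \f$ by the set $(S \setminus \{j\}) \cup \{i\}$, and leaving every other member of $\f$ unchanged.

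The first step is the \emph{compression lemma}: $|C_{ij}(\f)| = |\f|$ and $|\partial^{+}(C_{ij}(\f))| \le |\partial^{+}(\f)|$. The equality is immediate from the definition; the shadow inequality follows from a routine (if slightly tedious) case analysis on a $(k+1)$-set $B \in \partial^{+}(C_{ij}(\f))$, the awkward cases being those in which $B$ meets exactly one of $i,j$. Iterating the operators $C_{ij}$ over all pairs $i<j$ --- each application strictly decreasing the statistic $\sum_{S \in \f}\sum_{j \in S} j$ unless $\f$ is already invariant --- we arrive after finitely many steps at a \emph{shifted} family $\g$ with $|\g| = |\f|$, $|\partial^{+}(\g)| \le |\partial^{+}(\f)|$, and the property that $S \in \g$, $j \in S$, $i < j$, $i \notin S$ together imply $(S \setminus \{j\}) \cup \{i\} \in \g$.

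The second step is an induction on $n$ (with a secondary induction on the size of the family) showing that a shifted family has upper shadow at least as large as that of the lexicographic initial segment of the same size. Writing $\g_0 = \{S \in \g : 1 \notin S\}$ and $\g_1 = \{S \setminus \{1\} : 1 \in S \in \g\}$, both viewed as families on the ground set $\{2,\dots,n\}$ (of uniformities $k$ and $k-1$ respectively), one checks directly that $(\partial^{+}\g)_0 = \partial^{+}(\g_0)$ and $(\partial^{+}\g)_1 = \partial^{+}(\g_1) \cup \g_0$, where for a family of $(k+1)$-sets the subscripts $0,1$ denote the same splitting according to whether $1$ is absent or present. Moreover, shiftedness forces $\g_0 \subseteq \partial^{+}(\g_1)$ (apply the defining property with $i=1$, $j = \min S$ to each $S \in \g_0$), and hence $|\partial^{+}\g| = |\partial^{+}(\g_0)| + |\partial^{+}(\g_1)|$. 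Applying the inductive hypothesis to $\g_0$ and to $\g_1$ on the $(n-1)$-element ground set reduces the problem to a one-parameter optimisation over the value of $|\g_1|$, and it remains to show this quantity is minimised when $|\g_1|$ is taken as large as possible, i.e. when $\g$ agrees in structure with the lexicographic initial segment.

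I expect the main obstacle to be exactly this last optimisation: it requires an exchange argument, or a comparison with the $k$-cascade (binomial) representation of $|\f|$, to rule out the `balanced' splittings and pin down that filling $\g_1$ first is optimal; establishing the requisite convexity/monotonicity of the minimal-shadow function is the delicate point. The case analysis in the compression lemma is routine but must be carried out with some care for the sets meeting only one of $\{i,j\}$.
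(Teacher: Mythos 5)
The paper does not prove this statement at all: it is quoted as the classical Kruskal--Katona theorem, with references to Kruskal and Katona, and used as a black box (only Corollary \ref{cor:KK} is actually derived from it). So the only question is whether your sketch is sound. Your setup is correct throughout: the reformulation of $\l_\lambda^{(k)}$ as an initial segment of the induced order on $[n]^{(k)}$, the choice of downward compressions $C_{ij}$ ($i<j$), the termination argument via the potential $\sum_{S}\sum_{j\in S}j$, the identities $(\partial^+\g)_0=\partial^+(\g_0)$ and $(\partial^+\g)_1=\partial^+(\g_1)\cup\g_0$, and the deduction $\g_0\subseteq\partial^+(\g_1)$ from shiftedness are all right, and the compression lemma does hold for the upper shadow (it is the usual shadow-compression lemma applied to the complemented family).

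The gap is the one you flag yourself, and it is not a loose end but the crux: after splitting, you know only $|\g_0|+|\g_1|=|\l_0|+|\l_1|$, and applying the induction hypothesis to $\g_0$ and $\g_1$ separately cannot finish, because the extremal function is monotone in the family size and the two bounds pull in opposite directions as the splitting varies. The standard way to close this (Frankl's short proof) is \emph{not} a convexity argument but a second use of the induction hypothesis to pin down the splitting: one shows that $|\g_1|$ (resp.\ $|\g_0|$) cannot be smaller than its value for the lexicographic segment, since otherwise the containment forced by shiftedness ($\g_0\subseteq\partial^+(\g_1)$ here, $\partial^-(\f_0)\subseteq\f_1$ in the lower-shadow form) together with the induction hypothesis applied to the \emph{other} piece yields a contradiction; one then applies induction to each piece and sums, using Pascal's identity on the cascade (binomial) representation of $|\f|$. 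Without this step the proof is incomplete. A cleaner route for the form stated in the paper is to avoid reproving anything: apply the standard lower-shadow/colex Kruskal--Katona theorem to the complemented family $\{[n]\setminus S: S\in\f\}\subset[n]^{(n-k)}$ and relabel $i\mapsto n+1-i$; this carries colex initial segments of $[n]^{(n-k)}$ exactly to the families $\l_\lambda^{(k)}$ and the lower shadow to the upper shadow, giving the statement verbatim.
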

\noindent We need the following straightforward corollary.
\begin{cor}
\label{cor:KK}Let $n>k_{0}>k\geq j\geq 1$ with $n-k_0 \geq j$, suppose that $\l \subset \pn$ is a lexicographically ordered family depending only upon the coordinates in $[j]$, and let $\f\subset\pn$ be a monotone increasing family with $|\f^{\left(k_{0}\right)}| \leq |\l^{\left(k_{0}\right)}|$. Then $|\f^{\left(k\right)}|\le|\l^{\left(k\right)}|$.
\end{cor}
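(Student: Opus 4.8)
The plan is to argue by contradiction, using the monotonicity of $\f$ to push its $k$-th level up to its $k_{0}$-th level via the iterated Kruskal--Katona theorem, and then using the hypothesis $n-k_{0}\ge j$ to exhibit a single $k_{0}$-set lying in the relevant iterated upper shadow but \emph{not} in $\l_{\lambda}^{(k_{0})}$; this gives the contradiction.

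First I would record the structure of $\l_{\lambda}$. Since it depends only on the coordinates in $[j]$, we may write $\l_{\lambda}=\{S\subset[n]:\ S\cap[j]\in\g\}$ for some $\g\subset\p([j])$; since $\l_{\lambda}$ is monotone increasing (being a lexicographic family, or a limit of such) we get that $\g$ is monotone increasing, and $\g\neq\p([j])$ because $\lambda<1$. Two easy observations: (i) for each $\ell$, the slice $\l_{\lambda}^{(\ell)}$ is an initial segment of the lexicographic order on $[n]^{(\ell)}$ — indeed $S\subset T$ forces $T\ge S$ in the lexicographic order, so any lexicographic initial segment of $\pn$ is monotone increasing and its slice at each level is a lexicographic initial segment there; (ii) since $k\ge j$ one has $\partial^{+(k_{0}-k)}(\l_{\lambda}^{(k)})=\l_{\lambda}^{(k_{0})}$ — the inclusion $\subseteq$ is immediate from monotonicity of $\l_{\lambda}$, while for $\supseteq$, given $B\in\l_{\lambda}^{(k_{0})}$ we have $|B\cap[j]|\le j\le k\le k_{0}$, so we may choose $A$ with $B\cap[j]\subset A\subset B$ and $|A|=k$, whence $A\cap[j]=B\cap[j]\in\g$ and $A\in\l_{\lambda}^{(k)}$.

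Next I would produce the `extra' set. Let $A^{*}$ be the lexicographically largest $k$-set with $A^{*}\cap[j]\notin\g$; this exists because $\g\subsetneq\p([j])$ and because $n-k_{0}\ge j$ together with $k_{0}>k$ gives $n-j>k$, leaving room for the remaining $k-|A^{*}\cap[j]|$ elements. Put $G^{*}:=A^{*}\cap[j]\notin\g$. Then $|[n]\setminus(A^{*}\cup[j])|=n-j-k+|G^{*}|\ge k_{0}-k$, the inequality being exactly $n-k_{0}\ge j-|G^{*}|$, which holds since $n-k_{0}\ge j$. Hence we may pick a $k_{0}$-set $B^{*}$ with $A^{*}\subset B^{*}$ and $B^{*}\setminus A^{*}\subset[n]\setminus[j]$; then $B^{*}\cap[j]=G^{*}\notin\g$, so $B^{*}\notin\l_{\lambda}^{(k_{0})}$.

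Finally, suppose for contradiction that $|\f^{(k)}|>|\l_{\lambda}^{(k)}|$. Let $\mathcal{C}\subset[n]^{(k)}$ be the lexicographic initial segment of size $|\f^{(k)}|$; since $\l_{\lambda}^{(k)}$ is the lexicographic initial segment of smaller size we have $\mathcal{C}\supsetneq\l_{\lambda}^{(k)}$, and as $A^{*}$ is the lexicographically largest $k$-set outside $\l_{\lambda}^{(k)}$, we get $A^{*}\in\mathcal{C}$. Hence $\partial^{+(k_{0}-k)}(\mathcal{C})\supseteq\partial^{+(k_{0}-k)}(\l_{\lambda}^{(k)})\cup\{B^{*}\}=\l_{\lambda}^{(k_{0})}\cup\{B^{*}\}$, so $|\partial^{+(k_{0}-k)}(\mathcal{C})|\ge|\l_{\lambda}^{(k_{0})}|+1$. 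On the other hand, the iterated Kruskal--Katona theorem gives $|\partial^{+(k_{0}-k)}(\f^{(k)})|\ge|\partial^{+(k_{0}-k)}(\mathcal{C})|$, while monotonicity of $\f$ gives $\partial^{+(k_{0}-k)}(\f^{(k)})\subseteq\f^{(k_{0})}$; combining, $|\l_{\lambda}^{(k_{0})}|\ge|\f^{(k_{0})}|\ge|\partial^{+(k_{0}-k)}(\f^{(k)})|\ge|\partial^{+(k_{0}-k)}(\mathcal{C})|\ge|\l_{\lambda}^{(k_{0})}|+1$, a contradiction. I expect the main obstacle to be precisely obtaining this strict inequality: the iterated Kruskal--Katona theorem on its own controls shadow sizes only weakly, and manufacturing the genuine gain of one extra $k_{0}$-set is exactly what the room condition $n-k_{0}\ge j$ buys; the only further points needing care are the standard iterated form of Kruskal--Katona (which rests on the fact that upper shadows of lexicographic initial segments are again lexicographic initial segments) and observation (i) above.
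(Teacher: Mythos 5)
Your proof is correct and is essentially the paper's own argument: both proceed by contradiction, compare $\f^{(k)}$ with the lexicographic initial segment of the same size at level $k$, apply the iterated Kruskal--Katona theorem together with $\partial^{+(k_0-k)}(\f^{(k)})\subseteq\f^{(k_0)}$, and manufacture the extra $k_0$-set by extending a $k$-set outside $\l_\lambda^{(k)}$ using only coordinates outside $[j]$ (which is exactly where $n-k_0\ge j$ and $j\le k$ enter). The only cosmetic difference is that you take the lex-largest such $k$-set $A^*$, whereas the paper takes an arbitrary element of $\l_{\lambda'}^{(k)}\setminus\l_\lambda^{(k)}$.
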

\begin{proof}
Suppose that $|\f^{\left(k_{0}\right)}| \leq |\l^{\left(k_{0}\right)}|$, and assume for a contradiction that $|\f^{\left(k\right)}|\geq |\l^{\left(k\right)}|+1$. Let $\tilde{\l} \subset \pn$ be the minimal lexicographically ordered family such that $|\f^{\left(k\right)}|= |\tilde{\l}^{\left(k\right)}|$; then $\tilde{\l}^{(k)} \setminus \l^{(k)} \neq \emptyset$. Choose $S \in \tilde{\l}^{(k)} \setminus \l^{(k)}$. Since $k_0 \leq n-j$, there exists $S' \supset S$ such that $|S'| = k_0$ and $(S' \setminus S) \cap [j] = \emptyset$, and therefore $S' \in \partial^{+(k_0-k)}(\tilde{\l}^{(k)}) \setminus \l$. Since $j \leq k$ and $\l$ depends only upon the coordinates in $[j]$, we have $\l^{(k_0)} = \partial^{+(k_0-k)}(\l^{(k)}) \subset \partial^{+(k_0-k)}(\tilde{\l}^{(k)})$. It follows that $|\partial^{+(k_0-k)}(\tilde{\l}^{(k)})| >  |\l^{\left(k_0\right)}|$. By repeated application of the Kruskal-Katona theorem, since $|\f^{\left(k\right)}|= |\tilde{\l}^{\left(k\right)}|$ and $\f$ is monotone increasing, we have
$$|\f^{(k_0)}|\geq|\partial^{+(k_0-k)}(\f^{(k)})| \geq |\partial^{+(k_0-k)}(\tilde{\l}^{(k)})| >  |\l^{\left(k_0\right)}|,$$
a contradiction.
\end{proof}

\noindent This implies the following, by a standard application of the method of Dinur-Safra \cite{Dinur-Safra} / Frankl-Tokushige \cite{FT03}, known as `going to infinity and back'. (We present the proof, for completeness.)
\begin{cor}
\label{thm:KK cor}Let $0 < q < p < 1$, let $0 < \lambda < 1$, and let
$\f\subset\pn$ be a monotone increasing family with $\mu_p\left(\f\right)\leq\mu_{p}\left(\l_{\lambda}\right)$.
Then $\mu_{q}\left(\f\right)\le\mu_{q}\left(\l_{\lambda}\right)$.
\end{cor}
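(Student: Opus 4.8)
The plan is to reduce the statement about the $p$- and $q$-biased measures on $\pn$ to a statement about layer-sizes of a blown-up family on a much larger ground set $[N]$, where the measures $\mu_p,\mu_q$ are well-approximated by the uniform measures on a slice $[N]^{(k)}$ with $k \approx pN$ (resp.\ $k \approx qN$), and then invoke Corollary \ref{cor:KK}. Concretely, fix $\f \subset \pn$ monotone increasing with $\mu_p(\f) \le \mu_p(\l_\lambda)$. For a large integer $N$ (divisible by $n$, say), consider the map $\pi_N\colon [N] \to [n]$ grouping the coordinates into $n$ blocks of size $N/n$, and let $\f_N := \{T \subset [N]:\ \pi_N(T) \in \f\}$ be the `blow-up' of $\f$; note $\f_N$ is still monotone increasing, and $\mu_r^{(N)}(\f_N) = \mu_r^{(n)}(\f)$ for every $r \in (0,1)$, and likewise $(\l_\lambda)_N$ — more precisely, a lexicographic family $\l_{\lambda}$ on $[N]$ with the same $\mu_p$-measure, using (\ref{eq:limit-measures-2}) — can be taken to depend on only a bounded number $j=j(\lambda,N)$ of coordinates (in fact we may take $\l_\lambda \subset \mathcal{P}(\mathbb{N})$ and approximate; the point is that $\l_\lambda$ restricted to $[N]$ is essentially an initial segment supported on finitely many coordinates).

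The `going to infinity and back' mechanism then runs as follows. By a Chernoff/concentration bound, for $k$ chosen with $k = \lfloor pN \rfloor$ we have $\mu_p^{(N)}(\g) = (1+o(1))\,\frac{|\g^{(k)}|}{\binom{N}{k}} + o(1)$ for any monotone increasing $\g \subset \mathcal{P}([N])$, as $N \to \infty$ with $p$ fixed: the $p$-biased measure of a monotone set is a weighted average of its slice-densities $|\g^{(m)}|/\binom{N}{m}$ with weights $\Pr[\bin(N,p)=m]$ concentrated on $m = pN \pm O(\sqrt{N})$, and for a monotone increasing family the slice-densities are themselves monotone non-decreasing in $m$ (this is the standard Kruskal-Katona/Lov\'asz-type monotonicity, or can be extracted from the upper shadow being at least as dense), so the average is within $o(1)$ of the density at $m = \lfloor pN\rfloor$. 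Applying this to both $\f_N$ and to $\l_\lambda$ on $[N]$, the hypothesis $\mu_p(\f) \le \mu_p(\l_\lambda)$ gives $|\f_N^{(k)}| \le |\l_{\lambda}^{(k)}| + o\!\left(\binom{N}{k}\right)$; a short argument (perturbing $\lambda$ slightly, or taking $N\to\infty$ along a subsequence and using that the lexicographic densities vary continuously with $\lambda$) upgrades this to the clean inequality $|\f_N^{(k)}| \le |\l_{\lambda}^{(k)}|$ for a suitable lexicographic family on $[N]$ supported on $j$ coordinates, with $N - j \ge k_0 - k$ once $N$ is large.

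Now set $k_0 := \lfloor qN \rfloor$; since $q<p$ we have $k_0 < k$ for $N$ large, so Corollary \ref{cor:KK} applies in the downward direction — wait, Corollary \ref{cor:KK} as stated deduces $|\f^{(k)}| \le |\l_\lambda^{(k)}|$ from $|\f^{(k_0)}| \le |\l_\lambda^{(k_0)}|$ with $k_0 > k$, i.e.\ it passes from a \emph{larger} slice to a smaller one. Here we know the inequality on the slice $k = \lfloor pN\rfloor$ (the larger one, since $p>q$) and want it on the slice $k_0' := \lfloor qN \rfloor$ (the smaller one); this is exactly the hypothesis/conclusion pattern of Corollary \ref{cor:KK} with its `$k_0$' set to our $\lfloor pN\rfloor$ and its `$k$' set to our $\lfloor qN \rfloor$. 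So Corollary \ref{cor:KK} yields $|\f_N^{(\lfloor qN\rfloor)}| \le |\l_{\lambda}^{(\lfloor qN\rfloor)}|$, and translating back via the concentration estimate (this time at $q$) gives $\mu_q^{(N)}(\f_N) \le \mu_q^{(N)}(\l_\lambda) + o(1)$, i.e.\ $\mu_q(\f) \le \mu_q(\l_\lambda) + o(1)$. Letting $N \to \infty$ (and using (\ref{eq:limit-measures-2}) to identify $\lim_N \mu_q^{(N)}(\l_\lambda)$ on $[N]$ with $\mu_q(\l_\lambda)$) eliminates the error term and finishes the proof.

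\medskip

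\noindent\textbf{Main obstacle.} The delicate point is not Kruskal-Katona itself but the two `back-and-forth' approximation steps: one must control the $o(1)$ errors carefully enough to conclude a \emph{strict} (non-approximate) inequality at the end, which is why the standard trick is to prove $\mu_q(\f) \le \mu_q(\l_\lambda) + o(1)$ for all large $N$ and only then take the limit — and to make sure that the lexicographic comparison family on $[N]$ can simultaneously (i) have $p$-measure matching $\mu_p(\l_\lambda)$ up to $o(1)$, (ii) be supported on a bounded number $j$ of coordinates so that the hypothesis $n > k_0 > k \ge j \ge 1$ with $n - k_0 \ge j$ of Corollary \ref{cor:KK} is met for $N$ large, and (iii) satisfy the clean slice inequality rather than merely an approximate one. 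Handling (ii)--(iii) together requires either a small monotone perturbation of $\lambda$ (replacing $\l_\lambda$ by $\l_{\lambda'}$ with $\lambda' \downarrow \lambda$) or a compactness argument along a subsequence of $N$; either way this bookkeeping is the crux, while everything else is routine.
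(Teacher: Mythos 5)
Your overall strategy---passing to a large ground set $[N]$, translating the $p$-biased hypothesis into an inequality between $\lfloor pN\rfloor$-slices, applying Corollary \ref{cor:KK} to move down to the $\lfloor qN\rfloor$-slices, and translating back---is exactly the paper's `going to infinity and back' argument, and you correctly identify both the direction in which Corollary \ref{cor:KK} must be used and the need to replace $\l_\lambda$ by a finitely-supported lexicographic family $\l_{\lambda'}$ with $\lambda'$ slightly above $\lambda$. However, your blow-up is the wrong one. Grouping $[N]$ into $n$ blocks and setting $\f_N=\{T:\pi_N(T)\in\f\}$ does \emph{not} satisfy $\mu_r^{(N)}(\f_N)=\mu_r^{(n)}(\f)$: under $\mu_r^{(N)}$ each block is hit independently with probability $1-(1-r)^{N/n}$, so $\mu_r^{(N)}(\f_N)=\mu^{(n)}_{1-(1-r)^{N/n}}(\f)\to 1$ as $N\to\infty$ for any nonempty monotone $\f$, and likewise the slice densities $|\f_N^{(\lfloor rN\rfloor)}|/\binom{N}{\lfloor rN\rfloor}$ tend to $1$ rather than to $\mu_r(\f)$. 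The construction the argument needs is the dummy-coordinate blow-up $\f_N:=\{A\subset[N]:\ A\cap[n]\in\f\}$ (as in the paper): a uniform $\lfloor pN\rfloor$-subset of $[N]$ meets $[n]$ in an asymptotically $\mu_p$-distributed set, whence $|\f_N^{(\lfloor pN\rfloor)}|/\binom{N}{\lfloor pN\rfloor}\to\mu_p(\f)$.

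A second step that fails as stated is your concentration claim: it is not true that $\mu_p^{(N)}(\g)$ is within $o(1)$ of $|\g^{(\lfloor pN\rfloor)}|/\binom{N}{\lfloor pN\rfloor}$ for an \emph{arbitrary} monotone increasing $\g\subset\mathcal{P}([N])$. For $\g=\{A:|A|\ge pN\}$ the slice density jumps from $0$ to $1$ across a window of width $O(\sqrt N)$ around $pN$, so $\mu_p^{(N)}(\g)\approx\tfrac12$ while the density at $\lfloor pN\rfloor$ is $0$ or $1$; monotonicity of the slice densities plus concentration of $\mathrm{Bin}(N,p)$ only sandwiches $\mu_p^{(N)}(\g)$ between densities at $pN\pm O(\sqrt N)$. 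What rescues the argument is that every family you need this for is a junta (a blow-up of a family on the fixed ground set $[n]$, or on $[m]$ for the truncated lexicographic family), and for juntas the hypergeometric-to-product convergence gives the limit directly, with no appeal to monotonicity. With these two repairs your outline becomes essentially the paper's proof; the paper additionally packages the endgame as a contradiction (assume $\mu_q(\f)>\mu_q(\l_\lambda)$, choose $\lambda'>\lambda$ with $\mu_q(\f)>\mu_q(\l_{\lambda'})$, truncate $\l_{\lambda'}$ to $\mathcal{P}([m])$, and run Corollary \ref{cor:KK} in contrapositive from the $q$-slice up to the $p$-slice), which makes all the approximation errors point the right way and avoids the final limiting step $\lambda'\downarrow\lambda$ that you correctly flag as the delicate point of the direct version.
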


\begin{proof}
Let $\f\subset\pn$ be a monotone increasing family with $\mu_p\left(\f\right)\leq\mu_{p}\left(\l_{\lambda}\right)$, and suppose for a contradiction that $\mu_{q}\left(\f\right) > \mu_{q}\left(\l_{\lambda}\right)$. By Claim \ref{claim:continuous}, there exists $\lambda'> \lambda$ such that $\mu_{q}\left(\f\right) >\mu_{q}\left(\l_{\lambda'}\right)$. By (\ref{eq:limit-measures-2}), there exists $m\geq n$ such that
$$\mu_p^{(m)}(\l_{\lambda'}\cap \mathcal{P}([m])) > \mu_{p}\left(\l_{\lambda}\right),\quad \mu_q^{(m)}(\l_{\lambda'}\cap \mathcal{P}([m])) > \mu_{q}\left(\l_{\lambda}\right).$$
Define $\l' = \l_{\lambda'}\cap \mathcal{P}([m]) \subset \mathcal{P}([m])$; then
$$\mu_p(\l') > \mu_{p}\left(\l_{\lambda}\right),\quad \mu_q(\l') > \mu_{q}\left(\l_{\lambda}\right).$$
 Now, for any family $\mathcal{G}\subset \mathcal{P}\left(\left[n\right]\right)$
and any $N\in \mathbb{N}$ with $N \geq n$, we define
$$\mathcal{G}_{N} : = \{A \subset [N]:\ A \cap [n] \in \mathcal{G}\}.$$
It is easily checked that for any $\mathcal{G} \subset \mathcal{P}([n])$ and any $p \in (0,1)$, we have
$$\mu_p(\mathcal{G}) = \lim_{N \to \infty} \frac{|(\mathcal{G}_{N})^{(\lfloor pN \rfloor)}|}{{N \choose \lfloor pN \rfloor}}.$$
In particular, we have
$$\mu_q(\mathcal{F}) = \lim_{N \to \infty} \frac{|(\mathcal{F}_{N})^{(\lfloor qN \rfloor)}|}{{N \choose \lfloor qN \rfloor}}$$
and
$$\mu_q(\l') = \lim_{N \to \infty} \frac{|(\mathcal{L}'_{N})^{(\lfloor qN \rfloor)}|}{{N \choose \lfloor qN \rfloor}}.$$
Since $\mu_q(\f) > \mu_q(\l_{\lambda'}) \geq \mu_q(\l')$, for all $N$ sufficiently large (depending on $q$ and $m$), we have
$$|(\mathcal{F}_{N})^{(\lfloor qN \rfloor)}| > |(\mathcal{L}'_{N})^{(\lfloor qN \rfloor)}|.$$
Since $\mathcal{L}'_{N}$ depends only upon the coordinates in $[m]$, and is a lexicographic family, it follows from Corollary \ref{cor:KK} that if $N$ is sufficiently large depending on $p,q$ and $m$, then
$$|(\mathcal{F}_{N})^{(\lfloor pN \rfloor)}| > |(\mathcal{L}'_{N})^{(\lfloor pN \rfloor)}|.$$
Since
$$\mu_p(\mathcal{F}) = \lim_{N \to \infty} \frac{|(\mathcal{F}_{N})^{(\lfloor pN \rfloor)}|}{{N \choose \lfloor pN \rfloor}}$$
and
$$\mu_p(\l') = \lim_{N \to \infty} \frac{|(\mathcal{L}'_{N})^{(\lfloor pN \rfloor)}|}{{N \choose \lfloor pN \rfloor}},$$
it follows that $\mu_p(\mathcal{F}) \geq \mu_p(\mathcal{L}') > \mu_p(\mathcal{L}_{\lambda})$, a contradiction.
\end{proof}

\medskip

\noindent Now we are ready to prove Theorem~\ref{thm:Monotone}.

\begin{proof}[Proof of Theorem~\ref{thm:Monotone}]
Let $\f$ be a family that satisfies the assumptions of the theorem. Note that by Lemmas \ref{lem:MR} and \ref{lem:MR-inf}, for any $p_0 \in (0,1)$, we have $\frac{d}{dp}\mu_{p}\left(\l_{\lambda}\right)|_{p=p_0}=I^{p_0}\left[\l_{\lambda}\right]$ and
$\frac{d}{dp}\mu_{p}(\f)|_{p=p_0}=I^{p_0}[\f]$. By Corollary \ref{thm:KK cor}, $\mu_{q}\left(\f\right)\le\mu_{q}\left(\l_{\lambda}\right)$
for any $q\le p$. Therefore,
\[
I^{p}\left[\f\right]=\lim_{q\to p}\frac{\mu_{p}\left(\f\right)-\mu_{q}\left(\f\right)}{p-q}\ge\lim_{q\to p}\frac{\mu_{p}\left(\l_{\lambda}\right)-\mu_{q}\left(\l_{\lambda}\right)}{p-q}=I^{p}\left[\l_{\lambda}\right],
\]
as desired.
\end{proof}

\subsection*{The deduction of Theorem~\ref{thm:edge-iso} from Theorem \ref{thm:Monotone}}
This is a standard (and short) `monotonization' argument. We include it for completeness.

For $i \in [n]$, the {\em $i$th monotonization operator} $\m_i:\pn \to \pn$ is defined as follows. (See e.g. \cite{KKL}.) If $\f \subset \pn$, then for each $S \in \f$ we define
$$\m_i(S) = \begin{cases} S \cup \{i\} & \text{if } S \in \mathcal{F},\ i \notin S \text{ and } S \cup \{i\} \notin \f,\\
S & \text{otherwise},\end{cases}$$
and we define $\m_i(\f) = \{\m_i(S):\ S \in \f\}$. It is well-known, and easy to check, that for any $\f \subset \pn$, we have $|\m_i(\f)| = |\f|$ and
$$I^{1/2}_{j}\left[\m_{i}\left(\f\right)\right]\le I^{1/2}_{j}\left[\f\right]\quad \forall j \in [n];$$
summing over all $j$ we obtain
$$I^{1/2}\left[\m_{i}\left(\f\right)\right]\le I^{1/2}\left[\f\right].$$
Observe that the $\m_{i}$'s transform a family to a monotone increasing one, in the sense that for any $\f \subset \pn$, the family $\g:=\m_{1}\circ\cdots\circ\m_{n}\left(\f\right)$ is monotone increasing; note also that $|\g| = |\f|$ and $I^{1/2}[\g]\le I^{1/2}[\f]$.

Now let $\f \subset \pn$, and let $\l_{\lambda} \subset \pn$ be a lexicographic family with $|\l_{\lambda}|=|\f|$. Let $\g = \m_{1}\circ\cdots\circ\m_{n}\left(\f\right)$; then $|\g|=|\f|$, $I^{1/2}[\g] \leq I^{1/2}[\f]$, and $\g$ is monotone increasing. By Theorem~\ref{thm:Monotone}, we have $I^{1/2}[\g] \geq I^{1/2}[\l_{\lambda}]$, and therefore $I^{1/2}[\f] \geq I^{1/2}[\g] \geq I^{1/2}[\l_{\lambda}]$, proving Theorem \ref{thm:edge-iso}.

\begin{remark}
\label{remark:anti}
We observe that the statement of Theorem \ref{thm:edge-iso} does not hold for arbitary (i.e., non-monotone) families $\f$, if $p \neq 1/2$. Indeed, let $\f = \{S \subset [n]:\ 1 \notin S\}$, and let $p \in (0,1) \setminus \{1/2\}$; then $\mu_p(\f) = 1-p$ and $I^p[\f] = 1$. Since the function $f_p: \lambda \mapsto \mu_p(\l_{\lambda})$ is continuous (by Claim \ref{claim:continuous}) with $f_p(0)=0$ and $f_p(1)=1$, there exists $\lambda \in (0,1)$ such that $\mu_p(\l_{\lambda}) = 1-p$. Write $\l = \l_{\lambda}$, and as before, for each $n \in \mathbb{N}$, write $\l(n) = \l_{\lfloor \lambda 2^n \rfloor / 2^n}$. Then we may view $\l(n)$ as a subset of $\p([n])$, for each $n \in \mathbb{N}$. We have $\mu_p(\l(n)) \to \mu_p(\l) = 1-p$ as $n \to \infty$, by (\ref{eq:limit-measures-2}).

First suppose that $1/2 < p < 1$. By Theorem \ref{thm:edge-iso-biased}, and since $\l(n)$ is monotone increasing with $\mu_p(\l(n)) \to 1-p$ as $n \to \infty$, we have
$$p I^p[\l(n)] \geq \mu_p(\l(n)) \log_p(\mu_p(\l(n))) \to (1-p) \log_p(1-p)\quad \text{as } n \to \infty.$$
It follows from (\ref{eq:limit-influences-2}) that
$$p I^p[\l] \geq  (1-p) \log_p(1-p) > p,$$
the last inequality using Claim \ref{claim on alpha(p)} and the fact that $p > 1/2$. Hence, $I^p[\l] > 1 = I^p[\f]$.

Now suppose that $0 < p < 1/2$. Note that $\l(n)^* \subset \pn$ is monotone increasing with $\mu_{1-p}(\l(n)^*) = 1-\mu_p(\l(n))$ and $I^{1-p}[\l(n)^*] = I^p[\l(n)]$. By Theorem \ref{thm:edge-iso-biased}, and since $\l(n)^*$ is monotone increasing, we have
\begin{align*} (1-p) I^{p}[\l(n)] = (1-p)I^{1-p}[\l(n)^*] & \geq \mu_{1-p}(\l(n)^*) \log_{1-p}(\mu_{1-p}(\l(n)^*))\\
& \to p \log_{1-p}(p)
\end{align*}
as $n \to \infty$, since $\mu_{1-p}(\l(n)^*) = 1-\mu_p(\l(n)) \to p$ as $n \to \infty$. It follows from (\ref{eq:limit-influences-2}) that
$$(1-p) I^p[\l] \geq  p \log_{1-p}(p) > 1-p,$$
the last inequality using Claim \ref{claim on alpha(p)} and the fact that $p < 1/2$. Hence, $I^p[\l] > 1 = I^p[\f]$.
\end{remark}
\section{Open Problems}
\label{sec:open}

A natural open problem is to obtain a $p$-biased edge-isoperimetric inequality for arbitrary (i.e., not necessarily monotone increasing) families, which is sharp for all values of the $p$-biased measure. This is likely to be difficult, as there is no nested sequence of extremal families. Indeed, it is easily checked that if $p < 1/2$, the unique families $\mathcal{F} \subset \pn$ with $\mu_p(\f) = p$ and minimal $I_p[\f]$ are the dictatorships, whereas the unique families $\g \subset \pn$ with $\mu_p(\g) = 1-p$ and minimal $I_p[\g]$ are the antidictatorships; clearly, none of the former are contained in any of the latter.

Another natural problem is to obtain a sharp stability version of our `full' biased edge isoperimetric inequality for monotone increasing families (i.e.,
Theorem~\ref{thm:Monotone}). This would generalise (the monotone case of) Theorem \ref{thm:full-stability}, our sharp stability version of the `full' edge isoperimetric inequality. It seems likely that the proof in \cite{LOL} can be extended to the biased case using the methods of the current paper, but the resulting proof is expected to be rather long and complex.

Finally, it is highly likely that the values of the absolute constants in Theorem \ref{thm:skewed-iso-stability}, and of the constants depending upon $\eta$ in Theorem \ref{thm:mon-iso-stability}, could be substantially improved. Note for example that Theorem \ref{thm:skewed-iso-stability} applies only to Boolean functions whose total influence is very close to the minimum possible, namely, for $pI^{p}[f]\leq\mu_{p}[f]\left(\log_{p}(\mu_{p}[f])+\epsilon\right)$, where $\epsilon\leq c_{0}/\ln(1/p)$ and $c_0$ is very small. It is likely that the conclusion holds under the weaker assumption $\epsilon < 1/\ln(1/p)$. Such an extension is not known even for the uniform measure. (See, for example, the conjectures in \cite{Ellis}.)

\end{document}